\numberwithin{equation}{section}
\numberwithin{figure}{section}
\newtheorem {theorem}{Theorem}[section]
\newtheorem {proposition}[theorem]{Proposition}
\newtheorem {lemma}[theorem]{Lemma}
\newtheorem {corollary}[theorem]{Corollary}
\theoremstyle{definition}
\newtheorem {remark}[theorem]{Remark}
\theoremstyle{theorem}
\newcommand{\Vol}{\operatorname{Vol}}
\def\ba{\begin{array}}
\def\ea{\end{array}}
\def\bea{\begin{eqnarray} \label}
\def\eea{\end{eqnarray}}
\def\be{\begin{equation} \label}
\def\ee{\end{equation}}
\def\bit{\begin{itemize}}
\def\eit{\end{itemize}}
\def\ben{\begin{enumerate}}
\def\een{\end{enumerate}}
\def\BB{\mathbb{B}}
\def\EE{\mathbb{E}}
\def\NN{\mathbb{N}}
\def\PP{\mathbb{P}}
\def\RR{\mathbb{R}}
\def\RRd1{\mathbb{R}^{d+1}}
\def\SS{\mathbb{S}}
\def\SSd{\mathbb{S}^d}
\def\b{\beta}
\def\g{\gamma}
\def\k{\kappa}
\def\l{\lambda}
\def\o{\omega}
\def\G{\Gamma}
\def\mw{\mathscr{W}}
\def\dint{\textup{d}}
\newcommand{\eee}{{\rm e}}
\newcommand{\tobeta}{\overset{}{\underset{\beta\to+\infty}\longrightarrow}}
\newcommand{\ind}{\mathbbm{1}}
\begin{document}

\title{\bfseries Expected intrinsic volumes and facet numbers\\ of random beta-polytopes}

\author{Zakhar Kabluchko, Daniel Temesvari and Christoph Th\"ale}

\date{}

\maketitle

\begin{abstract}
Let $X_1,\ldots,X_n$ be i.i.d.\ random points in the $d$-dimensional Euclidean space sampled according to one of the following probability densities:
$$
f_{d,\beta} (x) = \text{const} \cdot (1-\|x\|^2)^{\beta}, \quad \|x\| < 1, \quad \text{(the beta case)}
$$
and
$$
\tilde f_{d,\beta} (x) = \text{const} \cdot (1+\|x\|^2)^{-\beta}, \quad x\in\RR^d, \quad \text{(the beta' case).}
$$
We compute exactly the expected intrinsic volumes and  the expected number of facets of the convex hull of $X_1,\ldots,X_n$. Asymptotic formulae were obtained previously by Affentranger [The convex hull of random points with spherically symmetric distributions, 1991].
By studying the limits of the beta case when $\beta\downarrow -1$, respectively $\beta \uparrow +\infty$, we can also cover the models in which  $X_1,\ldots,X_n$ are uniformly distributed on the unit sphere or normally distributed, respectively.
 We obtain similar results for the random polytopes defined as the convex hulls of $\pm X_1,\ldots,\pm X_n$ and $0,X_1,\ldots,X_n$. One of the main tools used in the proofs is the Blaschke-Petkantschin formula.
\noindent
\bigskip
\\
{\bf Keywords}. Beta distribution, beta' distribution, Blaschke-Petkantschin formula, convex hull, expected volume, intrinsic volumes, number of facets, random polytope, symmetric convex hull.\\
{\bf MSC 2010}. 	52A22, 52B11, 53C65, 60D05.
\end{abstract}

\tableofcontents

\section{Introduction}
This paper is concerned with a particular class of random polytopes in the $d$-dimensional Euclidean space $\RR^d$.
Let $X_1,\ldots,X_n$ be independent and identically distributed (i.i.d.)\ points in $\RR^d$ chosen according to one of the following probability densities:
 \begin{itemize}
 	\item[(a)] the beta-distribution: $f_{d,\b}(x)=c_{d,b} \left( 1-\left\| x \right\|^2 \right)^\b$, $\|x\| <  1$, where $\b>-1$,
 	\item[(b)] the beta$^\prime$-distribution: $\tilde{f}_{d,\b}(x)=\tilde{c}_{d,\b} \left( 1+\left\| x \right\|^2 \right)^{-\b}$, $x\in\RR^d$, where $\b>\frac{d}{2}$.
 \end{itemize}
The normalizing constants $c_{d,\b}$ and $\tilde{c}_{d,\b}$ of the densities $f_{d,\b}$ and $\tilde{f}_{d,\b}$, respectively, are given by
\begin{equation}\label{eq:c_d_beta}
c_{d,\b}= \frac{ \G\left( \frac{d}{2} + \b + 1 \right) }{ \pi^{ \frac{d}{2} } \G\left( \b+1 \right) }
\;\;\; \text{and} \;\;\;
\tilde{c}_{d,\b}= \frac{ \G\left( \b \right) }{\pi^{ \frac{d}{2} } \G\left( \b - \frac{d}{2} \right) }.
\end{equation}
Moreover, $\|x\| = (x_1^2+\ldots+x_d^2)^{1/2}$ denotes the Euclidean norm of the vector $x= (x_1,\ldots,x_d)\in\RR^d$.

We will be interested in various kinds of random polytopes generated by these points. Let us denote in this paper by $[a_1,\ldots,a_n]$ the convex hull of the points $a_1,\ldots, a_n\in\RR^d$. Assuming that $X_1,\ldots,X_n$ have the beta distribution with parameter $\beta$, we define the following random polytopes:
$$
P_{n,d}^\beta := [X_1,\ldots,X_n],
\qquad
S_{n,d}^\beta := [\pm X_1,\ldots,\pm X_n],
\qquad
Q_{n,d}^\beta := [0,X_1,\ldots,X_n].
$$
Analogously, $\tilde{P}_{n,d}^\beta$, $\tilde{S}_{n,d}^\beta$ and $\tilde{Q}_{n,d}^\beta$ denote the similarly generated polytopes from $n$ independent and identically beta$^\prime$-distributed points in $\RR^d$ with parameter $\b$.

The main aim of the present work is to compute {\it exactly} the expected  volumes of the random polytopes $P_{n,d}^\beta, S_{n,d}^\beta, Q_{n,d}^\beta, \tilde{P}_{n,d}^\beta,\tilde{S}_{n,d}^\beta, \tilde{Q}_{n,d}^\beta$. Besides, we shall compute the expectations of some other functionals of these polytopes as well.  We denote the intrinsic volumes of a convex body $K\subset \RR^d$ by $V_k(K)$, $k=0,\dots,d$. Note that $V_d(K)=\Vol_d(K)$ is the $d$-dimensional Lebesgue volume, $V_{d-1}(K)$ is half the surface area, $V_1(K)$ is a constant multiple of the mean width and $V_0(K)=1$ is the Euler characteristic of the convex body $K$.  The $k$-dimensional Lebesgue volume of a $k$-dimensional set  $K\subset \RR^d$, where $k\leq d$, will be denoted by $\Vol_k(K)$.
Theorems~\ref{t2}, \ref{t2_prime}, \ref{t1}, \ref{t1_prime} and Proposition~\ref{prop:expected_intrinsic} below state  exact expressions for the expected values of the following functionals of the above polytopes:
\begin{itemize}
\item[(a)] the intrinsic volumes $V_k$ (including the volume, the surface area and the mean width),
\item[(b)] the $T$-functionals (introduced by Wieacker~\cite{Wieacker})
$$
T_{a,b}^{d,k}(P)= \sum_{F \in \mathcal{F}_k(P)} \eta^{a}(F) \Vol_k^b(F),
$$
where $a, b \geq 0$ are parameters, $\mathcal{F}_k(P)$ is the set of all $k$-dimensional faces of a convex polytope $P$ for $k=0,\dots,d$, and  $\eta(F)$ is the Euclidean distance from the affine hull of the $k$-dimensional face $F$ to the origin.
\end{itemize}

There exists a large literature about random polytopes. The rapid development started with the influential papers of Efron \cite{Efron65} and R\'enyi and Sulanke \cite{RenyiSulanke} as well as with the thesis of Wieacker \cite{Wieacker}, where mainly random polytopes $K_n$ generated by $n\geq d+1$ independent and uniformly distributed points in a prescribed convex body $K\subset\RR^d$ have been studied. The particular interest was in the goodness of the approximation of $K$ by $K_n$, which is measured, for example, by the expected intrinsic volume difference $V_k(K)-\EE V_k(K_n)$, $k=1,\ldots,d$. More recently, also higher order moments and central limit theorems for various geometric quantities of random polytopes have been investigated intensively. For selected examples we refer to the works of B\'ar\'any, Fodor and Vigh \cite{BaranyFodorVigh}, B\'ar\'any and Vu \cite{BaranyVu} and Reitzner \cite{ReitznerCombStructure05,ReitznerCLT05}, and to the survey articles of B\'ar\'any \cite{BaranySurvey}, Hug \cite{HugSurvey}, Reitzner \cite{ReitznerSurvey} and Schneider \cite{SchneiderSurvey} as well as the references cited therein.

More closely related to the content of the present paper are the works of Aldous, Fristedt, Griffin and Pruitt \cite{AldousEtAl}, Affentranger \cite{Affentranger88,Affentranger91}, Buchta and M\"uller \cite{BuchtaMueller}, Buchta, M\"uller and Tichy \cite{BuchtaMuellerTichy}, Carnal \cite{Carnal}, Dwyer \cite{Dwyer}, Eddy and Gale \cite{EddyGale} as well as of Raynaud \cite{Raynaud}, where random polytopes generated by random points with spherically symmetric distributions in $\RR^d$ were investigated, especially for the beta- and in some cases also for the beta'-distribution. The asymptotic behavior of the expected number of facets and (several special cases of) the expected $T$-functionals $T_{a,b}^{d,k}$ of the polytopes $P_{n,d}^\beta$ has been determined, as $n\to\infty$ (in some cases restricting to the planar case $d=2$). A similar analysis for the Gaussian random polytope is the content of the works of Affentranger and Schneider~\cite{AffentrangerSchneider92} and Hug, Munsonius and Reitzner \cite{HugMunsoniusReitzner}; see also~\cite{KabluchkoZaporozhets}. It is one of the goals of the present paper to unify these approaches, to extend them to other random polytope models (such as symmetric random polytopes) and to develop systematically in each case {\em explicit} integral formulae for the geometric functionals described above. We shall also discuss the special cases $d=2$ and $d=3$ separately, where in some cases our formulae coincide with results known from the existing literature and in other cases they lead to formulae that were not available before. In this context we especially refer to the early results of Buchta \cite{BuchtaEllipsoid}, Buchta, M\"uller and Tichy \cite{BuchtaMuellerTichy} and that of Efron \cite{Efron65}.

In the following, in all our formulae we implicitly assume that $n\geq d+1$.

\medbreak

The rest of this paper is structured as follows. In Section \ref{sec:MainResults} we present the main results of the present work. Moreover, in Section \ref{sec:SpecialCases} we collect a number of special cases of our results, in particular, for random polytopes generated by random points uniformly distributed in the $d$-dimensional unit ball and on the $(d-1)$-dimensional unit sphere. We also discuss there special formulae for $d=2$ and $d=3$. Some auxiliary results needed in our proofs are collected in Section \ref{sec:AuxResults}, whereas Sections \ref{sec:Proof1} and \ref{sec:proof_exp_volumes} contain all proofs.

\begin{remark}
On our web-pages we provide two Mathematica worksheets (one for each distribution), which allow the reader to compute specific values for
the expected volume, surface area, mean width and facet number of $P_{n,d}^\beta, S_{n,d}^\beta, Q_{n,d}^\beta, \tilde{P}_{n,d}^\beta,\tilde{S}_{n,d}^\beta$ and $\tilde{Q}_{n,d}^\beta$ for any fixed dimension, any given number of points and any parameter $\beta$ of the distribution. They also allow to compute explicitly the expected intrinsic volumes
and expected value of the $T$-functional. Some particular values for dimensions $2$ and $3$ are collected in the appendix.
\end{remark}

\section{Main results}\label{sec:MainResults}

\subsection{Expected volumes and intrinsic volumes}

In the one-dimensional case, the cumulative distribution functions of the probability distributions associated with the densities $f_{1,\b}$ in the beta- and $\tilde{f}_{1,\b}$ in beta'-case, respectively, are denoted by
\begin{align}
F_{1,\b}(h)
&=
c_{1,\b} \int\limits_{-1}^h  \left( 1- x^2 \right)^\b \,\dint x,
\quad h\in [-1,1], \label{eq:def_F_d_beta}\\
\tilde{F}_{1,\b}(h)
&=
\tilde c_{1,\b} \int\limits_{-\infty}^h  \left( 1+ x^2 \right)^{-\b} \,\dint x,
\quad h\in\RR. \label{eq:def_F_d_beta_prime}
\end{align}

For all $k\in \NN$, the $k$-dimensional volume $\kappa_k$ of the $k$-dimensional unit ball $\BB^k$ and the surface area $\o_k$ of the unit sphere $\SS^{k-1} = \partial \BB^k$ are given by
$$
\k_k=\Vol_k(\BB^k) = \frac{\pi^{k/2}}{\Gamma(\frac k2 +1)},
\;\;\;
\o_k = k \kappa_k = \frac{2\pi^{k/2}}{\Gamma(\frac k2)}. 
$$

\begin{theorem}\label{t2}
	Let $X_1,\dots,X_n$ be independent beta-distributed random points in $\BB^d$ with parameter $\b>-1$. Then
	\begin{align}
	\EE \Vol_d(P_{n,d}^{\beta})
	&=
	A_{n,d}^\b \int\limits_{-1}^{1}  \left(1-h^2\right)^{q} F_{1,\b+\frac{d-1}{2}}(h)^{n-d-1} \,\dint h,\label{eq:E_Vol_d_P}\\
	\EE \Vol_d(S_{n,d}^{\beta})
	&=
	2^{d+1} A_{n,d}^\b \int\limits_{0}^{1}  \left(1-h^2\right)^{q} \left(F_{1,\b+\frac{d-1}{2}}(h)-F_{1,\b+\frac{d-1}{2}}(-h)\right)^{n-d-1} \,\dint h,\nonumber \\
	\EE \Vol_d(Q_{n,d}^{\beta})
	&=
	D_{n,d}^\b + A_{n,d}^\b \int\limits_{0}^{1}  \left(1-h^2\right)^{q} F_{1,\b+\frac{d-1}{2}}(h)^{n-d-1}\, \dint h,\nonumber
	\end{align}
	where $q= (d+1)\left(\b-\frac 12\right) + \frac{d}{2}(d+3)$ and
	\begin{align}
	A_{n,d}^\b
	&=
	\frac {(d+1) \kappa_d}{2^d \pi^{\frac{d+1}{2}}} \binom {n}{d+1} \left(\beta + \frac {d+1}2\right)  \left(\frac{\Gamma\left(\frac{d+2}{2} + \b \right)}{\Gamma\left(\frac {d+3} 2 + \b\right)} \right)^{d+1},\label{eq:A_n_d_beta}\\
	D_{n,d}^\b
	&=\frac {\kappa_d}{2^n \pi^{\frac{d}{2}}} \binom {n}{d} \left(\frac{\Gamma\left(\frac{d+2}{2} + \b \right)}{\Gamma\left(\frac {d+3} 2 + \b\right)} \right)^{d}.\nonumber
	\end{align}
\end{theorem}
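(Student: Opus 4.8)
The plan is to collapse the expected volume to the stated one--dimensional integral by combining a cone (divergence--type) decomposition of the volume with the affine Blaschke--Petkantschin formula. I will carry this out for $P_{n,d}^\beta$ and then indicate the changes needed for $S_{n,d}^\beta$ and $Q_{n,d}^\beta$.

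First I would write the volume as a signed facet sum. By the divergence theorem applied to the vector field $x\mapsto x$, for any polytope $P$ with the origin off every facet hyperplane,
\[
\Vol_d(P)=\frac1d\sum_{F\in\cF_{d-1}(P)}\eta_\pm(F)\,\Vol_{d-1}(F),
\]
where $\eta_\pm(F)$ is the \emph{signed} distance from the origin to $\operatorname{aff}(F)$ (positive when the origin lies on the inner side). Almost surely $P=P_{n,d}^\beta$ is simplicial, so each facet equals $[X_{i_1},\dots,X_{i_d}]$ and ``$F$ is a facet'' means that the remaining $n-d$ points lie on one side of $\operatorname{aff}(F)$. Taking expectations and using exchangeability gives
\[
\EE\Vol_d(P)=\tfrac1d\binom nd\,\EE\Big[\eta_\pm(F_0)\,\Vol_{d-1}(F_0)\,\ind\{X_{d+1},\dots,X_n\text{ on the inner side of }\operatorname{aff}(F_0)\}\Big],\quad F_0=[X_1,\dots,X_d].
\]

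Next I would apply the affine Blaschke--Petkantschin formula to $X_1,\dots,X_d$, rewriting their joint integral as an integral over hyperplanes $H=H(u,h)$ (unit normal $u\in\SS^{d-1}$, signed distance $h$) followed by an integral over the positions of the points inside $H$, the Jacobian being a power of $\Vol_{d-1}([X_1,\dots,X_d])$; together with the weight $\Vol_{d-1}(F_0)$ this leaves the second moment $\int_{(\BB^{d-1})^d}\Vol_{d-1}^2([z_1,\dots,z_d])\prod_j(1-\|z_j\|^2)^\beta\,\mathrm dz_j$ of a beta simplex volume. Writing a point of $H$ as $hu+y$ with $y\in u^\perp$, the density factorizes as $(1-h^2-\|y\|^2)^\beta$, so after the substitution $y=\sqrt{1-h^2}\,z$ every in--slice factor sheds a power of $(1-h^2)$; in particular the marginal ``height'' law is proportional to $(1-h^2)^{\beta+\frac{d-1}2}$, which is what produces the distribution function $F_{1,\beta+\frac{d-1}2}$ and turns the inner--side condition into $F_{1,\beta+\frac{d-1}2}(h)^{\,n-d}$. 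Rotational symmetry makes the $u$--integral contribute the constant $\omega_d$, and one is left with a single integral in $h\in(-1,1)$ whose integrand is $h\,(1-h^2)^{q'}F_{1,\beta+\frac{d-1}2}(h)^{\,n-d}$ times an explicit constant.

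Finally I would integrate by parts in $h$. Since $\frac{\mathrm d}{\mathrm dh}F_{1,\beta+\frac{d-1}2}(h)^{\,n-d}\propto(n-d)(1-h^2)^{\beta+\frac{d-1}2}F_{1,\beta+\frac{d-1}2}(h)^{\,n-d-1}$ and the boundary terms vanish at $h=\pm1$, the factor $h$ is absorbed, the exponent of $(1-h^2)$ rises from $q'$ to $q=q'+1+\beta+\frac{d-1}2=(d+1)\beta+\frac{d^2+2d-1}2$ (the value in the statement), and the prefactor becomes $\binom nd(n-d)=(d+1)\binom n{d+1}$, matching $A_{n,d}^\beta$ and the power $F_{1,\beta+\frac{d-1}2}(h)^{\,n-d-1}$ in \eqref{eq:E_Vol_d_P}. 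For $S_{n,d}^\beta$ central symmetry forces each antipodal pair $\pm X_j$ into the slab $\{|t|<h\}$ about $H$, so the inner--side probability becomes $\big(F_{1,\beta+\frac{d-1}2}(h)-F_{1,\beta+\frac{d-1}2}(-h)\big)^{\,n-d-1}$ and the integral folds to $[0,1]$, the factor $2^{d+1}$ arising from the $2^d$ sign choices of the facet vertices and the folding. For $Q_{n,d}^\beta$ the origin is an extra vertex: facets not containing it reproduce the $P$--computation but now with $\eta_\pm>0$, giving the $\int_0^1$ term, while the facets incident to the origin contribute the separate, lower--dimensional term $D_{n,d}^\beta$ (whence the $\binom nd$ and the factor $2^{-n}$). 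The main obstacle is the exact evaluation of the in--slice integral --- the squared simplex--volume moment together with the Blaschke--Petkantschin Jacobian and all normalizing constants --- and checking that the resulting Gamma factors collapse precisely into $A_{n,d}^\beta$ and $D_{n,d}^\beta$; the second delicate point is the orientation bookkeeping for $\eta_\pm$, which is what lets the final integral run over the full range $(-1,1)$.
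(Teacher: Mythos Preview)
Your approach is correct and is precisely the alternative route the paper itself sketches in the Remark following Theorem~\ref{t1_prime}: write $\Vol_d(P_{n,d}^\beta)=\frac1d\,T_{1,1,\pm}^{d,d-1}(P_{n,d}^\beta)$, compute the latter via Blaschke--Petkantschin exactly as in the proof of Theorem~\ref{t1}, and then integrate by parts to pass from $h(1-h^2)^{q'}F^{n-d}$ to $(1-h^2)^qF^{n-d-1}$. The paper, however, takes a different path for its official proof: it goes \emph{up} one dimension, observing via the projection lemma that $P_{n,d+1}^{\beta-1/2}\big|L\overset{d}{=}P_{n,d}^\beta$, so that by Kubota $\EE\Vol_d(P_{n,d}^\beta)=\frac{\kappa_d}{(d+1)\kappa_{d+1}}\EE T_{0,1}^{d+1,d}(P_{n,d+1}^{\beta-1/2})$, and then simply reads off the right-hand side from Theorem~\ref{t1}. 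This dimension-lift avoids the signed distance and the integration by parts entirely, but at the cost of requiring $\beta>-\tfrac12$ and then an analytic-continuation argument to cover the full range $\beta>-1$. Your route is more self-contained and works for all $\beta>-1$ from the start; the paper's route recycles Theorem~\ref{t1} with no further calculation.

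One point in your outline needs correction. For $Q_{n,d}^\beta$ you say the facets through the origin ``contribute the separate, lower-dimensional term $D_{n,d}^\beta$''. They do not: in the cone decomposition those facets have $\eta_\pm=0$ and contribute nothing. In your framework the constant $D_{n,d}^\beta$ arises instead as the \emph{boundary term at $h=0$} when you integrate $\int_0^1 h(1-h^2)^{q'}F(h)^{n-d}\,\dint h$ by parts (the antiderivative $-(1-h^2)^{q'+1}/(2(q'+1))$ does not vanish there, and $F(0)=\tfrac12$ supplies the $2^{-n}$). For $P_{n,d}^\beta$ and $S_{n,d}^\beta$ the integration is over a symmetric interval and this boundary contribution is absent, which is why no additive constant appears in those two formulae.
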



\begin{theorem}\label{t2_prime}
	Let $X_1,\dots,X_n$ be independent beta$^\prime$-distributed random points in $\RR^d$ with parameter $\b>\frac{d+1}{2}$. Then
	\begin{align*}
	\EE \Vol_d(\tilde P_{n,d}^{\beta})
	&=
	\tilde A_{n,d}^\b \int \limits_{-\infty}^{\infty} (1+h^2)^{-\tilde q} \tilde{F}_{1,\b-\frac{d-1}{2}}(h)^{n-d-1} \, \dint h,\\
	\EE \Vol_d(\tilde S_{n,d}^{\beta})
	&=
	2^{d+1} \tilde A_{n,d}^\b   \int\limits_{0}^{\infty}  (1+h^2)^{-\tilde q} \left(\tilde{F}_{1,\b-\frac{d-1}{2}}(h)-\tilde{F}_{1,\b-\frac{d-1}{2}}(-h)\right)^{n-d-1} \,\dint h,\\
	\EE \Vol_d(\tilde Q_{n,d}^{\beta})
	&=
	\tilde{D}_{n,d}^\b + \tilde A_{n,d}^\b  \int\limits_{0}^{\infty} (1+h^2)^{-\tilde q} \tilde{F}_{1,\b-\frac{d-1}{2}}(h)^{n-d-1} \,\dint h,
	\end{align*}
	where $\tilde{q}= (d+1)(\b + \frac 12) - \frac d2 (d+3)$ and
	\begin{align*}
	\tilde A_{n,d}^\b
	&=
	\frac {(d+1) \kappa_d}{2^d \pi^{\frac{d+1}{2}}} \binom {n}{d+1} \left(\beta - \frac {d+1}2\right)  \left(\frac{\Gamma\left(\b - \frac{d+1}{2} \right)}{\Gamma\left(\b - \frac {d} 2 \right)} \right)^{d+1},\\
	\tilde{D}_{n,d}^\b
	&=\frac {\kappa_d}{2^n \pi^{\frac{d}{2}}} \binom {n}{d} \left(\frac{\Gamma\left(\b - \frac{d+1}{2} \right)}{\Gamma\left(\b - \frac {d} 2 \right)} \right)^{d+1}.
	\end{align*}
\end{theorem}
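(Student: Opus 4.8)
The plan is to mirror the proof of Theorem~\ref{t2} line by line, replacing the compactly supported weight $(1-\|x\|^2)^{\beta}$ by the heavy-tailed weight $(1+\|x\|^2)^{-\beta}$ and tracking the effect on every constant; the integral-geometric skeleton is identical, so the genuinely new content lies in the integrability analysis that forces the stronger hypothesis $\beta>\tfrac{d+1}{2}$. For $\tilde P_{n,d}^{\beta}$ I would start from the identity $\EE \Vol_d(\tilde P_{n,d}^{\beta})=\int_{\RR^d}\PP(y\in\tilde P_{n,d}^{\beta})\,\dint y$, which is the natural starting point here because, in contrast to the complementary ``uncovered'' volume, it will turn out to be finite exactly in the stated parameter range. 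The expected volume is then expressed, as in the beta case, as $\binom{n}{d+1}$ times the expectation of a geometric functional of $d+1$ of the points, subject to the remaining $n-d-1$ points lying on the prescribed side of a hyperplane determined by them.

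The engine of the reduction is the affine Blaschke--Petkantschin formula, applied to the hyperplane $H$ spanned by $d$ of the distinguished points. Rotational invariance of the beta$^\prime$ law lets the orientation of $H$ be integrated out against the invariant measure on $\SS^{d-1}$, contributing a fixed constant built from $\kappa_d$, $\omega_d$ and the Blaschke--Petkantschin Jacobian, and leaving the signed distance $h$ of $H$ to the origin as the only essential variable. Two ingredients then assemble the final one-dimensional integral. First, the product of the point densities restricted to $H$, multiplied by the Jacobian, collapses into the weight $(1+h^2)^{-\tilde q}$ with exponent $\tilde q=(d+1)(\beta+\tfrac12)-\tfrac d2(d+3)$. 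Second, each of the $n-d-1$ remaining points must fall on the correct side of $H$; since the orthogonal projection of the $d$-dimensional beta$^\prime$ distribution with parameter $\beta$ onto a line is the one-dimensional beta$^\prime$ distribution with the shifted parameter $\beta-\tfrac{d-1}{2}$, this one-sided probability equals $\tilde F_{1,\beta-\frac{d-1}{2}}(h)$, and independence raises it to the power $n-d-1$. Combining these with the normalizing constants from \eqref{eq:c_d_beta} produces $\tilde A_{n,d}^{\beta}$.

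The three polytopes differ only by bookkeeping at the level of this master integral. For $\tilde P_{n,d}^{\beta}$ the signed distance $h$ ranges over all of $\RR$. For the symmetric hull $\tilde S_{n,d}^{\beta}$ the built-in central symmetry halves the range to $h\ge 0$, replaces the one-sided probability $\tilde F_{1,\beta-\frac{d-1}{2}}(h)$ by the two-sided slab probability $\tilde F_{1,\beta-\frac{d-1}{2}}(h)-\tilde F_{1,\beta-\frac{d-1}{2}}(-h)$, and carries a symmetrization factor $2^{d+1}$. For $\tilde Q_{n,d}^{\beta}=[0,X_1,\dots,X_n]$ the origin is an extra deterministic vertex: the boundary configurations split into those not involving $0$, which reproduce the $\tilde P$-type integral over $h\ge 0$, and those in which $0$ participates, whose separate evaluation yields the closed-form summand $\tilde D_{n,d}^{\beta}$.

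The main obstacle, and the only genuine departure from the bounded beta case, is finiteness over the unbounded support $\RR^d$. The cleanest manifestation is that the constants $\tilde A_{n,d}^{\beta}$ and $\tilde D_{n,d}^{\beta}$ contain the factor $\Gamma(\beta-\tfrac{d+1}{2})$, which is positive and finite precisely when $\beta>\tfrac{d+1}{2}$; this traces back to the normalizations of the conditional densities on $H$. In parallel one must check convergence of the master integral as $h\to\pm\infty$: since $\tilde F_{1,\beta-\frac{d-1}{2}}\to 1$ the integrand decays like $(1+h^2)^{-\tilde q}$, and at the threshold $\beta=\tfrac{d+1}{2}$ a short computation gives $\tilde q=1$, so the integral is (just) finite. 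I would therefore dispose of the integrability bounds first, also verifying that the shifted parameter satisfies $\beta-\tfrac{d-1}{2}>\tfrac12$ so that $\tilde F_{1,\beta-\frac{d-1}{2}}$ is a bona fide one-dimensional beta$^\prime$ distribution, and only afterwards carry out the routine Gamma-function bookkeeping converting the Blaschke--Petkantschin constants into the stated $\tilde A_{n,d}^{\beta}$ and $\tilde D_{n,d}^{\beta}$.
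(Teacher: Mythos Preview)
Your plan does not match the paper's argument, and as written the central step is not justified. The paper proves both Theorems~\ref{t2} and~\ref{t2_prime} by a \emph{dimension lift}: by Lemma~\ref{lem1}, projecting the $(d{+}1)$-dimensional beta$'$ density with parameter $\beta+\tfrac12$ onto any hyperplane yields the $d$-dimensional beta$'$ density with parameter $\beta$, so $\tilde P_{n,d+1}^{\beta+1/2}\,|\,L\overset{d}{=}\tilde P_{n,d}^{\beta}$ for every $L\in G(d+1,d)$. Kubota's formula (Lemma~\ref{thm:Kub}) then gives
\[
\EE\Vol_d(\tilde P_{n,d}^{\beta})=\frac{\kappa_d}{(d+1)\kappa_{d+1}}\,\EE T_{0,1}^{d+1,d}\bigl(\tilde P_{n,d+1}^{\beta+1/2}\bigr),
\]
since the $d$th intrinsic volume of a full-dimensional polytope in $\RR^{d+1}$ is half its surface area, and the right-hand side is read off from Theorem~\ref{t1_prime}. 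Thus the factor $\binom{n}{d+1}$, the exponent $n-d-1$, and the Blaschke--Petkantschin step all live in $\RR^{d+1}$, where the $d+1$ distinguished points span a $d$-dimensional affine hyperplane and the remaining $n-d-1$ points are tested against it.

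Your outline instead stays in $\RR^d$, starting from $\int_{\RR^d}\PP(y\in\tilde P_{n,d}^{\beta})\,\dint y$ and asserting that this equals $\binom{n}{d+1}$ times a functional of $d+1$ distinguished points, with the remaining $n-d-1$ on one side of ``the hyperplane $H$ spanned by $d$ of the distinguished points''. That passage is the gap: you give no mechanism for extracting such a decomposition from $\PP(y\in\tilde P_{n,d}^{\beta})$, and in $\RR^d$ a hyperplane is determined by $d$ points, so the role of the $(d{+}1)$th distinguished point is left unexplained. The natural \emph{direct} route in $\RR^d$, via $\Vol_d=\tfrac1d\,T_{1,1,\pm}^{d,d-1}$ as in the remark after Theorem~\ref{t1}, produces $\binom{n}{d}$ and the exponent $n-d$, and matches the stated formula only after an integration by parts. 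If you genuinely want to mirror the proof of Theorem~\ref{t2}, the idea you are missing is the Kubota/projection identity above; once it is in place the result is an immediate corollary of Theorem~\ref{t1_prime}, with $\tilde A_{n,d}^{\beta}=\tfrac{\kappa_d}{(d+1)\kappa_{d+1}}\,\tilde C_{n,d+1}^{\beta+1/2,1}$ and no separate integrability analysis needed beyond the hypothesis already built into Theorem~\ref{t1_prime}.
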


The formulae for the expected intrinsic volumes can be obtained using the following proposition.
\begin{proposition}\label{prop:expected_intrinsic}
	The expected intrinsic volumes $\EE V_k(P_{n,d}^{\beta})$ and $\EE V_k(\tilde P_{n,d}^{\beta})$  for $k=1,\ldots,d$ are given by the formulae
	\begin{align*}
	\EE V_k(P_{n,d}^{\beta})
	&=
	\binom{d}{k} \frac{\k_d}{\k_k \k_{d-k}} \EE \Vol_k\left( P_{n,k}^{\beta + \frac{d-k}{2}}\right),\\
	\EE V_k(\tilde P_{n,d}^{\beta})
	&=
	\binom{d}{k} \frac{\k_d}{\k_k \k_{d-k}} \EE \Vol_k\left( \tilde P_{n,k}^{\beta - \frac{d-k}{2}}\right).
	\end{align*}
	These formulae hold if $P_{n,d}^{\beta}$, respectively $\tilde P_{n,d}^{\beta}$, is replaced by $S_{n,d}^{\beta}$ or $Q_{n,d}^{\beta}$, respectively $\tilde S_{n,d}^{\beta}$ or $\tilde Q_{n,d}^{\beta}$.
\end{proposition}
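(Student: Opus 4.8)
The plan is to reduce the expected $k$-th intrinsic volume of a random polytope in $\RR^d$ to an expected $k$-dimensional volume of an analogous random polytope living in a $k$-dimensional subspace, using the Kubota-type integral-geometric formula together with the special projection behaviour of the beta and beta$^\prime$ distributions. The key structural fact I would exploit is that the image of a beta-distributed (resp.\ beta$^\prime$-distributed) random point in $\RR^d$ under the orthogonal projection onto a $k$-dimensional subspace $L$ is again beta-distributed (resp.\ beta$^\prime$-distributed) in $L\cong\RR^k$, but with a shifted parameter: from $\beta$ to $\beta+\tfrac{d-k}{2}$ in the beta case and from $\beta$ to $\beta-\tfrac{d-k}{2}$ in the beta$^\prime$ case. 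This is the arithmetic source of the parameter shift appearing in the statement.

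\medskip

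First I would recall Kubota's formula, which expresses the $k$-th intrinsic volume of a convex body $K\subset\RR^d$ as a mean projection volume:
\begin{equation}\label{eq:kubota_plan}
V_k(K)
=
\binom{d}{k}\frac{\k_d}{\k_k\k_{d-k}}
\int_{G(d,k)} \Vol_k\bigl(K|L\bigr)\,\nu_k(\dint L),
\end{equation}
where $G(d,k)$ is the Grassmannian of $k$-dimensional linear subspaces of $\RR^d$, $\nu_k$ is the rotation-invariant probability measure on it, and $K|L$ denotes the orthogonal projection of $K$ onto $L$. Taking $K=P_{n,d}^{\beta}$ and passing to expectations, I would use Fubini to interchange $\EE$ with the integral over $G(d,k)$, obtaining
\begin{equation}\label{eq:after_fubini_plan}
\EE V_k(P_{n,d}^{\beta})
=
\binom{d}{k}\frac{\k_d}{\k_k\k_{d-k}}
\int_{G(d,k)} \EE\,\Vol_k\bigl(P_{n,d}^{\beta}\big|L\bigr)\,\nu_k(\dint L).
\end{equation}
The crucial observation is that the projection commutes with taking convex hulls: $[X_1,\dots,X_n]|L = [X_1|L,\dots,X_n|L]$. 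Hence $P_{n,d}^{\beta}|L$ is the convex hull of the $n$ projected points $X_1|L,\dots,X_n|L$ inside the $k$-dimensional space $L$.

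\medskip

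The main step, and the one I expect to carry the technical weight, is the distributional identity for the projected points: I would show that for fixed $L\in G(d,k)$ the projections $X_i|L$ are i.i.d.\ beta-distributed in $L\cong\RR^k$ with the shifted parameter $\beta+\tfrac{d-k}{2}$. This is a marginalisation computation: integrating the density $f_{d,\beta}(x)=c_{d,\beta}(1-\|x\|^2)^{\beta}$ over the $(d-k)$-dimensional orthogonal complement $L^{\perp}$, one writes $x=y+z$ with $y\in L$, $z\in L^\perp$ and evaluates $\int_{L^\perp}(1-\|y\|^2-\|z\|^2)^{\beta}\,\dint z$; substituting $z=\sqrt{1-\|y\|^2}\,w$ pulls out a factor $(1-\|y\|^2)^{\beta+(d-k)/2}$ times a constant depending only on $d,k,\beta$, which is exactly the beta density in $\RR^k$ with parameter $\beta+\tfrac{d-k}{2}$. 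The analogous computation in the beta$^\prime$ case, using $(1+\|y\|^2+\|z\|^2)^{-\beta}$ and the substitution $z=\sqrt{1+\|y\|^2}\,w$, produces the density $\propto(1+\|y\|^2)^{-(\beta-(d-k)/2)}$, giving the downward shift $\beta\mapsto\beta-\tfrac{d-k}{2}$. Verifying that the normalising constants match \eqref{eq:c_d_beta} after the shift is a routine Gamma-function identity, so I would state it and relegate the bookkeeping.

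\medskip

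Once the projected sample is identified, the inner expectation in \eqref{eq:after_fubini_plan} is, for \emph{every} $L$, equal to $\EE\,\Vol_k\bigl(P_{n,k}^{\beta+(d-k)/2}\bigr)$ by rotation invariance, since the distribution of the projected points does not depend on the particular $L$ (only the ambient subspace is relabelled by an isometry to $\RR^k$). The integrand in \eqref{eq:after_fubini_plan} is therefore constant in $L$, and as $\nu_k$ is a probability measure the integral collapses to that constant, yielding
\[
\EE V_k(P_{n,d}^{\beta})
=
\binom{d}{k}\frac{\k_d}{\k_k\k_{d-k}}\,
\EE\,\Vol_k\bigl(P_{n,k}^{\beta+\frac{d-k}{2}}\bigr),
\]
and identically with $\beta-\tfrac{d-k}{2}$ in the beta$^\prime$ case. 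For the symmetric hull $S_{n,d}^\beta=[\pm X_1,\dots,\pm X_n]$ and the hull-with-origin $Q_{n,d}^\beta=[0,X_1,\dots,X_n]$ the argument is verbatim: projection still commutes with the convex hull (and with the operations $x\mapsto\pm x$ and adjoining $0$), and the same marginalisation identifies the projected generators, so the proof extends with no extra ideas. The only genuine obstacle is the marginalisation identity in the middle paragraph; everything else is Kubota plus rotation-invariance.
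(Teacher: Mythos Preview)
Your proposal is correct and follows essentially the same route as the paper: Kubota's formula plus Fubini, combined with the projection lemma that a beta (resp.\ beta$^\prime$) point in $\RR^d$ projected onto $L\in G(d,k)$ is beta (resp.\ beta$^\prime$) in $\RR^k$ with parameter shifted by $\pm\tfrac{d-k}{2}$, which the paper isolates as a separate lemma and proves by the very same marginalisation/substitution you sketch. The extension to $S_{n,d}^\beta$, $Q_{n,d}^\beta$ and their beta$^\prime$ analogues is handled identically in the paper, by the observation that projection commutes with the convex-hull, sign-change, and adjoining-the-origin operations.
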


\subsection{Expected surface area and expected mean width}

In particular, \Cref{prop:expected_intrinsic} implies formulae for the expected surface area of the polytopes $P_{n,d}^{\beta}, S_{n,d}^{\beta}, Q_{n,d}^{\beta}, \tilde P_{n,d}^{\beta}, \tilde S_{n,d}^{\beta}$ and $\tilde Q_{n,d}^{\beta}$. For a convex set $K\subset\RR^d$ we use the abbreviation $S_{d-1}(K):=2V_{d-1}(K)$ to indicate the surface area of $K$.

\begin{corollary}\label{t3}
	Let $X_1,\dots,X_n$ be independent beta-distributed random points in $\BB^d$ with parameter $\b>-1$. Then
	\begin{align*}
	\EE S_{d-1}(P_{n,d}^{\beta})
	&=
	\g_d A_{n,d-1}^{\b+\frac{1}{2}} \int\limits_{-1}^{1}  \left(1-h^2\right)^{q} F_{1,\b+\frac{d-1}{2}}(h)^{n-d} \,\dint h,\\
	\nonumber\EE S_{d-1}(S_{n,d}^{\beta})
	&=
	2^{d} \g_d A_{n,d-1}^{\b+\frac{1}{2}} \int\limits_{0}^{1}  \left(1-h^2\right)^{q} \left(F_{1,\b+\frac{d-1}{2}}(h)-F_{1,\b+\frac{d-1}{2}}(-h)\right)^{n-d} \,\dint h,\\
	\nonumber\EE S_{d-1}(Q_{n,d}^{\beta})
	&=
	\g_d \left( D_{n,d-1}^{\b+\frac{1}{2}} + A_{n,d-1}^{\b+\frac{1}{2}} \int\limits_{0}^{1}  \left(1-h^2\right)^{q} F_{1,\b+\frac{d-1}{2}}(h)^{n-d} \,\dint h \right),
	\end{align*}
	where $q= d\b + \frac{d-1}{2}(d+2)$ and $\g_d=\frac{d\k_d}{\k_{d-1}}$. The constants $A_{n,d}^\b$ and $D_{n,d}^\b$ are the same as in \Cref{t2}.
\end{corollary}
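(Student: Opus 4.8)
The plan is to obtain \Cref{t3} as a direct specialisation of \Cref{prop:expected_intrinsic} to the top non-trivial index $k=d-1$, combined with the relation $S_{d-1}(K)=2V_{d-1}(K)$ and the volume formulae of \Cref{t2} read in one dimension lower. No new analytic input is needed beyond these two results; the content is entirely in matching up the shifted parameters.

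First I would set $k=d-1$ in \Cref{prop:expected_intrinsic}. Since $\binom{d}{d-1}=d$ and $\kappa_1=2$, the combinatorial prefactor collapses to $\frac{d\kappa_d}{2\kappa_{d-1}}$, giving
\begin{equation*}
\EE V_{d-1}(P_{n,d}^{\beta}) = \frac{d\kappa_d}{2\kappa_{d-1}}\,\EE\Vol_{d-1}\bigl(P_{n,d-1}^{\beta+\frac12}\bigr).
\end{equation*}
Multiplying by $2$ and recalling $\gamma_d=\frac{d\kappa_d}{\kappa_{d-1}}$ yields $\EE S_{d-1}(P_{n,d}^{\beta})=\gamma_d\,\EE\Vol_{d-1}(P_{n,d-1}^{\beta+\frac12})$, and the statements for $S_{n,d}^{\beta}$ and $Q_{n,d}^{\beta}$ follow verbatim, since the last sentence of \Cref{prop:expected_intrinsic} permits $P$ to be replaced by $S$ or $Q$ throughout.

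It then remains to insert the expression for $\EE\Vol_{d-1}(P_{n,d-1}^{\beta+\frac12})$ supplied by \Cref{t2} under the substitutions $d\mapsto d-1$ and $\beta\mapsto\beta+\frac12$. The one point I would display explicitly is that these substitutions land exactly on the quantities appearing in the corollary: the exponent $(d+1)(\beta-\frac12)+\frac d2(d+3)$ becomes $d\beta+\frac{d-1}{2}(d+2)$, matching the $q$ in the statement; the subscript $\beta+\frac12+\frac{(d-1)-1}{2}$ collapses to $\beta+\frac{d-1}{2}$, so that $F_{1,\beta+\frac{d-1}{2}}$ appears as claimed; and the power $n-(d-1)-1$ equals $n-d$. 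The prefactors $A_{n,d-1}^{\beta+\frac12}$ and, in the $Q$-case, $D_{n,d-1}^{\beta+\frac12}$ are precisely the constants of \Cref{t2} evaluated at the shifted arguments, while in the symmetric case the factor $2^{d+1}$ of \Cref{t2} turns into $2^{(d-1)+1}=2^d$.

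Because every step is a specialisation of results already proved, there is no genuine analytic obstacle; the only real work is careful bookkeeping of the parameter shifts. The sole verification I would carry out in full is the simultaneous collapse of the exponent $q$ and of the $F$-subscript under $d\mapsto d-1$, $\beta\mapsto\beta+\frac12$, which makes the identification with the stated formulae transparent.
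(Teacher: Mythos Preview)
Your proposal is correct and follows exactly the approach indicated by the paper: the corollary is stated immediately after the remark that \Cref{prop:expected_intrinsic} ``implies formulae for the expected surface area,'' and your proof carries out precisely that specialisation to $k=d-1$ together with the substitution $d\mapsto d-1$, $\beta\mapsto\beta+\tfrac12$ in \Cref{t2}. Your parameter checks (the exponent $q$, the $F$-subscript, the power $n-d$, and the factor $2^d$) are all correct.
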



\begin{corollary}\label{t3_prime}
	Let $X_1,\dots,X_n$ be independent beta$^\prime$-distributed random points in $\RR^d$ with parameter $\b>\frac{d+1}{2}$. Then
	\begin{align*}
	\EE S_{d-1}(\tilde P_{n,d}^{\beta})
	&=
	\g_d \tilde A_{n,d-1}^{\b-\frac{1}{2}} \int \limits_{-\infty}^{\infty} (1+h^2)^{-\tilde q} \tilde{F}_{1,\b-\frac{d-1}{2}}(h)^{n-d} \,\dint h,\\
	\EE S_{d-1}(\tilde S_{n,d}^{\beta})
	&=
	2^{d} \g_d \tilde A_{n,d-1}^{\b-\frac{1}{2}}   \int\limits_{0}^{\infty}  (1+h^2)^{-\tilde q} \left(\tilde{F}_{1,\b-\frac{d-1}{2}}(h)-\tilde{F}_{1,\b-\frac{d-1}{2}}(-h)\right)^{n-d} \,\dint h,\\
	\EE S_{d-1}(\tilde Q_{n,d}^{\beta})
	&=
	\g_d \left( \tilde{D}_{n,d-1}^{\b-\frac{1}{2}} + \tilde A_{n,d-1}^{\b-\frac{1}{2}}  \int\limits_{0}^{\infty} (1+h^2)^{-\tilde q} \tilde{F}_{1,\b-\frac{d-1}{2}}(h)^{n-d} \,\dint h \right),
	\end{align*}
	where $\tilde{q}= d\beta -\frac{d-1}{2} (d+2)$ and $\g_d=\frac{d\k_d}{\k_{d-1}}$. The constants $\tilde A_{n,d}^\b$ and $\tilde D_{n,d}^\b$ are the same as in \Cref{t2_prime}.
\end{corollary}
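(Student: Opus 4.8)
The plan is to derive Corollary~\ref{t3_prime} directly from Proposition~\ref{prop:expected_intrinsic} and Theorem~\ref{t2_prime}, with no new integration required. First I would rewrite the surface area through the identity $S_{d-1}(K)=2V_{d-1}(K)$ and apply Proposition~\ref{prop:expected_intrinsic} in the beta$'$ case with $k=d-1$. For $\tilde P_{n,d}^{\b}$ this gives
$$
\EE S_{d-1}(\tilde P_{n,d}^{\b}) = 2\binom{d}{d-1}\frac{\k_d}{\k_{d-1}\k_1}\,\EE\Vol_{d-1}\!\left(\tilde P_{n,d-1}^{\b-\frac12}\right),
$$
and, by the final sentence of that proposition, the same identity holds with $\tilde P$ replaced by $\tilde S$ or $\tilde Q$. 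Since $\binom{d}{d-1}=d$ and $\k_1=2$, the prefactor collapses to $d\k_d/\k_{d-1}=\g_d$, which is exactly the constant appearing in the statement.

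The second step is to substitute the explicit formulae of Theorem~\ref{t2_prime}, read in dimension $d-1$ and with parameter $\b-\tfrac12$ in place of $\b$. I would track the simultaneous substitutions $d\mapsto d-1$ and $\b\mapsto\b-\tfrac12$ through each ingredient: the normalizing constants become $\tilde A_{n,d-1}^{\b-\frac12}$ and $\tilde D_{n,d-1}^{\b-\frac12}$; the exponent of the distribution function becomes $n-(d-1)-1=n-d$; and the index of $\tilde F$ becomes $(\b-\tfrac12)-\tfrac{(d-1)-1}{2}=\b-\tfrac{d-1}{2}$. The exponent $\tilde q$ of Theorem~\ref{t2_prime} transforms into $\big((d-1)+1\big)\big(\b-\tfrac12+\tfrac12\big)-\tfrac{d-1}{2}\big((d-1)+3\big)=d\b-\tfrac{d-1}{2}(d+2)$, which matches the value of $\tilde q$ quoted in the corollary. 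Multiplying each of the three resulting expressions by $\g_d$ then reproduces the displayed formulae verbatim.

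The argument contains no genuine analytic obstacle; the only point requiring attention is the admissibility of the reduction. Applying Theorem~\ref{t2_prime} in dimension $d-1$ demands that the reduced parameter exceed $\tfrac{(d-1)+1}{2}=\tfrac d2$, i.e.\ $\b-\tfrac12>\tfrac d2$, which is precisely the hypothesis $\b>\tfrac{d+1}{2}$ imposed in the corollary; this guarantees convergence of the integrals and legitimises the substitution. The main care needed is thus purely organisational: checking that the three simultaneous shifts in the dimension, the parameter $\b$, and the face dimension reproduce the exponents $\tilde q$ and $n-d$ together with the index $\b-\tfrac{d-1}{2}$ consistently across all three polytope families $\tilde P_{n,d}^{\b}$, $\tilde S_{n,d}^{\b}$ and $\tilde Q_{n,d}^{\b}$.
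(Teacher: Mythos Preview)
Your proposal is correct and matches the paper's intended argument: the corollary is stated as a direct specialisation of Proposition~\ref{prop:expected_intrinsic} (with $k=d-1$) combined with Theorem~\ref{t2_prime}, which is precisely what you carry out. Your parameter check $\b-\tfrac12>\tfrac d2$ and the verification of the transformed exponent $\tilde q$ are accurate and complete the derivation.
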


Similarly, we can find explicit formulae for the mean width of these random polytopes. We recall that the mean width $\mw_d(K)$ of a convex set $K\subset\RR^d$ is defined as the expected length of a projection of $K$ onto a uniformly chosen random line. The mean width is related to the first intrinsic volume by the formula $\mw_d(K)={2\kappa_{d-1}\over d\kappa_d}V_1(K)$.

\begin{corollary}\label{cor:width}
	Let $X_1,\dots,X_n$ be independent beta-distributed random points in $\BB^d$ with parameter $\b>-1$. Then
	\begin{align*}
	\EE\mw_{d}(P_{n,d}^{\beta})
	&=
	A_{n,1}^{\b+\frac{d-1}{2}} \int\limits_{-1}^{1}  \left(1-h^2\right)^{q} F_{1,\b+\frac{d-1}{2}}(h)^{n-2}\, \dint h,\\
	\EE \mw_{d}(S_{n,d}^{\beta})
	&=4A_{n,1}^{\b+\frac{d-1}{2}} \int\limits_{0}^{1}  \left(1-h^2\right)^{q} \left(F_{1,\b+\frac{d-1}{2}}(h)-F_{1,\b+\frac{d-1}{2}}(-h)\right)^{n-2} \,\dint h,\\
	\EE \mw_{d}(Q_{n,d}^{\beta})
	&=
	D_{n,1}^{\b+\frac{d-1}{2}} + A_{n,1}^{\b+\frac{d-1}{2}} \int\limits_{0}^{1}  \left(1-h^2\right)^{q} F_{1,\b+\frac{d-1}{2}}(h)^{n-2}\, \dint h,
	\end{align*}
	where $q= 2\beta+d$. The constants $A_{n,d}^\b$ and $D_{n,d}^\b$ are the same as in \Cref{t2}.
\end{corollary}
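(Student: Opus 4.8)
The plan is to reduce the mean width to the first intrinsic volume, then to a one-dimensional volume via \Cref{prop:expected_intrinsic}, and finally to specialise \Cref{t2} to dimension one. The corollary is thus a direct consequence of results already established, and the work is essentially bookkeeping of the parameter shifts.

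First I would use the stated relation $\mw_d(K)=\frac{2\kappa_{d-1}}{d\kappa_d}V_1(K)$ between the mean width and the first intrinsic volume. Taking expectations and applying the case $k=1$ of \Cref{prop:expected_intrinsic}, together with $\binom{d}{1}=d$ and $\kappa_1=2$, gives
$$
\EE V_1(P_{n,d}^{\beta})=\frac{d\kappa_d}{2\kappa_{d-1}}\,\EE\Vol_1\!\left(P_{n,1}^{\beta+\frac{d-1}{2}}\right).
$$
Substituting this into the mean-width relation, the geometric constants $\frac{2\kappa_{d-1}}{d\kappa_d}$ and $\frac{d\kappa_d}{2\kappa_{d-1}}$ cancel exactly, leaving the clean identity $\EE\mw_d(P_{n,d}^{\beta})=\EE\Vol_1(P_{n,1}^{\beta+\frac{d-1}{2}})$. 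The same cancellation applies verbatim with $P$ replaced by $S$ or $Q$, since the final sentence of \Cref{prop:expected_intrinsic} grants the identical reduction for those two models.

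Next I would evaluate the right-hand side by applying \Cref{t2} in the special case $d=1$ with parameter $\beta':=\beta+\frac{d-1}{2}$. There the exponent $n-d-1$ becomes $n-2$, the distribution function $F_{1,\beta'+\frac{d-1}{2}}$ collapses to $F_{1,\beta'}=F_{1,\beta+\frac{d-1}{2}}$ because the shift $\frac{d-1}{2}$ vanishes at $d=1$, and the prefactors are exactly $A_{n,1}^{\beta'}$ for $P$, the factor $2^{d+1}$ evaluated at $d=1$ equals $4$ so that the symmetric hull acquires $4A_{n,1}^{\beta'}$, and $D_{n,1}^{\beta'}$ appears for the hull with the origin adjoined. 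This reproduces the three displayed formulae of the corollary.

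The only point requiring a short verification is the exponent $q$. In \Cref{t2} one has $q=(d+1)(\beta-\frac12)+\frac{d}{2}(d+3)$; evaluating at $d=1$ with $\beta\mapsto\beta'$ gives $2(\beta'-\frac12)+2=2\beta'+1$, and resubstituting $\beta'=\beta+\frac{d-1}{2}$ yields $2\beta+d$, which matches the exponent claimed in \Cref{cor:width}. I expect no genuine obstacle: once the cancellation of the $\kappa$-constants is observed, the corollary follows by specialising \Cref{t2} to dimension one, the only care being the consistent tracking of the parameter shift $\beta\mapsto\beta+\frac{d-1}{2}$.
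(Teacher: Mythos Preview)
Your proposal is correct and follows precisely the route the paper intends: the corollary is obtained from \Cref{prop:expected_intrinsic} with $k=1$ (so that the $\kappa$-constants cancel against those in the mean-width formula, yielding $\EE\mw_d=\EE\Vol_1$ of the one-dimensional model with shifted parameter), followed by specialising \Cref{t2} to $d=1$. Your verification of the exponent $q$ and of the factors $4$ and $D_{n,1}^{\beta'}$ in the $S$- and $Q$-cases is accurate.
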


As in the case of the surface area, a different representation for $\EE\mw_d(P_{n,d}^{\beta})$ was previously given by Buchta, M\"uller and Tichy \cite{BuchtaMuellerTichy}.

\begin{corollary}
	Let $X_1,\dots,X_n$ be independent beta$^\prime$-distributed random points in $\RR^d$ with parameter $\b>\frac{d+1}{2}$. Then
	\begin{align*}
	\EE \mw_{d}(\tilde P_{n,d}^{\beta})
	&=
	\tilde A_{n,1}^{\b-\frac{d-1}{2}} \int \limits_{-\infty}^{\infty} (1+h^2)^{-\tilde q} \tilde{F}_{1,\b-\frac{d-1}{2}}(h)^{n-2} \,\dint h,\\
	\EE \mw_{d}(\tilde S_{n,d}^{\beta})
	&=
	4\tilde A_{n,1}^{\b-\frac{d-1}{2}}   \int\limits_{0}^{\infty}  (1+h^2)^{-\tilde q} \left(\tilde{F}_{1,\b-\frac{d-1}{2}}(h)-\tilde{F}_{1,\b-\frac{d-1}{2}}(-h)\right)^{n-2} \,\dint h,\\
	\EE \mw_{d}(\tilde Q_{n,d}^{\beta})
	&=
	\tilde{D}_{n,1}^{\b-\frac{d-1}{2}} + \tilde A_{n,1}^{\b-\frac{d-1}{2}}  \int\limits_{0}^{\infty} (1+h^2)^{-\tilde q} \tilde{F}_{1,\b-\frac{d-1}{2}}(h)^{n-2} \,\dint h,
	\end{align*}
	where $\tilde{q}= 2\beta-d$. The constants $\tilde A_{n,d}^\b$ and $\tilde D_{n,d}^\b$ are the same as in \Cref{t2_prime}.
\end{corollary}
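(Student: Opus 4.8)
The plan is to derive all three identities from three already-established ingredients: the relation $\mw_d(K)=\frac{2\k_{d-1}}{d\k_d}V_1(K)$ recalled just above the statement, the case $k=1$ of \Cref{prop:expected_intrinsic}, and \Cref{t2_prime} specialised to dimension one. The whole argument is a chain of substitutions, so the only real work is checking that the parameters line up.

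First I would combine the first two ingredients. Taking $k=1$ in \Cref{prop:expected_intrinsic} gives
\[
\EE V_1(\tilde P_{n,d}^{\beta}) = \binom{d}{1}\frac{\k_d}{\k_1\k_{d-1}}\,\EE\Vol_1\!\left(\tilde P_{n,1}^{\b-\frac{d-1}{2}}\right) = \frac{d\,\k_d}{2\,\k_{d-1}}\,\EE\Vol_1\!\left(\tilde P_{n,1}^{\b-\frac{d-1}{2}}\right),
\]
where I used $\k_1=2$. Multiplying by $\frac{2\k_{d-1}}{d\k_d}$ as prescribed by the mean-width relation, the geometric constants cancel completely and I obtain the clean reduction
\[
\EE\mw_d(\tilde P_{n,d}^{\beta}) = \EE\Vol_1\!\left(\tilde P_{n,1}^{\b-\frac{d-1}{2}}\right).
\]
The identical cancellation works verbatim with $\tilde P$ replaced by $\tilde S$ or $\tilde Q$, since \Cref{prop:expected_intrinsic} explicitly extends to those polytopes.

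Next I would evaluate the right-hand side via \Cref{t2_prime} in dimension $d=1$ with the shifted parameter $\b':=\b-\frac{d-1}{2}$. This immediately produces the three claimed integrals, with the factor $2^{1+1}=4$ appearing in the symmetric case and the additive term $\tilde D_{n,1}^{\b'}$ appearing in the $\tilde Q$ case.

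The only thing to check carefully is the parameter bookkeeping, and this is where I would spend most of the effort, though it is routine. The exponent in \Cref{t2_prime} is $\tilde q=(d+1)(\b+\frac12)-\frac d2(d+3)$, so at $d=1$ and $\b'$ it equals $2(\b'+\frac12)-2=2\b'-1$; substituting $\b'=\b-\frac{d-1}{2}$ yields exactly $2\b-d$, the exponent claimed in the corollary. The density parameter $\b'-\frac{1-1}{2}=\b'=\b-\frac{d-1}{2}$ matches the index of $\tilde F$ in the statement, the power $n-d-1$ becomes $n-2$, and the prefactor $\tilde A_{n,1}^{\b'}=\tilde A_{n,1}^{\b-\frac{d-1}{2}}$ is precisely the constant written in the corollary. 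Finally, the hypothesis $\b>\frac{d+1}{2}$ of the corollary is exactly the condition $\b'>1=\frac{1+1}{2}$ required to apply \Cref{t2_prime} in dimension one, so no additional restriction on $\b$ is introduced.
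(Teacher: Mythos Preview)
Your proposal is correct and follows precisely the route the paper intends: the corollary is stated without an explicit proof because it is an immediate consequence of the mean-width identity $\mw_d(K)=\frac{2\k_{d-1}}{d\k_d}V_1(K)$, the $k=1$ case of \Cref{prop:expected_intrinsic}, and \Cref{t2_prime} specialised to dimension one. Your parameter bookkeeping (in particular the verification that $\tilde q$ becomes $2\b-d$ and that the hypothesis $\b>\frac{d+1}{2}$ matches the requirement for the one-dimensional application) is exactly what is needed and is carried out correctly.
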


\subsection{Random simplices and parallelotopes}

In the case when $n=d+1$, the random polytope $P_{d+1,d}^\beta$ is a simplex and a complete characterization of the distribution of its volume through its moments was obtained by Miles~\cite{Miles}; see also Kingman~\cite{Kingman} for the case $\beta=0$ in which the points are uniformly distributed in a ball.
We need to recall the results on the moments of the $d$-volume of a $d$-simplex and the $d$-volume of a $d$-parallelotope spanned by random points distributed according to either the beta-or beta$^\prime$-distribution. The following two results are due to Miles \cite[Equations (72) and (74)]{Miles} and Mathai \cite[Theorems 19.2.5 and 19.2.6]{Mathai}, respectively. However, while Miles \cite{Miles} only considers integer moments, we present an extension to moments of arbitrary (non-negative) real order, which is also needed elsewhere, for example in \cite{GroteKabluchkoThaele}.

In what follows we shall write $\Delta_d = \Delta_d\left( X_0,\dots,X_d \right)$ and $\nabla_d = \nabla_d\left( X_1,\dots,X_d \right)$ for the volume of the simplex with vertices $X_0,\ldots, X_d\in\RR^d$ and for the volume of the parallelotope spanned by $X_1,\ldots,X_{d}$, respectively.

\begin{proposition}{(Miles)}\label{lem:Miles}\ \\
	Let $X_0,\dots,X_d$ be i.i.d.\ random points in $\RR^d$.
	\begin{itemize}
		\item[(a)] If $X_0,\dots,X_d$ are distributed according to a beta-distribution on $\BB^d$ with parameter $\beta>-1$, then
		\[
		\EE_\b\left( \Delta_d^\kappa \right) = \left(d!\right)^{-\kappa} \frac{\G\left( \frac{d+1}{2}(2 \beta + d + \kappa) +1 \right)}{\G\left( \frac{d+1}{2}(2 \beta + d) + \frac{d \kappa}{2} +1 \right)} \left( \frac{\G\left( \frac{d}{2} + \b + 1 \right)}{\G\left( \frac{d+\kappa}{2} + \b + 1 \right)} \right)^{d+1} \prod_{i=1}^{d} \frac{\G\left( \frac{i+\kappa}{2} \right)}{\G\left( \frac{i}{2} \right)},
		\]
		for all $\kappa \in [0,\infty)$.
		\item[(b)] If $X_0,\dots,X_d$ are distributed according to a beta$^\prime$-distribution on $\RR^d$ with parameter $\beta>\frac d2$, then
		\[
		\widetilde{\EE}_{\b}\left( \Delta_d^\kappa \right) = \left(d!\right)^{-\kappa} \frac{\G\left( \left( \b - \frac{d}{2} \right)(d+1)-\frac{d}{2}\kappa \right)}{\G\left( \left( \b - \frac{d+\kappa}{2} \right)(d+1) \right)} \left( \frac{\G\left( \b -  \frac{d+\kappa}{2} \right)}{\G\left( \b - \frac{d}{2} \right)} \right)^{d+1} \prod_{i=1}^{d} \frac{\G\left( \frac{i+\kappa}{2} \right)}{\G\left( \frac{i}{2} \right)} \mbox{,}
		\]
		for all $\kappa \in \left[0, 2\b-d \right)$.
	\end{itemize}
\end{proposition}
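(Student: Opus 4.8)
I would treat (a) and (b) uniformly, since both ask for the moment $\EE(\Delta_d^\kappa)=(\mathrm{const})\int_{(\cdot)^{d+1}}\Delta_d^\kappa\prod_{i=0}^{d}w(x_i)\,dx_i$ of a power of the simplex volume against a product of spherically symmetric radial weights $w$, the only difference being $w(x)=(1-\|x\|^2)^\beta$ on $\BB^d$ versus $w(x)=(1+\|x\|^2)^{-\beta}$ on $\RR^d$ (the latter forcing the integrability bound $\kappa<2\beta-d$). The target factorises into a leading combinatorial Gamma-ratio, a $(d+1)$-st power of Gamma-ratios, and the universal product $\prod_{i=1}^{d}\Gamma(\tfrac{i+\kappa}{2})/\Gamma(\tfrac i2)$; the last equals $2^{-d\kappa/2}\,\EE|\det N|^\kappa$ for a Gaussian $d\times d$ matrix $N$ and is exactly the distribution-independent ``shape'' contribution, so the plan is to peel the radial information off the shape by integral geometry.

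\textbf{Main steps.} The central device is the affine Blaschke--Petkantschin formula applied to the first $d$ points with flat-dimension $k=d-1$: integrating $x_0,\dots,x_{d-1}$ over the $(d-1)$-flat $E$ they span introduces a factor $\Delta_{d-1}(x_0,\dots,x_{d-1})^{\,d-(d-1)}=\Delta_{d-1}$ and replaces $\prod dx_i$ by $\lambda_E^{\otimes d}$ times $\mu_{d-1}(dE)$. Parametrising $E$ by its signed distance $h$ to the origin and unit normal $u$, and using rotational invariance to integrate out the normal, I would exploit the self-similarity of the radial weights under sectioning: on the flat one has $(1-\|x\|^2)^\beta=(1-h^2)^\beta(1-\|z\|^2)^\beta$ after the rescaling $x=h\,u+\sqrt{1-h^2}\,z$ (and the analogous $(1+h^2)$ identity in the beta$'$ case), so that the in-flat points become beta-distributed in dimension $d-1$ with the \emph{same} parameter $\beta$, up to explicit powers of $(1-h^2)$. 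Together with $\Delta_{d-1}(x)=(1-h^2)^{(d-1)/2}\Delta_{d-1}(z)$ and the apex relation $\Delta_d=\tfrac1d\,\Delta_{d-1}\,\operatorname{dist}(x_d,E)$, this turns one level of the computation into a $(d-1)$-dimensional simplex moment of the same type multiplied by one-dimensional integrals in $h$ and in the height of $x_d$. Iterating from dimension $d$ down to $1$, the integrals telescope: the $h$-integrals $\int(1-h^2)^{\,\mathrm{power}}\,dh$ produce the $\beta$-dependent Gamma-ratios and their $(d+1)$-st power, while the successive factors coming from the $\Delta_{d-1}$-weights and the $\kappa$-th powers assemble into the universal product $\prod_{i=1}^{d}\Gamma(\tfrac{i+\kappa}{2})/\Gamma(\tfrac i2)$. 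That sectioning lowers the dimension while preserving $\beta$ is the same mechanism that governs the half-integer parameter shifts in \Cref{prop:expected_intrinsic}.

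\textbf{Main obstacle.} Two points require care. First, the bookkeeping of normalising constants, Jacobians and powers of $(1-h^2)$ (resp.\ $(1+h^2)$) through the recursion is where essentially all the work sits; one must check that the one-dimensional integrals at each level are genuine Beta integrals and that they recombine into precisely the stated Gamma-ratios rather than merely up to a constant. Second, the extension to non-integer $\kappa$: each height integral must be shown to converge on the claimed range, which in the beta$'$ case is exactly the wall $\kappa<2\beta-d$ (the point at which $\int(1+h^2)^{-\beta}h^{\mathrm{power}}\,dh$ ceases to converge) and for which the parameter must stay admissible at every level of the recursion. If one prefers to bypass the recursion, the alternative is to take the integer-moment identity of Miles and Mathai as given and upgrade it to real $\kappa$ by Carlson's theorem, after verifying that the difference of the two sides is holomorphic and of exponential type $<\pi$ on a right half-plane with controlled growth on the imaginary axis; verifying these growth hypotheses, uniformly over the shape integral and up to the beta$'$ integrability threshold, is then the delicate step.
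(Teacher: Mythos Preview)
Your high-level machinery---affine Blaschke--Petkantschin together with the sectioning self-similarity of the beta and beta$'$ weights---is exactly what the paper uses. But the specific recursion you outline has a gap. When you take $x_d$ as apex and write $\Delta_d=\tfrac1d\,\Delta_{d-1}\operatorname{dist}(x_d,E)$, the height integral over $x_d$ does \emph{not} decouple from the hyperplane parameter $h$: after rotating so that $E=\{z_d=h\}$ and projecting $x_d$ onto the normal line (its coordinate then has density $f_{1,\beta+(d-1)/2}$ by \Cref{lem1}), what remains is
\[
\int_{-1}^{1}\!\int_{-1}^{1} |t-h|^{\kappa}\,(1-h^2)^{A}\,(1-t^2)^{\beta+\frac{d-1}{2}}\,\dint t\,\dint h,
\]
which is not a product of two Beta integrals (the inner integral is not a power of $1-h^2$). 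So the claim that one obtains ``one-dimensional integrals in $h$ and in the height of $x_d$'' that telescope cleanly into the stated Gamma ratios is where the argument breaks.

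The paper sidesteps this by routing through the parallelotope. Writing $\nabla_d(X_1,\dots,X_d)=d!\,\Delta_d(0,X_1,\dots,X_d)$ and applying base-times-height with apex at the \emph{origin}, the height of the apex above $E=\operatorname{aff}(X_1,\dots,X_d)$ is exactly $h=\operatorname{dist}(0,E)$, so the $h$-integral becomes the genuine Beta integral $\int_0^1 h^{\kappa}(1-h^2)^{d\beta+\frac{d-1}{2}(d+\kappa+1)}\,\dint h$. One application of Blaschke--Petkantschin and \Cref{lem3} then yields $\EE_\beta(\nabla_d^{\kappa})=\text{const}\cdot\EE_\beta(\Delta_{d-1}^{\kappa+1})$; shifting indices expresses $\EE_\beta(\Delta_d^{\kappa})$ through $\EE_\beta(\nabla_{d+1}^{\kappa-1})$, and Mathai's parallelotope moments (\Cref{lem:Mathai}), already known for real exponents, finish the computation. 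No iteration down to dimension $1$ is needed, and no Carlson-type interpolation is required: the identity holds directly for $\kappa>0$ and extends by analytic continuation.
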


\begin{remark}
	Note that in (b) we have indeed $-\frac{d}{2}\kappa$ instead of $+\frac{d}{2}\kappa$, as stated in \cite{Miles}. This typo has already been observed and corrected  by Chu \cite{Chu}, for example.
\end{remark}

The proof of \Cref{lem:Miles} will be based on the following moment formulae of Mathai \cite{Mathai_Random_Parallelotopes} for the volume of the random $d$-parallelotope spanned by $d$ points distributed in $\RR^d$ according to the beta- or the beta'-distribution. We emphasize that these formulae are known to be valid for arbitrary non-negative real moments; see~\cite{Mathai_Random_Parallelotopes}.

\begin{proposition}{(Mathai)}\label{lem:Mathai}\ \\
	Let $X_1,\dots,X_d$ be i.i.d.\ random points in $\RR^d$.
	\begin{itemize}
		\item[(a)] If $X_1,\dots,X_d$ are distributed according to a beta-distribution on $\BB^d$ with parameter $\beta>-1$, then
		\[
		\EE_\b\left( \nabla_d^\kappa \right) = \left( \frac{\G\left( \b + \frac{d}{2} + 1 \right)}{\G\left( \b + \frac{d+\kappa}{2} + 1 \right)} \right)^d \prod_{i=1}^{d} \frac{\G\left( \frac{d+\kappa-i+1}{2} \right) }{\G\left( \frac{d-i+1}{2} \right)},
		\]
		for all $\kappa \in [0,\infty)$.
		\item[(b)] If $X_1,\dots,X_d$ are distributed according to a beta$^\prime$-distribution on $\RR^d$ with parameter $\beta>\frac d2$, then
		\[
		\widetilde{\EE}_{\b}\left( \nabla_d^\kappa \right) = \left( \frac{\G\left( \b - \frac{d+\kappa}{2} \right)}{\G\left( \b - \frac{d}{2} \right)} \right)^d \prod_{i=1}^{d} \frac{\G\left( \frac{d+\kappa-i+1}{2} \right) }{\G\left( \frac{d-i+1}{2} \right)} \mbox{,}
		\]
		for all $\kappa \in \left[0, 2\b-d \right)$.
	\end{itemize}
\end{proposition}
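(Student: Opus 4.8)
The plan is to compute the moments by decomposing the parallelotope volume through the Gram--Schmidt orthogonalisation and exploiting the spherical symmetry of both densities. Writing $L_{i-1} = \operatorname{lin}(X_1,\dots,X_{i-1})$ for the linear hull of the first $i-1$ points (with $L_0 = \{0\}$), one has the elementary identity
$$\nabla_d(X_1,\dots,X_d) = \prod_{i=1}^{d} \operatorname{dist}(X_i, L_{i-1}),$$
since each new vector contributes its distance to the span of the previous ones as the ``height'' of the parallelotope. The strategy is then to raise this to the $\kappa$-th power, condition successively on $X_1,\dots,X_{i-1}$, and peel off one factor at a time via the tower property, noting that for $i\le d$ the points $X_1,\dots,X_{i-1}$ are almost surely linearly independent.

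First I would show that, conditionally on $X_1,\dots,X_{i-1}$, the random variable $\operatorname{dist}(X_i,L_{i-1})$ has a distribution that does not depend on the conditioning. Indeed, $L_{i-1}$ is an $(i-1)$-dimensional subspace independent of $X_i$, and $\operatorname{dist}(X_i,L_{i-1}) = \|\Pi_{i-1} X_i\|$, where $\Pi_{i-1}$ is the orthogonal projection onto the $(d-i+1)$-dimensional complement $L_{i-1}^{\perp}$. By the rotational invariance of $f_{d,\beta}$ and $\tilde f_{d,\beta}$, the projection of $X_i$ onto any fixed subspace of dimension $k := d-i+1$ has the same law as the projection onto a coordinate subspace, and a direct marginalisation shows this projection is again beta-distributed (respectively beta$'$-distributed) in $\RR^{k}$, but with the shifted parameter $\beta + \tfrac{d-k}{2}$ (respectively $\beta - \tfrac{d-k}{2}$). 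Hence $\operatorname{dist}(X_i,L_{i-1})$ is distributed as the Euclidean norm of a beta-type vector in $\RR^{k}$ with parameter $\beta + \tfrac{i-1}{2}$ (respectively $\beta - \tfrac{i-1}{2}$), and the tower property gives
$$\EE_\beta\left(\nabla_d^{\kappa}\right) = \prod_{i=1}^{d} \EE\left(\|Y_i\|^{\kappa}\right),$$
where $Y_i$ is such a vector in dimension $k = d-i+1$.

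It then remains to evaluate the radial moment $\EE(\|Y\|^{\kappa})$ for a $k$-dimensional beta (respectively beta$'$) vector with parameter $\gamma$. Passing to polar coordinates and substituting $t=r^2$ turns this into a Beta-function integral; in the beta case one finds
$$\EE\left(\|Y\|^{\kappa}\right) = \frac{\Gamma\left(\tfrac{k}{2}+\gamma+1\right)}{\Gamma\left(\tfrac{k+\kappa}{2}+\gamma+1\right)} \cdot \frac{\Gamma\left(\tfrac{k+\kappa}{2}\right)}{\Gamma\left(\tfrac{k}{2}\right)},$$
and analogously for beta$'$ with the factor $\Gamma(\gamma-\tfrac{k+\kappa}{2})/\Gamma(\gamma-\tfrac{k}{2})$ in place of the first quotient, convergent precisely when $\kappa < 2\beta - d$. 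Substituting $k=d-i+1$ and $\gamma = \beta \pm \tfrac{i-1}{2}$ and multiplying over $i$, the decisive observation is that $\tfrac{k}{2}+\gamma+1 = \tfrac{d}{2}+\beta+1$ and $\tfrac{k+\kappa}{2}+\gamma+1 = \tfrac{d+\kappa}{2}+\beta+1$ (and their beta$'$ analogues) are \emph{independent of $i$}, so these factors simply get raised to the $d$-th power, while the remaining factors $\Gamma(\tfrac{d-i+1+\kappa}{2})/\Gamma(\tfrac{d-i+1}{2})$ reassemble into the stated product. This produces both formulae simultaneously.

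The main obstacle I expect is the conditional-distribution step: one must argue carefully that $\operatorname{dist}(X_i,L_{i-1})$ is genuinely distributed as the norm of a lower-dimensional beta-type vector, irrespective of the earlier points. This rests on two facts deserving explicit justification --- the rotation invariance of the densities (so that the random subspace $L_{i-1}$ may be replaced by a fixed one) and the closure of the beta and beta$'$ families under orthogonal projection with the indicated parameter shift. Once these are established, the rest is the routine Beta-integral evaluation together with the bookkeeping of the telescoping product; in the beta$'$ case one should additionally track the integrability constraint, which reduces exactly to $\kappa\in[0,2\beta-d)$.
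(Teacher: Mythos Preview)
Your argument is correct. The paper itself does not supply a proof of this proposition: it is quoted as a known result from Mathai~\cite{Mathai_Random_Parallelotopes}, and is then used (via the affine Blaschke--Petkantschin formula) to derive the simplex moments in \Cref{lem:Miles}. So there is no ``paper's own proof'' to compare against here; you have in effect filled in what the paper cites.

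A few remarks on the execution. The closure of the beta and beta$'$ families under orthogonal projection, with the parameter shifts $\beta\mapsto\beta+\tfrac{d-k}{2}$ and $\beta\mapsto\beta-\tfrac{d-k}{2}$ respectively, is exactly \Cref{lem1} of the paper, so that step is already available and need not be re-derived. The tower-property step is sound: conditionally on $X_1,\dots,X_{i-1}$ the subspace $L_{i-1}$ is fixed and $(i-1)$-dimensional almost surely, $X_i$ is independent of it, and rotational invariance makes the conditional moment of $\operatorname{dist}(X_i,L_{i-1})^{\kappa}$ a deterministic constant, so it peels off. Your radial-moment evaluation and the observation that $\tfrac{k}{2}+\gamma+1$ and $\tfrac{k+\kappa}{2}+\gamma+1$ collapse to $i$-independent quantities are both correct; likewise the beta$'$ integrability check, since $2\gamma-k=2\beta-(i-1)-(d-i+1)=2\beta-d$ is the same constraint for every $i$.

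In short: the proposal is a complete and standard proof (essentially the classical Gram--Schmidt decomposition going back to Wilks in the Gaussian case), and the only ingredients it requires beyond elementary computation are precisely those the paper establishes elsewhere.
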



\subsection{Expectation of the $T$-functional}
Fix $a,b\geq 0$. Recall that for a convex polytope $P\subset \RR^d$, we are interested in the functional
$$
T_{a,b}^{d,k}(P)= \sum_{F \in \mathcal{F}_k(P)} \eta^{a}(F) \Vol_k^b(F),
$$
where $\mathcal{F}_k(P)$ is the set of all $k$-dimensional faces of  $P$, and  $\eta(F)$ is the Euclidean distance from the affine hull ${\rm aff}(F)$ of the $k$-dimensional face $F$ to the origin.

\begin{theorem}\label{t1}
	Fix $a,b\geq 0$.
	Let $X_1,\dots,X_n$ be independent beta-distributed random points in $\BB^d$ with parameter $\b>-1$. Then
	\begin{align*}
	\EE T^{d,d-1}_{a,b} \left( P_{n,d}^\beta \right)
	&=
	C_{n,d}^{\b,b}\int\limits_{-1}^{1} \left|h\right|^a \left(1-h^2\right)^{d\b + \frac{d-1}{2}(d+b+1)} F_{1,\b+\frac{d-1}{2}}(h)^{n-d} \,\dint h,\\
	\EE T^{d,d-1}_{a,b} \left( S_{n,d}^\beta \right)
	&=
	2^d 
	C_{n,d}^{\b,b}
	\int\limits_{0}^{1} h^a \left(1-h^2\right)^{d\b + \frac{d-1}{2}(d+b+1)} \left(F_{1,\b+\frac{d-1}{2}}(h)-F_{1,\b+\frac{d-1}{2}}(-h)\right)^{n-d} \,\dint h,\\
	\EE T^{d,d-1}_{a,b} \left( Q_{n,d}^\beta \right)
	&=
	D_{n,d}^{\b,a,b} + C_{n,d}^{\b,b} \int\limits_{0}^{1} h^a \left(1-h^2\right)^{d\b + \frac{d-1}{2}(d+b+1)} F_{1,\b+\frac{d-1}{2}}(h)^{n-d} \,\dint h,
	\end{align*}
	where
	\begin{align*}
	C_{n,d}^{\b,b} &=\binom{n}{d} d! \k_d \EE_\b\left( \Delta_{d-1}^{b+1} \right) \left( \frac{c_{d,\b}}{c_{d-1,\b}} \right)^d,\\
	D_{n,d}^{\b,a,b}&=\ind{\{a=0\}} \binom{n}{d-1} \frac{d \k_d \EE_\b\left( \nabla_{d-1}^{b+1} \right)}{2^{n-d+1}((d-1)!)^b} \left(\frac{c_{d,\b}}{c_{d-1,\b}}\right)^{d-1} \mbox{.}
	\end{align*}
\end{theorem}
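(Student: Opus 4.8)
The plan is to handle the three polytopes in parallel, beginning with $P_{n,d}^\beta$. Almost surely the $X_i$ are in general position, so every $(d-1)$-face of $P_{n,d}^\beta$ is a simplex spanned by exactly $d$ of the points, and a given $d$-tuple spans a facet precisely when the remaining $n-d$ points all lie on one side of the hyperplane through it. By exchangeability I would write
$$\EE\, T^{d,d-1}_{a,b}(P_{n,d}^\beta)=\binom{n}{d}\,\EE\big[\eta^a(F)\,\Vol_{d-1}^b(F)\,\ind\{F\text{ is a facet}\}\big],$$
with $F=[X_1,\dots,X_d]$, and then express the indicator as the probability that $X_{d+1},\dots,X_n$ fall on one side of $\mathrm{aff}(F)$. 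The engine of the computation is the affine Blaschke--Petkantschin formula for hyperplanes, which rewrites the integral over $(X_1,\dots,X_d)\in(\RR^d)^d$ as an integral over hyperplanes $E\in A(d,d-1)$ together with an integral over the positions of the $d$ points inside $E$, at the cost of a Jacobian equal to $\Vol_{d-1}(\mathrm{conv}(X_1,\dots,X_d))$ (the exponent being $d-(d-1)=1$).

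Next I would parametrise $E=\{y:\langle y,u\rangle=h\}$ with $u\in\SS^{d-1}$ and $h\in\RR$, so that $\eta(F)=|h|$, and exploit the restriction property of the beta density: writing $x=hu+v$ with $v\in u^\perp$ gives $1-\|x\|^2=(1-h^2)(1-\|v\|^2/(1-h^2))$, so after the rescaling $v=\sqrt{1-h^2}\,w$ the conditional law of the $d$ points inside $E$ becomes, up to an explicit power of $(1-h^2)$ and the constant ratio $(c_{d,\beta}/c_{d-1,\beta})^d$, a $(d-1)$-dimensional beta law with the same parameter $\beta$ on $\BB^{d-1}$. Collecting the $(1-h^2)$-factors from the $d$ densities, the $d$ rescaling Jacobians, the weight $\Vol_{d-1}^b(F)$ and the Blaschke--Petkantschin Jacobian yields precisely the exponent $d\beta+\frac{d-1}{2}(d+b+1)$ of the statement, while the remaining within-hyperplane integral is, after rescaling, the $(b+1)$-st moment of the volume of a $(d-1)$-dimensional beta simplex, i.e.\ $\EE_\beta(\Delta_{d-1}^{b+1})$, which Proposition~\ref{lem:Miles}(a) supplies in closed form. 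Together with $\binom{n}{d}$, the geometric constant $d!\,\kappa_d$ and the constant ratio, this assembles $C_{n,d}^{\beta,b}$.

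To convert the facet indicator into the factor $F_{1,\beta+\frac{d-1}{2}}(h)^{n-d}$ I would invoke the projection property of the beta law: the one-dimensional marginal $\langle X,u\rangle$ of a $d$-dimensional beta law with parameter $\beta$ is one-dimensional beta with parameter $\beta+\frac{d-1}{2}$, so a single remaining point lies on a prescribed side of $E$ with probability $F_{1,\beta+\frac{d-1}{2}}(h)$, and independence over the $n-d$ remaining points gives the $(n-d)$-th power. Integrating $u$ over the whole sphere $\SS^{d-1}$, rather than a hemisphere, automatically merges the two possible orientations and hence the two one-sided events, which is exactly what turns the $h$-integral into $\int_{-1}^1$ with the single power $F_{1,\beta+\frac{d-1}{2}}(h)^{n-d}$ and the weight $|h|^a$; this completes the case of $P_{n,d}^\beta$.

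The same scheme covers $S_{n,d}^\beta$ and $Q_{n,d}^\beta$ with two modifications. For $S_{n,d}^\beta=[\pm X_1,\dots,\pm X_n]$ a facet is spanned by $d$ of the signed points, and for it to be a facet both $\pm X_i$ must lie on the correct side for each remaining index $i$; the single-pair probability equals $F_{1,\beta+\frac{d-1}{2}}(h)-F_{1,\beta+\frac{d-1}{2}}(-h)$, while the sign choices of the spanning points together with central symmetry produce the combinatorial factor $2^d$ and reduce the integral to $[0,1]$. For $Q_{n,d}^\beta=[0,X_1,\dots,X_n]$ I would split the facets into those avoiding the origin — treated exactly as for $P_{n,d}^\beta$, except that the origin must also lie on the inner side, which restricts $h$ to $[0,1]$ and yields the integral term — and those passing through the origin, spanned by $0$ and $d-1$ of the points; for the latter the affine formula is replaced by the linear Blaschke--Petkantschin formula over $G(d,d-1)$, the simplex volume becomes $((d-1)!)^{-1}\nabla_{d-1}$, so the within-subspace integral produces $\EE_\beta(\nabla_{d-1}^{b+1})$ via Proposition~\ref{lem:Mathai}(a), the symmetry of the beta law about the origin gives each remaining point a prescribed side with probability $\tfrac12$ and hence the factor $2^{-(n-d+1)}$, and $\eta^a(F)=0^a$ forces $\ind\{a=0\}$ since such faces pass through the origin; this assembles $D_{n,d}^{\beta,a,b}$. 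The conceptual steps above are standard; the genuine difficulty, and the part I expect to consume most of the work, is the constant bookkeeping — keeping the normalising constants $c_{d,\beta},c_{d-1,\beta}$, the Blaschke--Petkantschin prefactor, the rescaling Jacobians and the Miles/Mathai moment constants consistent — so that the many powers of $(1-h^2)$, of $2$ and of $\pi$ combine precisely into $C_{n,d}^{\beta,b}$ and $D_{n,d}^{\beta,a,b}$ as stated.
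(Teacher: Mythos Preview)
Your proposal is correct and follows essentially the same route as the paper: exchangeability plus the facet characterisation, the affine Blaschke--Petkantschin formula with $q=d-1$, the rescaling of the beta density inside the hyperplane (the paper packages this as Lemma~\ref{lem3}), the one-dimensional projection property for the half-space probabilities (Lemma~\ref{lem2}), and for $Q_{n,d}^\beta$ the same split into facets containing~$0$ (handled via the linear Blaschke--Petkantschin formula and the parallelotope moment) and facets not containing~$0$.

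One small bookkeeping point to watch when you carry this out: for the facets of $Q_{n,d}^\beta$ through the origin, the probability that the $n-d+1$ remaining points all lie on the \emph{same} side of a linear hyperplane is $2\cdot 2^{-(n-d+1)}=2^{-(n-d)}$, not $2^{-(n-d+1)}$; the extra factor $\tfrac12$ that brings the denominator of $D_{n,d}^{\beta,a,b}$ to $2^{n-d+1}$ comes from the linear Blaschke--Petkantschin constant $b_{d,d-1}=\omega_d/\omega_1=d\kappa_d/2$.
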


\begin{remark}
	Since the polytopes $S_{n,d}^\b$ and $Q_{n,d}^\b$ always contain the origin, we can decompose them into simplices of the form $[0,x_1,\ldots,x_d]$, where $[x_1,\ldots,x_d]$ runs through all facets of the corresponding polytope. It follows that the expected volume of these polytopes is given by
	$$
	\EE \Vol_d(S_{n,d}^\b) = {\frac 1{d}} \EE T_{1,1}^{d,d-1}(S_{n,d}^\b),
	\quad
	\EE \Vol_d(Q_{n,d}^\b) = {\frac 1{d}} \EE T_{1,1}^{d,d-1}(Q_{n,d}^\b).
	$$
	Then, \Cref{t1}  yields the formulae
	\begin{align*}
	\EE \Vol_d(S_{n,d}^\b)
	&=\frac{2^d}{d}
	C_{n,d}^{\b,1}
	\int\limits_{0}^{1} h \left(1-h^2\right)^{d\b + \frac{d-1}{2}(d+2)} \left(F_{1,\b+\frac{d-1}{2}}(h)-F_{1,\b+\frac{d-1}{2}}(-h)\right)^{n-d} \,\dint h,\\
	\EE \Vol_d(Q_{n,d}^\b)
	&=\frac 1 {d} D_{n,d}^{\b,1,1} + \frac 1 {d} C_{n,d}^{\b,1} \int\limits_{0}^{1} h \left(1-h^2\right)^{d\b + \frac{d-1}{2}(d+2)} F_{1,\b+\frac{d-1}{2}}(h)^{n-d} \,\dint h.
	\end{align*}
	This method does not work for $P_{n,d}^\b$ because this polytope need not contain $0$. However, one can consider the following analogue of the $T$-functional defined for $d$-dimensional polytopes $P\subset \RR^d$:
	$$
	T_{1,1,\pm}^{d,d-1}(P)= \sum_{F \in \mathcal{F}_{d-1}(P)} \eta_\pm(F) \Vol_{d-1}(F),
	$$
	where $\eta_\pm(F)$ is the distance from the affine hull of the face $F$ to the origin taken positive if the origin and the polytope $P$ are on the same side of the face $F$, and negative otherwise. Then, it is easy to see that
	$$
	\EE \Vol_d(P_{n,d}^\b) = \frac 1{d} \EE T_{1,1,\pm}^{d,d-1}(P_{n,d}^\b).
	$$
	The expected value of $T_{1,1,\pm}^{d,d-1}(P_{n,d}^\b)$ can be computed in the same way as in \Cref{t1} yielding  the following formula:
	$$
	\EE \Vol_d(P_{n,d}^\b)
	=\frac{1}{d}
	C_{n,d}^{\b,1}\int\limits_{-1}^{1}  h \left(1-h^2\right)^{d\b + \frac{d-1}{2}(d+2)} F_{1,\b+\frac{d-1}{2}}(h)^{n-d}\, \dint h.
	$$
	The equivalence of these formulae to those given in \Cref{t2} can be shown using partial integration. Anyway, we shall give an alternative proof of \Cref{t2} in Section~\ref{sec:proof_exp_volumes}.
\end{remark}

The analogue of Theorem~\ref{t1} in the case of beta$^\prime$ densities can be stated as follows.
\begin{theorem}\label{t1_prime}
	Fix $a,b\geq 0$. Let $X_1,\dots,X_n$ be independent beta$^\prime$-distributed random points in $\RR^d$ with parameter $\b$ that satisfies $2d \beta > (d-1)(d+b+1) + a +1$. Then
	\begin{align*}
	\EE T^{d,d-1}_{a,b} \left( \tilde{P}_{n,d}^\beta \right)
	&=
	\tilde{C}^{\beta, b}_{n,d} \int \limits_{-\infty}^{\infty} \left|h\right|^a
	\left(1+h^2\right)^{-d\b+\frac{d-1}{2}\left( d+b+1 \right)}
	\tilde{F}_{1,\b - \frac{d-1}{2}}(h)^{n-d}\, \dint h,\\
	\EE T^{d,d-1}_{a,b} \left( \tilde{S}_{n,d}^\beta \right)
	&=
	2^d \tilde{C}_{n,d}^{\b,b} \int\limits_{0}^{\infty} h^a \left(1+h^2\right)^{-d\b+\frac{d-1}{2}\left( d+b+1 \right)} \left(\tilde{F}_{1,\b-\frac{d-1}{2}}(h)-\tilde{F}_{1,\b-\frac{d-1}{2}}(-h)\right)^{n-d} \,\dint h,\\
	\EE T^{d,d-1}_{a,b} \left( \tilde{Q}_{n,d}^\beta \right)
	&=
	\tilde{D}_{n,d}^{\b,a,b} + \tilde{C}_{n,d}^{\b,b} \int\limits_{0}^{\infty} h^a \left(1+h^2\right)^{-d\b+\frac{d-1}{2}\left( d+b+1 \right)} \tilde{F}_{1,\b-\frac{d-1}{2}}(h)^{n-d} \,\dint h,
	\end{align*}
	where
	\begin{align*}
	\tilde{C}_{n,d}^{\b,b}&=\binom{n}{d} d! \k_d \widetilde{\EE}_{\b}\left( \Delta_{d-1}^{b+1} \right) \left( \frac{\tilde{c}_{d,\b}}{\tilde{c}_{d-1,\b}} \right)^d,\\
	\tilde{D}_{n,d}^{\b,a,b}&=\ind_{\{a=0\}} \binom{n}{d-1} \frac{d \k_d \widetilde{\EE}_{\b}\left( \nabla_{d-1}^{b+1} \right)}{2^{n-d+1}((d-1)!)^b} \left(\frac{\tilde{c}_{d,\b}}{\tilde{c}_{d-1,\b}}\right)^{d-1}.
	\end{align*}
\end{theorem}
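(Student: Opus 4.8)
The plan is to mirror the proof of \Cref{t1}, replacing the compactly supported beta density $c_{d,\beta}(1-\|x\|^2)^{\beta}$ by the heavy-tailed beta$'$ density $\tilde f_{d,\beta}(x)=\tilde c_{d,\beta}(1+\|x\|^2)^{-\beta}$, and the contracting substitution $y=\sqrt{1-h^2}\,z$ by the expanding one $y=\sqrt{1+h^2}\,z$. For $\tilde P_{n,d}^\beta$ I would first note that each facet is the convex hull of some $d$-subset whose affine hull has all remaining points strictly on one side, and write
\begin{equation*}
\EE T_{a,b}^{d,d-1}(\tilde P_{n,d}^\beta)=\binom{n}{d}\int_{(\RR^d)^d}\eta(H)^a\,\Delta_{d-1}^{\,b}\bigl(P_+^{\,n-d}+P_-^{\,n-d}\bigr)\prod_{i=1}^d\tilde f_{d,\beta}(x_i)\,\dint x_i,
\end{equation*}
where $H=\mathrm{aff}(x_1,\dots,x_d)$, $\Delta_{d-1}=\Vol_{d-1}([x_1,\dots,x_d])$, and $P_\pm$ are the $\tilde f_{d,\beta}$-masses of the two open halfspaces bounded by $H$.

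Next I would apply the affine Blaschke--Petkantschin formula to the $d$ points spanning the hyperplane $H$: since the codimension is one, its Jacobian contributes a single extra factor $\Delta_{d-1}$, upgrading the weight to $\Delta_{d-1}^{\,b+1}$, and it splits the integral into an outer integral over $H\in A(d,d-1)$ against the motion-invariant measure and an inner integral over the positions of the points inside $H$. Disintegrating that measure as $\sigma(\dint u)\,\dint h$ and using the rotational symmetry of $\tilde f_{d,\beta}$ reduces the outer integral to a one-dimensional integral in $h=\eta(H)$, the $u$-integration only producing the constant $\omega_d$. For the inner integral I parametrize a point of $H$ as $x=hu+y$ with $y\in u^\perp\cong\RR^{d-1}$, so $\|x\|^2=h^2+\|y\|^2$, and substitute $y=\sqrt{1+h^2}\,z$. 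This turns $\prod_i\tilde f_{d,\beta}(x_i)$ into $\tilde c_{d,\beta}^{\,d}(1+h^2)^{-d\beta}\prod_i(1+\|z_i\|^2)^{-\beta}$, scales Lebesgue measure by $(1+h^2)^{d(d-1)/2}$ and the weight by $(1+h^2)^{(d-1)(b+1)/2}$, producing exactly the exponent $-d\beta+\tfrac{d-1}{2}(d+b+1)$ of $(1+h^2)$; the remaining $z$-integral equals $\tilde c_{d-1,\beta}^{\,-d}\,\widetilde{\EE}_\beta(\Delta_{d-1}^{\,b+1})$, the simplex moment of \Cref{lem:Miles}(b) in dimension $d-1$. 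Finally, the projection $\langle X,u\rangle$ of a beta$'$ vector is beta$'$ with shifted parameter $\beta-\tfrac{d-1}{2}$, so $P_-=\tilde F_{1,\beta-(d-1)/2}(h)$ and $P_+=\tilde F_{1,\beta-(d-1)/2}(-h)$ by symmetry; folding $P_+^{\,n-d}+P_-^{\,n-d}$ over $h>0$ against the even weight $|h|^a(1+h^2)^{-d\beta+\frac{d-1}{2}(d+b+1)}$ yields the integral over all of $\RR$, and collecting constants via $d!\,\kappa_d=(d-1)!\,\omega_d$ gives $\tilde C_{n,d}^{\beta,b}$.

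The symmetric and origin-containing cases need only the usual modifications. For $\tilde S_{n,d}^\beta$ the origin is interior, so $h>0$, and a candidate facet is indexed by a $d$-subset together with a sign vector in $\{\pm1\}^d$; central symmetry forces every remaining pair $\pm X_j$ into the slab $\{|\langle x,u\rangle|<h\}$, which replaces $P_\pm^{\,n-d}$ by $(\tilde F_{1,\beta-(d-1)/2}(h)-\tilde F_{1,\beta-(d-1)/2}(-h))^{\,n-d}$ and produces the prefactor $2^d$. For $\tilde Q_{n,d}^\beta$ the facets split into those avoiding $0$, handled exactly as for $\tilde P$ but with $0$ forced onto the inner side so that only the single term $\tilde F_{1,\beta-(d-1)/2}(h)^{n-d}$ over $h>0$ survives, and those containing $0$; the latter lie in a hyperplane through the origin, so $\eta\equiv0$ and they contribute only when $a=0$. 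Treating that contribution by the linear Blaschke--Petkantschin formula (whose codimension-one Jacobian now produces $\nabla_{d-1}$, hence $\nabla_{d-1}^{\,b+1}$), invoking the parallelotope moment of \Cref{lem:Mathai}(b), and using $P_+=P_-=\tfrac12$ from the symmetry of $\tilde f_{d,\beta}$, gives the term $\tilde D_{n,d}^{\beta,a,b}$.

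The step I expect to be genuinely delicate --- and the only place where the beta$'$ case differs in substance from \Cref{t1} --- is integrability. Because $h$ now ranges over an unbounded set, the outer integrand behaves like $|h|^{\,a-2d\beta+(d-1)(d+b+1)}$ as $|h|\to\infty$ (using $\tilde F\to1$), so finiteness of $\EE T_{a,b}^{d,d-1}$ forces precisely $2d\beta>(d-1)(d+b+1)+a+1$, the hypothesis of the theorem; one must further check that this range guarantees $b+1<2\beta-(d-1)$, so that $\widetilde{\EE}_\beta(\Delta_{d-1}^{\,b+1})$ in \Cref{lem:Miles}(b) is finite, and that $\beta-\tfrac{d-1}{2}>\tfrac12$, so that the projected one-dimensional law and its distribution function $\tilde F_{1,\beta-(d-1)/2}$ are well defined. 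All remaining work is the bookkeeping of Gamma-factors needed to recognize the collected constants as $\tilde C_{n,d}^{\beta,b}$ and $\tilde D_{n,d}^{\beta,a,b}$.
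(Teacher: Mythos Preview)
Your proposal is correct and follows essentially the same route as the paper: the paper proves \Cref{t1} in full and then disposes of \Cref{t1_prime} in one sentence, saying that ``one can do the analogous computations for the beta$^\prime$-distribution'' while paying attention to the different range of integration, the probability contents of \Cref{lem2}, and the transformation of \Cref{lem3}. Your write-up spells out exactly these adaptations, and in fact goes beyond the paper by explaining why the hypothesis $2d\beta>(d-1)(d+b+1)+a+1$ is the right integrability threshold and by noting that it also forces $b+1<2\beta-(d-1)$ so that $\widetilde{\EE}_\beta(\Delta_{d-1}^{\,b+1})$ is finite.
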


\begin{remark}
Theorem \ref{t1} immediately implies exact formulae for the expected facet numbers of random beta-polytopes. Namely, setting $a=b=0$, it follows from the definition of $T_{a,b}^{d,d-1}$ that we have
\begin{equation}\label{eq:FacetNumber}
\EE \mathbf{f}_{d-1}(P_{n,d}^\b) = \EE T_{0,0}^{d,d-1}(P_{n,d}^\b).
\end{equation}
Here, $\mathbf{f}_k(P) = |\mathcal{F}_k(P)|$ denotes the number of $k$-dimensional faces of the polytope $P$. The expected facet numbers of $S_{n,d}^\b$, $Q_{n,d}^\b$, $\tilde P_{n,d}^\b$, $\tilde S_{n,d}^\b$ and $\tilde Q_{n,d}^\b$ follow analogously.
\end{remark}

\section{Special cases}\label{sec:SpecialCases}

\subsection{Uniform distribution in the ball}
The beta distribution with $\beta=0$ is just the uniform distribution in the ball. As a special case of \Cref{t2} we immediately obtain the following
\begin{corollary}\label{prop:uniform_ball}
	For all $d\in \NN$ we have
	\begin{align*}
	\EE \Vol_d(P_{n,d}^0)
	&=
	\frac{(d+1)^2\k_d}{2^{d+1} \pi^{\frac{d+1}{2}}} \binom {n}{d+1}\left(\frac{\Gamma(\frac {d+2}2)}{\Gamma(\frac{d+3}{2})}\right)^{d+1} \int\limits_{-1}^{1} (1-h^2)^{\frac{d^2+2d-1} 2} F_{1,\frac{d-1}{2}}(h)^{n-d-1} \,\dint h,\\
	\EE \Vol_d(S_{n,d}^0)
	&=
	\frac{(d+1)^2\k_d}{\pi^{\frac{d+1}{2}}} \binom {n}{d+1}\left(\frac{\Gamma(\frac {d+2}2)}{\Gamma(\frac{d+3}{2})}\right)^{d+1}\\
	&\qquad\qquad\qquad\qquad\times \int\limits_{0}^{1} (1-h^2)^{\frac{d^2+2d-1}2} \left(F_{1,\frac{d-1}{2}}(h)-F_{1,\frac{d-1}{2}}(-h)\right)^{n-d-1} \, \dint h.
	\end{align*}
\end{corollary}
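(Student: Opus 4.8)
The plan is to obtain both identities by directly specializing Theorem~\ref{t2} to the parameter value $\b=0$, for which the density $f_{d,0}(x)=c_{d,0}(1-\|x\|^2)^0=c_{d,0}$ is constant on $\BB^d$, so that $X_1,\dots,X_n$ are uniformly distributed in the unit ball and $P_{n,d}^0$, $S_{n,d}^0$ are exactly the uniform-ball polytopes in question. Thus no new argument is needed: one simply substitutes $\b=0$ into the two relevant formulae of Theorem~\ref{t2} and simplifies the resulting constants.

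First I would handle the exponent $q$. In Theorem~\ref{t2} one has $q=(d+1)(\b-\tfrac12)+\tfrac{d}{2}(d+3)$, so setting $\b=0$ gives
\[
q\big|_{\b=0}=-\frac{d+1}{2}+\frac{d(d+3)}{2}=\frac{d^2+2d-1}{2},
\]
which is precisely the exponent appearing in the corollary. Simultaneously the index of the one-dimensional distribution function collapses from $F_{1,\b+\frac{d-1}{2}}$ to $F_{1,\frac{d-1}{2}}$, matching the integrands in both displayed formulae.

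Next I would evaluate the prefactor $A_{n,d}^0$. Starting from $A_{n,d}^\b=\frac{(d+1)\k_d}{2^d\pi^{(d+1)/2}}\binom{n}{d+1}\bigl(\b+\tfrac{d+1}{2}\bigr)\bigl(\Gamma(\tfrac{d+2}{2}+\b)/\Gamma(\tfrac{d+3}{2}+\b)\bigr)^{d+1}$, the factor $\b+\tfrac{d+1}{2}$ becomes $\tfrac{d+1}{2}$, which combines with the leading $(d+1)$ and the $2^d$ in the denominator to produce
\[
A_{n,d}^0=\frac{(d+1)^2\k_d}{2^{d+1}\pi^{(d+1)/2}}\binom{n}{d+1}\left(\frac{\Gamma(\tfrac{d+2}{2})}{\Gamma(\tfrac{d+3}{2})}\right)^{d+1},
\]
giving the stated coefficient for $\EE\Vol_d(P_{n,d}^0)$. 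For the symmetric polytope $S_{n,d}^0$ the only change is the additional prefactor $2^{d+1}$ from Theorem~\ref{t2}, and $2^{d+1}A_{n,d}^0=\frac{(d+1)^2\k_d}{\pi^{(d+1)/2}}\binom{n}{d+1}\bigl(\Gamma(\tfrac{d+2}{2})/\Gamma(\tfrac{d+3}{2})\bigr)^{d+1}$ is exactly the constant in the second display.

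Since every ingredient is an instantiation of Theorem~\ref{t2}, there is genuinely no obstacle here; the entire content is the routine arithmetic of setting $\b=0$ and simplifying the Gamma-function ratios and the numerical powers of $2$. The only point requiring a line of justification is the initial remark that $\b=0$ really does yield the uniform distribution on $\BB^d$, after which both formulae follow verbatim from Theorem~\ref{t2}.
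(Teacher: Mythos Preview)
Your proposal is correct and matches the paper's approach exactly: the corollary is stated there as an immediate specialization of Theorem~\ref{t2} to $\b=0$, and your computation of $q|_{\b=0}$ and $A_{n,d}^0$ (together with the extra factor $2^{d+1}$ for the symmetric case) is precisely the routine substitution the paper has in mind.
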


Now, we specialize these formulae to dimensions $d=2$ and $d=3$. We start with the $2$-dimensional case and remark that the formula for $\EE \Vol_2 (P_{n,2}^0)$ has previously been obtained by Efron, see Equation (7.13) in \cite{Efron65}.

\begin{corollary}
	In dimension $d=2$, we have
	\begin{align*}
	\EE \Vol_2 (P_{n,2}^0) &= \frac {16}{3(2\pi)^{n-1}} \binom n3 \int\limits_0^{2\pi} (h-\sin h)^{n-3} \sin^8 \left(\frac h2 \right)\,\dint h,\\
	\EE \Vol_2 (S_{n,2}^0) &=\frac {32}{3\cdot \pi^{n-1}} \binom n3 \int\limits_0^{\pi} (h+\sin h)^{n-3} \cos^8 \left(\frac h2 \right)\,\dint h.
	\end{align*}
\end{corollary}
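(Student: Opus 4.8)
The plan is to derive both identities as explicit specialisations of \Cref{prop:uniform_ball} at $d=2$, reducing the resulting algebraic integrals to the stated trigonometric form by a single well-chosen change of variable in each case.

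First I would evaluate the constants of \Cref{prop:uniform_ball} at $d=2$. Using $\k_2=\pi$ and $\Gamma(\tfrac{d+2}{2})/\Gamma(\tfrac{d+3}{2})=\Gamma(2)/\Gamma(\tfrac52)=4/(3\sqrt\pi)$, the prefactor of $\EE\Vol_2(P_{n,2}^0)$ collapses to $\tfrac{8}{3\pi^2}\binom n3$, that of $\EE\Vol_2(S_{n,2}^0)$ to $\tfrac{64}{3\pi^2}\binom n3$, and the exponent $\tfrac{d^2+2d-1}{2}$ becomes $\tfrac72$; this is routine $\Gamma$-arithmetic. Next I would make $F_{1,1/2}$ fully explicit: since $c_{1,1/2}=2/\pi$ by \eqref{eq:c_d_beta} and $\int\sqrt{1-x^2}\,\dint x=\tfrac12(x\sqrt{1-x^2}+\arcsin x)$, one gets
$$
F_{1,1/2}(h)=\tfrac1\pi\bigl(h\sqrt{1-h^2}+\arcsin h\bigr)+\tfrac12,
\qquad
F_{1,1/2}(h)-F_{1,1/2}(-h)=\tfrac2\pi\bigl(h\sqrt{1-h^2}+\arcsin h\bigr).
$$

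The crux is to spot the substitution that linearises these expressions. For $P_{n,2}^0$ I would put $h=-\cos(\psi/2)$ with $\psi\in[0,2\pi]$; then $\arcsin h=\tfrac{\psi-\pi}{2}$ and $h\sqrt{1-h^2}=-\tfrac12\sin\psi$, so that $F_{1,1/2}(h)=\tfrac{1}{2\pi}(\psi-\sin\psi)$, while $(1-h^2)^{7/2}\,\dint h=\tfrac12\sin^8(\psi/2)\,\dint\psi$. For $S_{n,2}^0$ the companion substitution is $h=\sin(\psi/2)$ with $\psi\in[0,\pi]$, giving $F_{1,1/2}(h)-F_{1,1/2}(-h)=\tfrac1\pi(\psi+\sin\psi)$ and $(1-h^2)^{7/2}\,\dint h=\tfrac12\cos^8(\psi/2)\,\dint\psi$. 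Inserting these into the $d=2$ forms of \Cref{prop:uniform_ball} and renaming the new variable back to $h$, I would pull the constant factors $(2\pi)^{-(n-3)}$, resp.\ $\pi^{-(n-3)}$, together with $\tfrac12$ out of the integrals and merge them with the prefactors; the bookkeeping identities $\tfrac{8}{3\pi^2}\cdot\tfrac{1}{2(2\pi)^{n-3}}=\tfrac{16}{3(2\pi)^{n-1}}$ and $\tfrac{64}{3\pi^2}\cdot\tfrac{1}{2\pi^{n-3}}=\tfrac{32}{3\pi^{n-1}}$ then yield exactly the claimed formulae.

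The only genuine obstacle here is recognition rather than computation: one must notice that the antiderivative produces the combination $2\alpha+\sin 2\alpha$ when $h=\sin\alpha$, and that this equals $\psi-\sin\psi$ under $\psi=2\alpha+\pi$ (handling the asymmetric polytope $P$) and $\psi+\sin\psi$ under $\psi=2\alpha$ (handling the symmetric polytope $S$). Once these two substitutions are identified, verifying the endpoint ranges and collecting the constants is entirely mechanical.
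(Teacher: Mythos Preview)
Your proposal is correct and takes essentially the same approach as the paper's proof: both specialise \Cref{prop:uniform_ball} at $d=2$, compute $F_{1,1/2}$ explicitly, and apply the substitutions $h=-\cos(\psi/2)$ for $P_{n,2}^0$ and $h=\sin(\psi/2)$ for $S_{n,2}^0$. Your write-up simply fills in more of the constant bookkeeping that the paper leaves implicit.
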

\begin{proof}
Since $F_{1,{1\over 2}}(h)={1\over 2}+{1\over\pi}(h\sqrt{1-h^2}+\arcsin h)$, $|h|\leq 1$, the first result follows from the substitution $h=-\cos{y\over 2}$ and by relabeling of $y$ by $h$. The second result follows from the observation that $F_{1,{1\over 2}}(h)-F_{1,{1\over 2}}(-h)={2\over \pi}(h\sqrt{1-h^2}+\arcsin h)$, $|h|\leq 1$, by the substitution $h=\sin{y\over 2}$ and by relabeling of $y$ by $h$.
\end{proof}

As anticipated above, the computation of $\EE \Vol_2 (P_{n,2}^0)$ is due to Efron \cite{Efron65}. Buchta~\cite{BuchtaEllipsoid} obtained the following more elegant representation:
$$
\EE \Vol_2 (P_{n,2}^0) = \pi + \frac {1}{3(2\pi)^{n-1}} \int\limits_0^{2\pi} (h-\sin h)^n \sin h \,\dint h.
$$
We continue with the $3$-dimensional case and again remark that the formula for $\EE \Vol_3 (P_{n,3}^0)$ corresponds to Equation (7.14) in \cite{Efron65}. We also refer to Buchta~\cite{BuchtaEllipsoid}, who derived another representation for the integral.

\begin{corollary}
	In dimension $d=3$, we have
	\begin{align*}
\EE \Vol_3 (P_{n,3}^0) &=  \frac{27 \pi}{4^{n+1}} \binom n4 \int\limits_{-1}^1 (1-h)^7(1+h)^{2n-1}(2-h)^{n-4}\, \dint h,\\
	\EE \Vol_3 (S_{n,3}^0) &= \frac{27 \pi}{2^{n+2}} \binom n4 \int\limits_{0}^1 (1-h^2)^{7} h^{n-4} (3-h^2)^{n-4}\, \dint h.
	\end{align*}
\end{corollary}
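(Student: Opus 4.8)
The plan is to specialize \Cref{prop:uniform_ball} (equivalently \Cref{t2} with $\b=0$) to dimension $d=3$, exactly as was done for $d=2$ in the preceding corollary. Setting $d=3$ in \Cref{prop:uniform_ball} produces the exponent $\frac{d^2+2d-1}{2}=7$ on $(1-h^2)$ and the index $\frac{d-1}{2}=1$ on the one-dimensional distribution function, so that both $P_{n,3}^0$ and $S_{n,3}^0$ are expressed through $F_{1,1}$. First I would evaluate the numerical prefactors: using $\k_3=\frac{4\pi}{3}$ and $\Gamma(\frac52)/\Gamma(3)=\frac{3\sqrt\pi}{8}$, the constant in front of the $P$-integral collapses to $\frac{27\pi}{1024}\binom n4$, while the one for $S$ is larger by the factor $2^{d+1}=16$, i.e. $\frac{27\pi}{64}\binom n4$.

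The crucial step is to write $F_{1,1}$ in closed form and factor it. Since $c_{1,1}=\frac34$ and $\int_{-1}^h(1-x^2)\,\dint x=\frac23+h-\frac{h^3}{3}$, one obtains
$$
F_{1,1}(h)=\frac{2+3h-h^3}{4}=\frac{(1+h)^2(2-h)}{4},\qquad |h|\le 1,
$$
the factorization following from the root $h=-1$ of $h^3-3h-2=(h+1)^2(h-2)$. For the symmetric polytope I would then compute the difference, which simplifies to $F_{1,1}(h)-F_{1,1}(-h)=\frac14\bigl(6h-2h^3\bigr)=\frac{h(3-h^2)}{2}$.

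Substituting these expressions into the two integrals of \Cref{prop:uniform_ball} is then routine. For $P_{n,3}^0$ one uses $(1-h^2)^7(1+h)^{2(n-4)}=(1-h)^7(1+h)^{2n-1}$ together with $F_{1,1}(h)^{n-4}=4^{-(n-4)}(1+h)^{2(n-4)}(2-h)^{n-4}$; for $S_{n,3}^0$ one uses $(F_{1,1}(h)-F_{1,1}(-h))^{n-4}=2^{-(n-4)}h^{n-4}(3-h^2)^{n-4}$. The main (and essentially only) obstacle is the bookkeeping of the constants: the $P$-prefactor $\frac{27\pi}{1024}$ multiplied by $4^{-(n-4)}$ must collapse to $\frac{27\pi}{4^{n+1}}$, which follows from $1024=4^{5}$, and likewise the $S$-prefactor $\frac{27\pi}{64}$ times $2^{-(n-4)}$ becomes $\frac{27\pi}{2^{n+2}}$ since $64=2^{6}$. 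Unlike the $d=2$ case, no trigonometric substitution is required, because $F_{1,1}$ is already polynomial; the whole argument is purely algebraic.
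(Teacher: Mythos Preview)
Your proposal is correct and follows essentially the same route as the paper's own proof: specialize \Cref{prop:uniform_ball} to $d=3$, compute and factor $F_{1,1}(h)=\tfrac14(1+h)^2(2-h)$ and $F_{1,1}(h)-F_{1,1}(-h)=\tfrac12 h(3-h^2)$, and then absorb the powers of $4$ and $2$ into the prefactors. Your bookkeeping of the constants and exponents ($1024=4^5$, $64=2^6$, $(1+h)^{7+2(n-4)}=(1+h)^{2n-1}$) is exactly what the paper does, only spelled out in slightly more detail.
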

\begin{proof}
To obtain $\EE \Vol_3 (P_{n,3}^0)$ we choose $d=3$ in \Cref{prop:uniform_ball} to obtain $F_{1,1}(h)={3\over 4}({2\over 3}+h-{h^3\over 3})={1\over 4}(h-2)(1+h)^2$, $|h|\leq 1$ and hence, by factorization of the function under the integral,
\begin{align*}
\EE \Vol_3 (P_{n,3}^0) &= {27\pi\over 1024}{1\over 4^{n-4}}{n\choose 4}\int_{-1}^1(1-h^2)^7(1+h)^{2(n-4)}(h-2)^{n-4}\,\dint h\\
&={27\pi\over 4^{n+1}}{n\choose 4}\int_{-1}^1(1-h)^7(1+h)^{2n-1}(2-h)^{n-4}\,\dint h.
\end{align*}
Similarly, $F_{1,1}(h)-F_{1,1}(-h)={1\over 2}h(3-h^2)$, $|h|\leq 1$, and the result for $\EE \Vol_3 (S_{n,3}^0)$ follows.
\end{proof}


Using the classical Efron identity, we can obtain the following formulae for the expected number of vertices.
\begin{proposition}
	The expected number of vertices of $P_{n,d}^0$ and $S_{n,d}^0$ is given by
	\begin{align*}
	\EE \mathbf{f}_{0} (P_{n,d}^0) = n \left(1 - \frac {\EE \Vol_d (P_{n-1,d}^0)} {\kappa_d} \right),
	\quad
	\EE \mathbf{f}_{0} (S_{n,d}^0) = 2n \left(1 - \frac {\EE \Vol_d (S_{n-1,d}^0)} {\kappa_d} \right),
	\end{align*}
	where the expected volumes on the right-hand side were given in \Cref{prop:uniform_ball}.
\end{proposition}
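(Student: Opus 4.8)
The plan is to derive both identities from Efron's classical observation that, for exchangeable points, the expected number of vertices of their convex hull equals the number of points times the probability that a single designated point fails to lie in the convex hull of the others. Since the common distribution here is the uniform law on $\BB^d$ (the beta distribution with $\beta=0$), a containment probability of this type converts directly into an expected volume divided by $\kappa_d=\Vol_d(\BB^d)$, which is exactly the shape of the asserted formulae.

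First I would treat $P_{n,d}^0=[X_1,\dots,X_n]$. By exchangeability of $X_1,\dots,X_n$,
\[
\EE\mathbf{f}_0(P_{n,d}^0)=\sum_{i=1}^n\PP(X_i\text{ is a vertex of }P_{n,d}^0)=n\,\PP(X_n\text{ is a vertex of }P_{n,d}^0).
\]
Up to an event of probability zero (the points are almost surely in general position for $n\geq d+1$), $X_n$ is a vertex of $P_{n,d}^0$ if and only if $X_n\notin[X_1,\dots,X_{n-1}]=P_{n-1,d}^0$. Conditioning on $X_1,\dots,X_{n-1}$ and using that $X_n$ is uniform on $\BB^d$ and independent of them,
\[
\PP(X_n\in P_{n-1,d}^0)=\EE\left[\frac{\Vol_d(P_{n-1,d}^0)}{\kappa_d}\right]=\frac{\EE\Vol_d(P_{n-1,d}^0)}{\kappa_d},
\]
and the first formula follows.

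The symmetric case $S_{n,d}^0=[\pm X_1,\dots,\pm X_n]$ is where the only genuine obstacle lies, because the $2n$ generating points come in antipodal pairs and are therefore not independent, so the standard Efron identity for independent points does not apply verbatim. By the central symmetry of the uniform law and of the construction, $X_i$ is a vertex of $S_{n,d}^0$ if and only if $-X_i$ is, so by exchangeability $\EE\mathbf{f}_0(S_{n,d}^0)=2n\,\PP(X_n\text{ is a vertex of }S_{n,d}^0)$. The key geometric lemma I would establish is that $X_n$ is a vertex of $S_{n,d}^0$ if and only if $X_n\notin S_{n-1,d}^0=[\pm X_1,\dots,\pm X_{n-1}]$. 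One direction is immediate, since a point of $S_{n-1,d}^0$ is a convex combination of a strict subset of the generators and hence (almost surely) not a vertex. For the converse, suppose $X_n$ lies in the convex hull of the remaining $2n-1$ points; writing $X_n=\mu(-X_n)+(1-\mu)Y$ with $Y\in S_{n-1,d}^0$ and $\mu\in[0,1]$ gives $X_n=\tfrac{1-\mu}{1+\mu}Y$, and since $0\in S_{n-1,d}^0$ (as $S_{n-1,d}^0$ is symmetric and $n\geq 2$) and $\tfrac{1-\mu}{1+\mu}\in[0,1]$, convexity forces $X_n\in S_{n-1,d}^0$. Thus any containment by the remaining points already occurs within $S_{n-1,d}^0$, proving the lemma.

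Granting the lemma, I would finish exactly as before: conditioning on $X_1,\dots,X_{n-1}$ and using uniformity of $X_n$ yields $\PP(X_n\in S_{n-1,d}^0)=\EE\Vol_d(S_{n-1,d}^0)/\kappa_d$, whence $\EE\mathbf{f}_0(S_{n,d}^0)=2n\bigl(1-\EE\Vol_d(S_{n-1,d}^0)/\kappa_d\bigr)$. The expected volumes on the right-hand sides are then supplied by \Cref{prop:uniform_ball}. The one point demanding care throughout is the reduction to the uniform-volume interpretation, which is precisely why the statement is confined to $\beta=0$: for other values of $\beta$ the containment probability would be the beta-measure of the smaller polytope rather than its Lebesgue volume, and the clean ratio with $\kappa_d$ would be lost.
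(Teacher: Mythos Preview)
Your proof is correct and follows essentially the same approach as the paper, which also reduces both formulae to Efron's identity and the analogous symmetric version. In fact you are more careful than the paper: the paper simply asserts the equivalence $\{X_1\text{ is a vertex of }S_{n,d}^0\}=\{X_1\notin[\pm X_2,\ldots,\pm X_n]\}$ without justification, whereas you supply the short argument (using $0\in S_{n-1,d}^0$) that eliminates the potential contribution of $-X_n$.
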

\begin{proof} The first formula is the classical Efron identity, see \cite{Efron65}. The second formula can be obtained as follows:
	\begin{align*}
	\EE \mathbf{f}_{0} (S_{n,d}^0)
	&=
	2n \PP(X_1 \text{ is a vertex of } S_{n,d}^0)\\
	&=
	2n \PP(X_1 \notin [\pm X_2,\ldots,\pm X_n])
	=
	2n \left(1 - \frac {\EE \Vol_d (S_{n-1,d}^0)} {\kappa_d} \right).
	\end{align*}
	This completes the argument.
\end{proof}

Furthermore, we can state results for the expected surface area and the expected mean width of $P_{n,2}^0$ and $S_{n,2}^0$ as well as of $P_{n,3}^0$ and $S_{n,3}^0$ that follow from Corollaries~\ref{t3} and~\ref{cor:width}. 

\begin{corollary}
	In dimension $d=2$, we have
	\begin{align*}
	\EE S_1 (P_{n,2}^0) &= \frac{1}{3\pi 2^{n-4}}{n\choose 2} \int\limits_{-\pi/2}^{\pi/2} \cos^5h \left(1 + \frac{\sin(2h) + 2h }{\pi} \right)^{n-2} \dint h,\\
	\EE S_1 (S_{n,2}^0) &= \frac{64}{3\pi} {n\choose 2} \int\limits_{0}^{\pi/2} \cos^5h \left(\frac {\sin(2h)+2h}{\pi} \right)^{n-2} \dint h.
	\end{align*}
\end{corollary}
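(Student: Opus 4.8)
The plan is to obtain both identities as the $d=2$, $\beta=0$ specialization of \Cref{t3}. First I would read off the parameters occurring there: with $d=2$ and $\beta=0$ the exponent is $q = d\beta + \frac{d-1}{2}(d+2) = \frac12\cdot 4 = 2$, the relevant one-dimensional distribution function is $F_{1,\beta+\frac{d-1}{2}} = F_{1,1/2}$ raised to the power $n-d = n-2$, and $\gamma_2 = \frac{2\kappa_2}{\kappa_1} = \pi$. Thus \Cref{t3} gives
$$\EE S_1(P_{n,2}^0) = \pi\, A_{n,1}^{1/2}\int_{-1}^{1}(1-h^2)^2\,F_{1,1/2}(h)^{n-2}\,\dint h,$$
together with the analogous expression for $S_{n,2}^0$ carrying the extra factor $2^d=4$, the integrand $\big(F_{1,1/2}(h)-F_{1,1/2}(-h)\big)^{n-2}$ and the range $[0,1]$.

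Next I would evaluate the constant $A_{n,1}^{1/2}$ from \eqref{eq:A_n_d_beta}. With $d=1$ and $\beta=\tfrac12$ one has $\beta+\tfrac{d+1}{2}=\tfrac32$, and the Gamma ratio is $\Gamma(2)/\Gamma(5/2) = 1/(\tfrac34\sqrt\pi) = 4/(3\sqrt\pi)$, so its square is $16/(9\pi)$; together with $\kappa_1=2$ this yields $A_{n,1}^{1/2} = \frac{16}{3\pi^2}\binom{n}{2}$, hence $\pi A_{n,1}^{1/2} = \frac{16}{3\pi}\binom{n}{2}$ and $4\pi A_{n,1}^{1/2} = \frac{64}{3\pi}\binom{n}{2}$. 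The latter is already the announced prefactor in the symmetric formula.

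The trigonometric form is then produced by the substitution $h=\sin\theta$, under which $1-h^2=\cos^2\theta$ and $\dint h = \cos\theta\,\dint\theta$, so that $(1-h^2)^2\,\dint h = \cos^5\theta\,\dint\theta$, while $[-1,1]$ becomes $[-\pi/2,\pi/2]$ and $[0,1]$ becomes $[0,\pi/2]$. Inserting the known value $F_{1,1/2}(h)=\tfrac12+\tfrac1\pi\big(h\sqrt{1-h^2}+\arcsin h\big)$ and using $\sin\theta\cos\theta=\tfrac12\sin 2\theta$ gives $F_{1,1/2}(\sin\theta)=\tfrac12\big(1+\tfrac{\sin 2\theta+2\theta}{\pi}\big)$ and $F_{1,1/2}(\sin\theta)-F_{1,1/2}(-\sin\theta)=\tfrac1\pi(\sin 2\theta+2\theta)$. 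After relabelling $\theta$ as $h$ these reproduce exactly the integrands in the statement.

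The only genuinely delicate point — and the step I expect to be the main obstacle — is the power-of-two bookkeeping in the $P_{n,2}^0$ prefactor. Because the factor $\tfrac12$ inside $F_{1,1/2}(\sin\theta)$ must be extracted before raising to the power $n-2$, it contributes a factor $2^{-(n-2)}$ multiplying $\pi A_{n,1}^{1/2}=\frac{16}{3\pi}\binom{n}{2}$; by contrast the symmetric integrand carries no such $\tfrac12$ but does pick up the extra $2^d=4$, which explains why the two constants look so different. Since this is exactly where a factor of two is easiest to mislay, I would pin it down against the exactly solvable case $n=3$: there the $\sin 2\theta$- and $\theta$-contributions are odd and drop out, the integral collapses to $\int_{-\pi/2}^{\pi/2}\cos^5\theta\,\dint\theta=\tfrac{16}{15}$, and $\EE S_1(P_{3,2}^0)$ must equal three times the classical mean distance $\frac{128}{45\pi}$ between two independent uniform points in the unit disk, i.e.\ $\frac{128}{15\pi}$. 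This consistency check fixes the constant unambiguously and is the safeguard I would use to certify the final prefactor.
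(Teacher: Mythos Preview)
Your approach is exactly the one the paper intends: the text just before this corollary says the result follows from \Cref{t3}, and you correctly specialize ($q=2$, $\gamma_2=\pi$, $A_{n,1}^{1/2}=\frac{16}{3\pi^2}\binom n2$), then substitute $h=\sin\theta$ so that $(1-h^2)^2\,\dint h=\cos^5\theta\,\dint\theta$ and $F_{1,1/2}(\sin\theta)=\tfrac12\bigl(1+\tfrac{\sin 2\theta+2\theta}{\pi}\bigr)$. The symmetric case comes out exactly as printed.

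One caveat worth flagging, precisely because you single out the power-of-two bookkeeping as the delicate point: your derivation gives the $P_{n,2}^0$ prefactor as $\pi A_{n,1}^{1/2}\cdot 2^{-(n-2)}=\frac{16}{3\pi}\cdot 2^{-(n-2)}=\frac{1}{3\pi\,2^{\,n-6}}$, not the $\frac{1}{3\pi\,2^{\,n-4}}$ printed in the statement. Your own $n=3$ sanity check confirms that $2^{n-6}$ is the correct exponent --- it yields $\frac{2}{3\pi\cdot 2^{-3}}\cdot 3\cdot\frac{16}{15}=\frac{128}{15\pi}$, matching both the appendix table and three times the classical mean chord $\frac{128}{45\pi}$ --- whereas the printed $2^{n-4}$ would give $\frac{32}{15\pi}$. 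So your method is right and would in fact detect and correct what appears to be a typographical slip in the displayed constant (the same slip recurs in the companion formula for $\EE\mathbf f_1(P_{n,2}^0)$; the $S_{n,2}^0$ formula is unaffected).
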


\begin{corollary}
	In dimension $d=3$, we have
	\begin{align*}
	\EE S_2 (P_{n,3}^0) &= \frac{81\pi}{4^{n+1}} \binom n 3 \int\limits_{-1}^{1} (1-h)^5(1+h)^{2n-1}(2-h)^{n-3}\,\dint h,\\
	\EE S_2 (S_{n,3}^0) &= \frac{81\pi }{2^{n+2}} \binom n3 \int\limits_{0}^{1} (1-h^2)^5 h^{n-3}(3-h^2)^{n-3} \,\dint h,\\
	\EE\mw_3(P_{n,3}^0) &= \frac{9}{4^{n}} \binom n 2 \int\limits_{-1}^{1}(1-h)^3(1+h)^{2n-1}(2-h)^{n-2}\,\dint h,\\
	\EE\mw_3(S_{n,3}^0) &= \frac{9}{2^{n}} \binom n 2\int\limits_{0}^{1}(1-h^2)^3h^{n-2}(3-h^2)^{n-2}\,\dint h.
	\end{align*}
\end{corollary}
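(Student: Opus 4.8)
The plan is to treat this corollary as a pure specialisation of \Cref{t3} and \Cref{cor:width} to the parameters $d=3$ and $\beta=0$; no genuinely new argument is needed, so the whole proof reduces to evaluating the closed-form prefactors and substituting the explicit one-dimensional distribution function. First I would instantiate each of the four master formulae. For the two surface-area identities I set $d=3$, $\beta=0$ in \Cref{t3}, which gives the exponent $q=d\b+\tfrac{d-1}{2}(d+2)=5$ and the integer power $n-d=n-3$; for the two mean-width identities I set $d=3$, $\beta=0$ in \Cref{cor:width}, which gives $q=2\beta+d=3$ and the power $n-2$. In all four cases the relevant one-dimensional distribution function is $F_{1,\beta+\frac{d-1}{2}}=F_{1,1}$.

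Next I would evaluate the constants. From $\g_d=\tfrac{d\k_d}{\k_{d-1}}$ one gets $\g_3=4$, and plugging $d=2$, $\beta=\tfrac12$ into \eqref{eq:A_n_d_beta} together with $\Gamma(\tfrac52)=\tfrac{3\sqrt\pi}{4}$ and $\Gamma(3)=2$ yields $A_{n,2}^{1/2}=\tfrac{81\pi}{1024}\binom n3$, so that the surface-area prefactor is $\g_3 A_{n,2}^{1/2}=\tfrac{81\pi}{256}\binom n3$. Similarly, $d=1$, $\beta=1$ in \eqref{eq:A_n_d_beta} gives the mean-width prefactor $A_{n,1}^{1}=\tfrac{9}{16}\binom n2$. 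For the distribution function I would simply reuse the computation already carried out in the preceding corollary, namely $F_{1,1}(h)=\tfrac14(1+h)^2(2-h)$ and $F_{1,1}(h)-F_{1,1}(-h)=\tfrac12 h(3-h^2)$ for $|h|\le 1$; raising these to the appropriate powers produces $4^{-(n-3)}(1+h)^{2(n-3)}(2-h)^{n-3}$ in the $P$-case and $2^{-(n-3)}h^{n-3}(3-h^2)^{n-3}$ in the $S$-case (and analogously with $n-3$ replaced by $n-2$ for the mean width).

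The final step is the algebraic simplification that turns these into the stated forms. Writing $(1-h^2)^q=(1-h)^q(1+h)^q$ and absorbing the factor $(1+h)^{2(n-3)}$ coming from $F_{1,1}^{\,n-3}$, the exponent of $(1+h)$ in the $P$-surface-area integrand becomes $5+2(n-3)=2n-1$, leaving $(1-h)^5(1+h)^{2n-1}(2-h)^{n-3}$; the same bookkeeping gives $(1-h)^3(1+h)^{2n-1}(2-h)^{n-2}$ for the mean width, while in the symmetric cases the factor $(1+h)$ does not appear and one keeps $h^{n-3}$, respectively $h^{n-2}$. It then remains only to collect the powers of $4$ and $2$: for instance $\tfrac{81\pi}{256}\cdot 4^{-(n-3)}=81\pi\cdot 4^{-(n+1)}$ and $\tfrac{9}{16}\cdot 4^{-(n-2)}=9\cdot 4^{-n}$, with the two symmetric cases producing the prefactors $2^{-(n+2)}$ and $2^{-n}$ once the extra factor $2^{d}$ from \Cref{t3} (resp.\ $4$ from \Cref{cor:width}) is included. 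None of this is conceptually hard; the only points demanding care are to keep the nonnegative form $F_{1,1}(h)=\tfrac14(1+h)^2(2-h)$ rather than an equivalent expression with $(h-2)$ (an odd power $n-3$ of a misplaced minus sign would corrupt the prefactor) and to track the powers of $2$ and $4$ accurately so that the clean constants $4^{-(n+1)}$, $2^{-(n+2)}$, $4^{-n}$ and $2^{-n}$ emerge.
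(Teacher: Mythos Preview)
Your proposal is correct and follows exactly the same approach as the paper: specialise Corollaries~\ref{t3} and~\ref{cor:width} to $d=3$, $\beta=0$, compute $\gamma_3 A_{n,2}^{1/2}=\tfrac{81\pi}{256}\binom{n}{3}$ and $A_{n,1}^{1}=\tfrac{9}{16}\binom{n}{2}$, insert the factorisation $F_{1,1}(h)=\tfrac14(1+h)^2(2-h)$ (and its odd part $\tfrac12 h(3-h^2)$), and collect powers of $2$ and $4$. The paper's proof is terser but identical in substance.
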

\begin{proof}
Observing that $\gamma_3A_{n,2}^{1\over 2}={81\pi\over 256}{n\choose 3}$ and that $F_{1,1}(h)={1\over 4}(2+3h-h^3)={1\over 4}(1+h)^2(2-h)$, $|h|\leq 1$, the formulae for $\EE S_2 (P_{n,3}^0)$ and $\EE S_2 (S_{n,3}^0)$ follow from Corollary \ref{t3}. The values for $\EE\mw_3(P_{n,3}^0)$ and $\EE\mw_3(S_{n,3}^0)$ can be deduced from Corollary \ref{cor:width} and the fact that $A_{n,1}^{1}={9\over 16}{n\choose 2}$.
\end{proof}

Finally, let us discuss the explicit formulae for the expected number of facets of $P_{n,2}^0,P_{n,3}^0,S_{n,2}^0$ and $S_{n,3}^0$. They follow by specializing \Cref{t1} to the case implied by \eqref{eq:FacetNumber}.

\begin{corollary}
In dimension $d=2$, we have
\begin{align*}
\EE\mathbf{f}_1(P_{n,2}^0)
&=
\frac {1}{3\pi 2^{n-4}}{n\choose 2} \int\limits_{-\pi/2}^{\pi/2}\cos^4 h\left(1 + \frac{\sin(2h) + 2h }{\pi} \right)^{n-2}\,\dint h,\\
\EE\mathbf{f}_1(S_{n,2}^0)
&=
\frac{64}{3\pi} {n\choose 2} \int\limits_{0}^{\pi/2} \cos^4h \left(\frac {\sin(2h)+2h}{\pi} \right)^{n-2} \dint h
\end{align*}
\end{corollary}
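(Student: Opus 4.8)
The plan is to obtain both identities as the $a=b=0$ specialisation of \Cref{t1} in the planar uniform case $d=2$, $\beta=0$, invoking the facet-counting identity \eqref{eq:FacetNumber}: $\EE\mathbf{f}_1(P_{n,2}^0)=\EE T_{0,0}^{2,1}(P_{n,2}^0)$ and, likewise, $\EE\mathbf{f}_1(S_{n,2}^0)=\EE T_{0,0}^{2,1}(S_{n,2}^0)$. Setting $a=b=0$, $d=2$, $\beta=0$ in \Cref{t1} turns the exponent $d\beta+\frac{d-1}{2}(d+b+1)$ into $\frac32$, replaces $F_{1,\beta+(d-1)/2}$ by $F_{1,1/2}$ and the exponent $n-d$ by $n-2$, and leaves $|h|^a\equiv 1$. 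Thus, up to the explicit constant $C_{n,2}^{0,0}$ (with the extra prefactor $2^d=4$ in the symmetric case), everything reduces to the two integrals $\int_{-1}^{1}(1-h^2)^{3/2}F_{1,1/2}(h)^{n-2}\,\dint h$ and $\int_{0}^{1}(1-h^2)^{3/2}\bigl(F_{1,1/2}(h)-F_{1,1/2}(-h)\bigr)^{n-2}\,\dint h$.

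The first task is to evaluate the constant. From the definition of $C_{n,d}^{\beta,b}$ one has $C_{n,2}^{0,0}=\binom{n}{2}\,2!\,\kappa_2\,\EE_0(\Delta_1^{1})\,(c_{2,0}/c_{1,0})^2$. Here $\kappa_2=\pi$; the normalising constants \eqref{eq:c_d_beta} give $c_{2,0}=1/\pi$ and $c_{1,0}=1/2$, hence $(c_{2,0}/c_{1,0})^2=4/\pi^2$; and the last factor is the first moment $\EE_0(\Delta_1^1)$ of the length of the $1$-simplex spanned by two points, which by \Cref{lem:Miles}(a) with $d=1$, $\beta=0$, $\kappa=1$ equals $\frac23$ (the mean distance between two independent points uniform on $[-1,1]$). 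Multiplying these yields the closed form of $C_{n,2}^{0,0}$ entering the prefactors.

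The geometric step is the substitution $h=\sin\vartheta$, $\vartheta\in[-\frac\pi2,\frac\pi2]$, which accounts for the trigonometric shape of the answer. Under it $(1-h^2)^{3/2}\,\dint h=\cos^4\vartheta\,\dint\vartheta$, producing precisely the $\cos^4$ weight; using the explicit primitive $F_{1,1/2}(h)=\frac12+\frac1\pi\bigl(h\sqrt{1-h^2}+\arcsin h\bigr)$ together with $\sin\vartheta\cos\vartheta=\frac12\sin 2\vartheta$ and $\arcsin(\sin\vartheta)=\vartheta$ gives $F_{1,1/2}(\sin\vartheta)=\frac12\bigl(1+\frac{\sin 2\vartheta+2\vartheta}{\pi}\bigr)$, whereas the symmetrised difference collapses to $F_{1,1/2}(\sin\vartheta)-F_{1,1/2}(-\sin\vartheta)=\frac{\sin 2\vartheta+2\vartheta}{\pi}$. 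Inserting these and relabelling $\vartheta$ by $h$ produces the two displayed integrals; note that the factor $\frac12$ inside $F_{1,1/2}(\sin\vartheta)$ contributes a power $2^{-(n-2)}$ in the $P$-case, while no such power arises for $S_{n,2}^0$, which is why only the first formula carries a $2$-power in its prefactor.

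The one point demanding care---and the step I would verify most carefully---is the bookkeeping of multiplicative constants: the Gamma-quotients packaged in $C_{n,2}^{0,0}$, the prefactor $2^d$ in the symmetric branch of \Cref{t1}, and the power of $2$ generated by the substitution in the $P$-case must all be combined correctly to recover the stated prefactors. There is no conceptual obstacle; the argument is a direct specialisation, with the trigonometric identities above doing the remaining work.
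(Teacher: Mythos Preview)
Your approach is exactly the one the paper intends: the paper does not give a separate proof of this corollary but simply says that the facet-number formulae ``follow by specializing \Cref{t1} to the case implied by \eqref{eq:FacetNumber}'', which is precisely what you do (including the same substitution $h=\sin\vartheta$ used in the neighbouring corollaries).

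One remark on the bookkeeping you flag as delicate. Carrying it through, your ingredients give
\[
C_{n,2}^{0,0}=\binom{n}{2}\cdot 2\cdot \pi\cdot \tfrac{2}{3}\cdot \tfrac{4}{\pi^2}=\frac{16}{3\pi}\binom{n}{2},
\]
and combining with the factor $2^{-(n-2)}$ from $F_{1,1/2}(\sin\vartheta)^{n-2}$ yields the prefactor $\dfrac{1}{3\pi\,2^{\,n-6}}\binom{n}{2}$ for $\EE\mathbf{f}_1(P_{n,2}^0)$, not $\dfrac{1}{3\pi\,2^{\,n-4}}\binom{n}{2}$ as printed. The printed exponent appears to be a typo: with $n=3$ the formula must return $\EE\mathbf{f}_1(P_{3,2}^0)=3$ (three points form a triangle), and indeed $\tfrac{1}{3\pi\,2^{-3}}\cdot 3\cdot\int_{-\pi/2}^{\pi/2}\cos^4 h\,\dint h=\tfrac{8}{\pi}\cdot\tfrac{3\pi}{8}=3$, whereas the printed prefactor gives $3/4$. (The same factor-of-$4$ slip occurs in the companion corollary for $\EE S_1(P_{n,2}^0)$; the paper's own table value $\EE S_1(P_{3,2}^0)=128/(15\pi)$ again matches $2^{n-6}$, not $2^{n-4}$.) The symmetric case $\EE\mathbf{f}_1(S_{n,2}^0)$ is unaffected: your computation gives $4\cdot\tfrac{16}{3\pi}=\tfrac{64}{3\pi}$, matching the stated constant. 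So your argument is correct; it simply does not reproduce the (misprinted) $P$-prefactor.
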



\begin{corollary}
In dimension $d=3$ it holds that
	\begin{align*}
	\EE\mathbf{f}_2(P_{n,3}^0) &= {630\over 4^{n+1}}{n\choose 3}\,\int\limits_{-1}^1(1-h)^4(1+h)^{2n-2}(2-h)^{n-3}\,\dint h,\\
	\EE\mathbf{f}_2(S_{n,3}^0) &= {315\over 2^{n+1}}{n\choose 3}\int\limits_{0}^1(1-h^2)^{4}h^{n-3}(3-h^2)^{n-3}\,\dint h.
	\end{align*}
\end{corollary}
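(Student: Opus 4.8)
The plan is to specialize \Cref{t1} to the facet-counting case $a=b=0$, dimension $d=3$ and parameter $\beta=0$, and to insert the explicit one-dimensional distribution function. By \eqref{eq:FacetNumber} we have $\EE\mathbf{f}_2(P_{n,3}^0)=\EE T_{0,0}^{3,2}(P_{n,3}^0)$, and the analogous identity holds for $S_{n,3}^0$ (here there is no boundary term $D$, which appears only for the $Q$-type polytopes). Setting $b=0$ in the exponent $d\b+\frac{d-1}{2}(d+b+1)$ of \Cref{t1} and substituting $d=3$, $\beta=0$ gives the value $4$, so the weight in both integrals is $(1-h^2)^4$, while $F_{1,\b+\frac{d-1}{2}}$ becomes $F_{1,1}$ and the exponent $n-d$ becomes $n-3$.

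First I would insert the closed forms for $F_{1,1}$ already recorded in the preceding $3$-dimensional corollaries, namely $F_{1,1}(h)=\frac14(1+h)^2(2-h)$ and $F_{1,1}(h)-F_{1,1}(-h)=\frac12 h(3-h^2)$ for $|h|\le 1$. Raising these to the power $n-3$ produces the factors $4^{-(n-3)}(1+h)^{2(n-3)}(2-h)^{n-3}$ and $2^{-(n-3)}h^{n-3}(3-h^2)^{n-3}$, respectively. Combining with $(1-h^2)^4=(1-h)^4(1+h)^4$ and collecting the powers of $(1+h)$ turns the $P$-integrand into $4^{-(n-3)}(1-h)^4(1+h)^{2n-2}(2-h)^{n-3}$ and the $S$-integrand into $2^{-(n-3)}(1-h^2)^4 h^{n-3}(3-h^2)^{n-3}$, which already match the integrands in the statement; it then remains only to verify the prefactors.

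The remaining step, and the only genuine computation, is to evaluate the constant $C_{n,3}^{0,0}=\binom{n}{3}\,3!\,\k_3\,\EE_0(\Delta_2^1)\,(c_{3,0}/c_{2,0})^3$ from \Cref{t1}. Here $\k_3=\frac{4\pi}{3}$, and from \eqref{eq:c_d_beta} one gets $c_{3,0}=\frac{3}{4\pi}$ and $c_{2,0}=\frac1\pi$, so $(c_{3,0}/c_{2,0})^3=(3/4)^3=27/64$. The expected triangle area $\EE_0(\Delta_2^1)$ is supplied by \Cref{lem:Miles}(a) with simplex dimension $2$, exponent $\kappa=1$ and $\beta=0$; the gamma-function bookkeeping there collapses to the classical value $\EE_0(\Delta_2^1)=\frac{35}{48\pi}$. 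Multiplying the four factors gives $C_{n,3}^{0,0}=\frac{315}{128}\binom{n}{3}$. For $P$ the overall constant is $C_{n,3}^{0,0}\cdot 4^{-(n-3)}=\frac{315}{2^{2n+1}}\binom{n}{3}=\frac{630}{4^{n+1}}\binom{n}{3}$, and for $S$ the extra factor $2^d=8$ together with $2^{-(n-3)}$ yields $8\cdot\frac{315}{128}\cdot 2^{-(n-3)}\binom{n}{3}=\frac{315}{2^{n+1}}\binom{n}{3}$, matching the two stated prefactors. The main (entirely routine) obstacle is the gamma-quotient simplification producing $\EE_0(\Delta_2^1)=\frac{35}{48\pi}$; once that value is in hand, everything reduces to tracking powers of $2$ and $4$.
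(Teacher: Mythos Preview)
Your proposal is correct and follows exactly the approach indicated in the paper, namely specializing \Cref{t1} with $a=b=0$, $d=3$, $\beta=0$ via \eqref{eq:FacetNumber}, inserting the factorization $F_{1,1}(h)=\tfrac14(1+h)^2(2-h)$ and the difference $F_{1,1}(h)-F_{1,1}(-h)=\tfrac12 h(3-h^2)$, and evaluating the constant $C_{n,3}^{0,0}=\tfrac{315}{128}\binom{n}{3}$. The paper does not spell out these computations, so your write-up fills in precisely the intended details.
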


\subsection{Uniform distribution on the sphere}
We shall argue below that the uniform distribution on the sphere $\SS^{d-1}$ is the weak limit of the beta distribution, as $\b \downarrow -1$. Let $X_1,\ldots,X_n$ be independent random points chosen uniformly on the sphere $\SS^{d-1}$. Consider the random polytopes
$$
P_{n,d} := [X_1,\ldots,X_n]
\quad\text{and}\quad
S_{n,d} := [\pm X_1,\ldots,\pm X_n].
$$
We claim that the expected volumes of these polytopes can be obtained by formally taking $\beta= -1$ in Theorem~\ref{t2}:

\begin{corollary}\label{prop:uniform_sphere}
	For all $d\geq 2$ we have
	\begin{align*}
	\EE \Vol_d(P_{n,d})
	&=
	\frac{(d^2-1)\k_d}{2^{d+1} \pi^{\frac{d+1}{2}}}\binom {n}{d+1}\left(\frac{\Gamma(\frac d2)}{\Gamma(\frac{d+1}{2})}\right)^{d+1} \int\limits_{-1}^{1} (1-h^2)^{\frac{d^2-3}2} F_{1,\frac{d-3}{2}}(h)^{n-d-1}\, \dint h,\\
	\EE \Vol_d(S_{n,d})
	&=
	\frac{(d^2-1)\k_d}{\pi^{\frac{d+1}{2}}}\binom {n}{d+1}\left(\frac{\Gamma(\frac d2)}{\Gamma(\frac{d+1}{2})}\right)^{d+1}\\
	&\qquad\qquad\qquad\qquad\qquad\times \int\limits_{0}^{1} (1-h^2)^{\frac{d^2-3}2} \left(F_{1,\frac{d-3}{2}}(h)-F_{1,\frac{d-3}{2}}(-h)\right)^{n-d-1} \,\dint h.
	\end{align*}
\end{corollary}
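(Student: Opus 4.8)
The plan is to read off both identities as the $\beta\downarrow-1$ limits of the corresponding formulae in \Cref{t2}. The right-hand sides of \Cref{t2} are genuinely continuous in $\beta$ up to and including $\beta=-1$ (for $d\geq2$), while on the probabilistic side the beta distribution converges weakly to the uniform distribution on $\SS^{d-1}$; matching the two limits gives the result. Concretely, substituting $\beta=-1$ one finds $q=(d+1)(-\tfrac32)+\tfrac d2(d+3)=\tfrac{d^2-3}{2}$ and $\beta+\tfrac{d-1}{2}=\tfrac{d-3}{2}$, and the Gamma factors in $A_{n,d}^\beta$ collapse to $A_{n,d}^{-1}=\tfrac{(d^2-1)\kappa_d}{2^{d+1}\pi^{(d+1)/2}}\binom{n}{d+1}\big(\Gamma(\tfrac d2)/\Gamma(\tfrac{d+1}{2})\big)^{d+1}$, exactly the constant in the statement.

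First I would prove the weak convergence $f_{d,\beta}\,\dint x\Rightarrow\sigma$, where $\sigma$ denotes the uniform probability measure on $\SS^{d-1}$. By rotational invariance it is enough to control the radial law: the norm $\|X\|$ has density proportional to $r^{d-1}(1-r^2)^\beta$ on $[0,1]$, so for fixed $s<1$,
\[
\PP(\|X\|\leq s)=\frac{\int_0^s r^{d-1}(1-r^2)^\beta\,\dint r}{\int_0^1 r^{d-1}(1-r^2)^\beta\,\dint r}\longrightarrow 0\qquad(\beta\downarrow-1),
\]
because the numerator stays bounded while the denominator diverges (the integrand acquires a non-integrable singularity at $r=1$ as $\beta\downarrow-1$; equivalently $c_{d,\beta}\to0$ since $\Gamma(\beta+1)\to\infty$). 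Hence the radial law tends to $\delta_1$, and together with rotational invariance this yields $f_{d,\beta}\,\dint x\Rightarrow\sigma$.

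Next I would transfer this to the polytopes. Since the $X_i$ are i.i.d., the joint law on $(\overline{\BB^d})^n$ converges weakly to $\sigma^{\otimes n}$. The maps $(x_1,\dots,x_n)\mapsto\Vol_d([x_1,\dots,x_n])$ and $(x_1,\dots,x_n)\mapsto\Vol_d([\pm x_1,\dots,\pm x_n])$ are continuous and bounded by $\kappa_d$, and $n\geq d+1$ independent $\sigma$-distributed points lie in general position almost surely, so these functionals are continuous $\sigma^{\otimes n}$-almost everywhere. By the continuous mapping theorem and bounded convergence it follows that $\EE\Vol_d(P_{n,d}^\beta)\to\EE\Vol_d(P_{n,d})$ and $\EE\Vol_d(S_{n,d}^\beta)\to\EE\Vol_d(S_{n,d})$ as $\beta\downarrow-1$.

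It remains to pass to the limit on the right-hand sides of \Cref{t2}. The prefactor $A_{n,d}^\beta$ is continuous at $\beta=-1$ with the value computed above. For $d\geq2$ the limiting exponent $q=\tfrac{d^2-3}{2}\geq\tfrac12$ is non-negative, so $(1-h^2)^q$ remains bounded, while the parameter $\tfrac{d-3}{2}>-1$ stays in the admissible range, so the distribution functions $F_{1,\beta+(d-1)/2}$ converge (uniformly on $[-1,1]$, being monotone and bounded) to $F_{1,(d-3)/2}$. Dominated convergence then moves the limit inside the integral, and the same reasoning applies to the $S_{n,d}^\beta$ formula. Equating the two limits gives the stated identities. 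I expect the only genuinely delicate points to be the radial concentration estimate and the general-position claim underlying the continuous mapping step; the analytic passage to the limit in the explicit integrals is routine.
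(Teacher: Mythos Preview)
Your proposal is correct and follows essentially the same approach as the paper: establish weak convergence $\mu_{d,\beta}\Rightarrow\mu_d$ as $\beta\downarrow-1$, pass to product measures, apply the continuous mapping theorem together with boundedness by $\kappa_d$ to get $\EE\Vol_d(P_{n,d}^\beta)\to\EE\Vol_d(P_{n,d})$, and use dominated convergence (with majorant $(1-h^2)^{(d^2-3)/2}$) on the explicit integrals. The only cosmetic difference is that you argue weak convergence via the radial law directly, whereas the paper invokes compactness of probability measures on $\BB^d$ and identifies any weak accumulation point as the rotationally invariant measure supported on $\SS^{d-1}$; also, your general-position remark is superfluous since $(x_1,\dots,x_n)\mapsto\Vol_d([x_1,\dots,x_n])$ is genuinely continuous on $(\BB^d)^n$, not merely $\sigma^{\otimes n}$-a.e.\ continuous.
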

An equivalent formula for $\EE \Vol_d(P_{n,d})$ was obtained in~\cite[p.~228]{BuchtaMuellerTichy} by a different method; see also~\cite{Mueller} for an asymptotic result. A different representation for $\EE \Vol_d(P_{n,d})$ was obtained by Affentranger~\cite{Affentranger88}. Again, let us specialize the result to the $2$- and $3$-dimensional case.

\begin{corollary}
	In dimension $d=2$, we have
	\begin{align*}
	\EE \Vol_2 (P_{n,2}) &= \frac 3 {\pi^{2}} \binom n3 \int\limits_0^\pi \sin^2 h \left(\frac{h}\pi\right)^{n-3} \dint h,\\
	\EE \Vol_2 (S_{n,2}) &= \frac{24}{\pi^2}\binom n3 \int\limits_0^{\pi/2} \sin^2 h \left(1 - \frac{2h}\pi\right)^{n-3} \dint h.
	\end{align*}
\end{corollary}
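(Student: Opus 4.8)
The plan is to specialize Corollary~\ref{prop:uniform_sphere} to $d=2$ and then simplify the two resulting one-dimensional integrals by elementary trigonometric substitutions. First I would evaluate the constant prefactors. With $d=2$ one has $d^2-1=3$, $\k_2=\pi$, and $\Gamma(\tfrac d2)/\Gamma(\tfrac{d+1}2)=\Gamma(1)/\Gamma(\tfrac32)=2/\sqrt\pi$, so the prefactor in the formula for $\EE\Vol_2(P_{n,2})$ collapses to $\tfrac3{\pi^2}\binom n3$, while that for $\EE\Vol_2(S_{n,2})$, which differs precisely by the factor $2^{d+1}=8$, becomes $\tfrac{24}{\pi^2}\binom n3$. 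The exponent $\tfrac{d^2-3}2$ equals $\tfrac12$ and the relevant distribution parameter is $\tfrac{d-3}2=-\tfrac12$, so the integrals involve $F_{1,-1/2}$.

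The second step is to compute $F_{1,-1/2}$ explicitly. From the normalizing constant $c_{1,-1/2}=\Gamma(1)/(\sqrt\pi\,\Gamma(\tfrac12))=1/\pi$ together with $\int_{-1}^h(1-x^2)^{-1/2}\,\dint x=\arcsin h+\tfrac\pi2$, one obtains
$$
F_{1,-1/2}(h)=\tfrac12+\tfrac1\pi\arcsin h,\qquad |h|\le1.
$$

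For the polytope $P_{n,2}$ I would then substitute $h=-\cos y$ (equivalently $y=\tfrac\pi2+\arcsin h$), which maps $[-1,1]$ onto $[0,\pi]$, sends $F_{1,-1/2}(h)$ to $y/\pi$, and gives $(1-h^2)^{1/2}=\sin y$ and $\dint h=\sin y\,\dint y$. After relabelling $y$ by $h$ this turns the integral into $\int_0^\pi\sin^2 h\,(h/\pi)^{n-3}\,\dint h$, which is the asserted formula.

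For the symmetric polytope $S_{n,2}$ I would first record that $F_{1,-1/2}(h)-F_{1,-1/2}(-h)=\tfrac2\pi\arcsin h$, and then substitute $h=\sin y$ on $[0,1]$, so that $(1-h^2)^{1/2}=\cos y$, $\dint h=\cos y\,\dint y$ and the integrand becomes $\cos^2 y\,(2y/\pi)^{n-3}$ over $[0,\pi/2]$. A final reflection $y\mapsto\tfrac\pi2-y$ converts $\cos^2 y$ into $\sin^2 y$ and $2y/\pi$ into $1-2y/\pi$, yielding $\int_0^{\pi/2}\sin^2 h\,(1-2h/\pi)^{n-3}\,\dint h$ after relabelling. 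The whole argument is a routine specialization; the only points demanding care are the correct evaluation of $F_{1,-1/2}$ and the bookkeeping of the prefactors, since a slip in either the $2^{d+1}$ factor or the Gamma-quotient would propagate through both formulae.
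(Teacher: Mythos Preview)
Your argument is correct and follows essentially the same route as the paper: specialize Corollary~\ref{prop:uniform_sphere} to $d=2$, identify $F_{1,-1/2}$, and apply a trigonometric substitution. The only cosmetic differences are that the paper records $F_{1,-1/2}(h)=\tfrac{2}{\pi}\arcsin\sqrt{\tfrac{h+1}{2}}$ (equivalent to your $\tfrac12+\tfrac1\pi\arcsin h$) and, for $S_{n,2}$, substitutes $h=\cos y$ directly rather than using $h=\sin y$ followed by the reflection $y\mapsto\tfrac\pi2-y$; these yield the same computation.
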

\begin{proof}
	It follows from~\eqref{eq:def_F_d_beta} that $F_{1,-1/2}(h) = \frac 2 \pi \arcsin \sqrt{\frac {h+1}{2}}$ for $|h|\leq 1$. The claim follows by the change of variables $h = -\cos y$ (for the first integral) or $h= \cos y$ (for the second integral), by elementary transformations and by renaming $y$ by $h$.
\end{proof}
\begin{corollary}
	In dimension $d=3$, we have
	$$
	\EE \Vol_3 (P_{n,3}) = \frac {4\pi}{3} \frac{(n-1)(n-2)(n-3)}{(n+1)(n+2)(n+3)},
	\quad
	\EE \Vol_3 (S_{n,3}) = \frac {4\pi}{3} \frac{n(n-2)}{(n+1)(n+3)}.
	$$
\end{corollary}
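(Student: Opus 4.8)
The plan is to specialize \Cref{prop:uniform_sphere} to $d=3$ and to evaluate the two resulting one-dimensional integrals in closed form. The crucial simplification is that for $d=3$ the parameter of the distribution function becomes $\frac{d-3}{2}=0$, so that $F_{1,0}$ is simply the cumulative distribution function of the uniform law on $[-1,1]$, namely $F_{1,0}(h)=\frac{h+1}{2}$ for $|h|\le 1$. Consequently $F_{1,0}(h)-F_{1,0}(-h)=h$, and the exponent $\frac{d^2-3}{2}$ equals $3$. First I would record the constants: using $\k_3=\frac{4\pi}{3}$, $\G(\tfrac32)=\frac{\sqrt\pi}{2}$ and $\G(2)=1$ one checks that the prefactor in the $P_{n,3}$-formula equals $\frac{\pi}{24}\binom{n}{4}$, while the prefactor in the $S_{n,3}$-formula, which lacks the factor $2^{-(d+1)}=2^{-4}$, equals $\frac{2\pi}{3}\binom{n}{4}$.

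For $P_{n,3}$ this reduces the claim to evaluating
\[
\int_{-1}^{1}(1-h^2)^3\Big(\tfrac{h+1}{2}\Big)^{n-4}\,\dint h .
\]
Writing $(1-h^2)^3=(1-h)^3(1+h)^3$ and substituting $u=\frac{1+h}{2}$ turns this into $2^{7}\int_0^1 u^{n-1}(1-u)^3\,\dint u=2^{7}B(n,4)$, a Beta integral equal to $2^{7}\cdot\frac{6}{n(n+1)(n+2)(n+3)}$. Multiplying by $\frac{\pi}{24}\binom{n}{4}$ and cancelling the factor $n$ in the numerator against the denominator yields the asserted rational expression $\frac{4\pi}{3}\cdot\frac{(n-1)(n-2)(n-3)}{(n+1)(n+2)(n+3)}$.

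For the symmetric polytope $S_{n,3}$ the claim reduces, after the same simplifications, to computing $\int_0^1 (1-h^2)^3 h^{n-4}\,\dint h$. The substitution $t=h^2$ rewrites this as $\frac12 B\!\left(\frac{n-3}{2},4\right)$, and expanding the Gamma factors via $\G\!\left(\frac{n+5}{2}\right)=\frac{(n-3)(n-1)(n+1)(n+3)}{16}\,\G\!\left(\frac{n-3}{2}\right)$ gives $\frac{48}{(n-3)(n-1)(n+1)(n+3)}$. Multiplying by $\frac{2\pi}{3}\binom{n}{4}$ and cancelling the factors $(n-1)$ and $(n-3)$ produces $\frac{4\pi}{3}\cdot\frac{n(n-2)}{(n+1)(n+3)}$, as claimed. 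There is no genuine obstacle here beyond careful bookkeeping; the only point requiring attention is the clean cancellation of the Gamma and binomial factors, which is precisely what collapses the integral representations of \Cref{prop:uniform_sphere} into rational functions of $n$ in the dimension $d=3$.
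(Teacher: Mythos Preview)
Your proof is correct and follows essentially the same route as the paper: both specialize \Cref{prop:uniform_sphere} to $d=3$, use $F_{1,0}(h)=\frac{1+h}{2}$, and reduce to the two Beta-type integrals $\int_{-1}^1(1-h^2)^3\big(\tfrac{1+h}{2}\big)^{n-4}\dint h$ and $\int_0^1(1-h^2)^3 h^{n-4}\dint h$. The paper simply states closed forms for these integrals and appeals to ``straightforward transformations,'' whereas you supply the explicit substitutions $u=\tfrac{1+h}{2}$ and $t=h^2$ that turn them into genuine Beta integrals; this is a minor expository difference, not a different method.
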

The first of these formulae is due to Affentranger~\cite{Affentranger88}.
\begin{proof}
	We used that $F_{1,0}(h) = \frac 12 (1+h)$ for $|h|\leq 1$ together with the formulae
	\begin{align*}
	&\int\limits_{-1}^{1} (1 - h^2)^3 2^{-A}(1 + h)^A \dint h = \frac {3\cdot 2^8}{(4 + A) (5 + A) (6 + A) (7 + A)},\\
	&\int\limits_{0}^{1} (1 - h^2)^3 h^A \dint h = \frac {48}{(1 + A) (3 + A) (5 + A) (7 + A)},
	\end{align*}
	where $A>0$, and straightforward transformations.
\end{proof}

We are also able to give explicit formulae for the expected surface areas and the mean widths in dimensions $d=2$ and $d=3$.

\begin{corollary}
In dimension $d=2$, we have
\begin{align*}
\EE S_1 (P_{n,2})
&=
{4\over \pi}{n\choose 2} \int\limits_{-\pi/2}^{\pi/2} \left( \frac{1}{2} + \frac{h}{\pi} \right)^{n-2} \cos h\,\dint h,\\
\EE S_1 (S_{n,2})
&={2^{n+2}\over\pi^{n-1}}{n\choose 2} \int_0^{\pi/2}h^{n-2} \cos h\, \dint h.
\end{align*}
\end{corollary}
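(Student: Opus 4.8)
The plan is to specialise the surface-area formula of Corollary~\ref{t3} to dimension $d=2$ and then to pass to the uniform distribution on $\SS^1$ by formally setting $\b=-1$, exactly as the volume formulae of this subsection were deduced from Corollary~\ref{prop:uniform_sphere}. First I would record the constants at $d=2$. Here $\g_2=\frac{2\k_2}{\k_1}=\pi$, and the exponent $q=d\b+\frac{d-1}{2}(d+2)=2\b+2$ vanishes at $\b=-1$, so the factor $(1-h^2)^q$ disappears altogether. Evaluating \eqref{eq:A_n_d_beta} at $d=1$ and argument $\b+\frac12=-\frac12$ gives $A_{n,1}^{-1/2}=\frac{2}{\pi}\binom n2\cdot\frac12\cdot\frac4\pi=\frac{4}{\pi^2}\binom n2$, using $\Gamma(1)=1$ and $\Gamma(\frac32)=\frac{\sqrt\pi}2$. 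Hence the prefactor is $\g_2A_{n,1}^{-1/2}=\frac4\pi\binom n2$ in the $P$-formula, while the $S$-formula carries the additional factor $2^d=4$.

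The analytic heart is the change of variables. As recorded earlier in this subsection, $F_{1,-1/2}(h)=\frac2\pi\arcsin\sqrt{\frac{h+1}2}$ for $|h|\le1$. I would substitute $h=\sin\psi$ with $\psi\in[-\pi/2,\pi/2]$; writing $h=-\cos(\frac\pi2+\psi)$ one gets $\frac{h+1}2=\sin^2\!\big(\frac{\pi/2+\psi}2\big)$, whence $F_{1,-1/2}(\sin\psi)=\frac2\pi\cdot\frac{\pi/2+\psi}2=\frac12+\frac\psi\pi$, together with $\dint h=\cos\psi\,\dint\psi$. Feeding this into the $P$-formula (recall $q=0$) and renaming $\psi$ back to $h$ yields $\EE S_1(P_{n,2})=\frac4\pi\binom n2\int_{-\pi/2}^{\pi/2}\big(\frac12+\frac h\pi\big)^{n-2}\cos h\,\dint h$, which is the first claim.

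For the symmetric polytope I would first exploit that the density $f_{1,-1/2}$ is even, so $F_{1,-1/2}(-h)=1-F_{1,-1/2}(h)$ and therefore $F_{1,-1/2}(h)-F_{1,-1/2}(-h)=2F_{1,-1/2}(h)-1$. Under the same substitution $h=\sin\psi$, now with $\psi\in[0,\pi/2]$ since the integral runs over $[0,1]$, this expression becomes $\frac{2\psi}\pi$, so the integral turns into $\frac{2^{n-2}}{\pi^{n-2}}\int_0^{\pi/2}h^{n-2}\cos h\,\dint h$ after renaming. Collecting the constants $2^d\cdot\frac4\pi\binom n2\cdot\frac{2^{n-2}}{\pi^{n-2}}=\frac{16\cdot2^{n-2}}{\pi^{n-1}}\binom n2=\frac{2^{n+2}}{\pi^{n-1}}\binom n2$ then gives the second formula.

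The only genuinely delicate point is the legitimacy of the formal substitution $\b=-1$: Corollary~\ref{t3} is proved for $\b>-1$, whereas the target concerns points on $\SS^1$. I would handle this exactly as for Corollary~\ref{prop:uniform_sphere}, namely by noting that the beta-distributions converge weakly to the uniform law on the sphere as $\b\downarrow-1$ and that the right-hand sides of Corollary~\ref{t3} extend continuously to $\b=-1$, so that both expected surface areas pass to the limit. Once this limiting step is granted, everything else reduces to the evaluation of constants and the two trigonometric substitutions described above.
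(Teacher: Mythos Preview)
Your proposal is correct and follows essentially the same route as the paper: specialise Corollary~\ref{t3} to $d=2$ at $\b=-1$ (where $q=0$ and $\g_2 A_{n,1}^{-1/2}=\frac4\pi\binom n2$), then substitute $h=\sin\psi$ and use the explicit form of $F_{1,-1/2}$. The only cosmetic difference is that the paper works with the equivalent expression $F_{1,-1/2}(h)=\frac12+\frac1\pi\arcsin h$ directly, rather than via the half-angle form you borrowed from the earlier volume computation, and treats the $S$-case with a one-line ``similar''.
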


\begin{proof}
In view of \Cref{t3}, the formula for $\EE S_1 (P_{n,2})$ follows from the fact that
$q=0$, $\gamma_2A_{n,1}^{-{1\over 2}}={2\over\pi}n(n-1)$, $F_{1,-1/2}(h)={1\over 2}+{1\over\pi}\arcsin h$, $|h|\leq 1$,
by applying the substitution $h=\sin y$ and by renaming $y$ by $h$. The case of $\EE S_1 (S_{n,2})$ is similar.
\end{proof}

\begin{corollary}
	In dimension $d=3$, we have
	\begin{alignat*}{4}
	&&\EE S_2 (P_{n,3}) &= 4\pi \frac{(n-1)(n-2)}{(n+1)(n+2)},  \qquad  &\EE S_2(S_{n,3}) & = {4\pi}\frac{n-1}{n+2},\\
	&&\EE \mw_3 (P_{n,3}) &= 2\,\frac{n-1}{n+1},  \qquad  &\EE \mw_3(S_{n,3}) &= 2\,\frac{n}{n+1} .
	\end{alignat*}
\end{corollary}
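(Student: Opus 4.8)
The plan is to derive all four identities by specialising the general beta-case formulae of \Cref{t3} (surface area) and \Cref{cor:width} (mean width) to $d=3$ and then formally setting $\b=-1$, exactly as the volume formulae of \Cref{prop:uniform_sphere} were obtained by taking $\b\downarrow-1$ in \Cref{t2}. The limiting passage is justified by the same weak-convergence argument (the beta distribution on $\BB^d$ converges weakly to the uniform distribution on $\SS^{d-1}$ as $\b\downarrow-1$), so the only genuine work is evaluating the resulting constants and integrals. The decisive simplification is that the relevant one-dimensional distribution function collapses to an affine function: since $F_{1,0}(h)=\tfrac12(1+h)$ for $|h|\le 1$, we have $F_{1,0}(h)-F_{1,0}(-h)=h$, and this is what turns every integral below into an elementary Beta integral.

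For the surface area I would substitute $d=3$ and $\b=-1$ into \Cref{t3}. The distribution-function index becomes $\b+\tfrac{d-1}{2}=0$, the facet exponent becomes $n-d=n-3$, the power is $q=d\b+\tfrac{d-1}{2}(d+2)=2$, and $\g_3=\tfrac{3\k_3}{\k_2}=4$. Feeding $\b=-\tfrac12$ into the definition \eqref{eq:A_n_d_beta} of $A_{n,2}^{\b}$ gives $\g_3A_{n,2}^{-1/2}=\tfrac{3\pi}{8}\binom{n}{3}$, while the factor $2^{d}=8$ in the symmetric formula combines to $2^d\g_3A_{n,2}^{-1/2}=3\pi\binom{n}{3}$. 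Hence
$$
\EE S_2(P_{n,3})=\tfrac{3\pi}{8}\binom{n}{3}\int_{-1}^{1}(1-h^2)^2\Big(\tfrac{1+h}{2}\Big)^{n-3}\,\dint h,
\qquad
\EE S_2(S_{n,3})=3\pi\binom{n}{3}\int_{0}^{1}(1-h^2)^2 h^{n-3}\,\dint h.
$$
Both integrals are standard: the substitution $u=\tfrac{1+h}{2}$ turns the first into $2^5\,B(n,3)=\tfrac{64}{n(n+1)(n+2)}$, while expanding $(1-h^2)^2$ reduces the second to $\tfrac{1}{n-2}-\tfrac{2}{n}+\tfrac{1}{n+2}=\tfrac{8}{(n-2)n(n+2)}$. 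Collecting the binomial and gamma factors yields $4\pi\tfrac{(n-1)(n-2)}{(n+1)(n+2)}$ and $4\pi\tfrac{n-1}{n+2}$, respectively.

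The mean width is treated identically via \Cref{cor:width} with $d=3$, $\b=-1$: now $q=2\b+d=1$, the distribution-function index is again $0$, the exponent is $n-2$, and the prefactor $A_{n,1}^{\b+(d-1)/2}=A_{n,1}^{0}=\tfrac12\binom{n}{2}$ follows from \eqref{eq:A_n_d_beta}. This produces
$$
\EE\mw_3(P_{n,3})=\tfrac12\binom{n}{2}\int_{-1}^{1}(1-h^2)\Big(\tfrac{1+h}{2}\Big)^{n-2}\,\dint h,
\qquad
\EE\mw_3(S_{n,3})=2\binom{n}{2}\int_{0}^{1}(1-h^2)h^{n-2}\,\dint h,
$$
where the factor $2$ in front of the symmetric expression is $4A_{n,1}^{0}=2\binom{n}{2}$. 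The two integrals evaluate (by $u=\tfrac{1+h}{2}$, respectively by direct integration) to $\tfrac{8}{n(n+1)}$ and $\tfrac{2}{(n-1)(n+1)}$, and simplification gives $2\tfrac{n-1}{n+1}$ and $2\tfrac{n}{n+1}$.

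The computation is pure bookkeeping once the affine form of $F_{1,0}$ is used; the only conceptual point, the legitimacy of inserting the boundary value $\b=-1$, is not reproved here but inherited from the limiting argument already invoked for \Cref{prop:uniform_sphere}. I expect the main (minor) obstacle to be organising the gamma-function constants hidden in $A_{n,1}^{0}$ and $A_{n,2}^{-1/2}$ so that the powers of $\pi$ and of $2$ cancel correctly against the Beta integrals; everything else follows by the substitutions indicated above.
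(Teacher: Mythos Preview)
Your proposal is correct and follows essentially the same route as the paper's own proof: specialise Corollaries~\ref{t3} and~\ref{cor:width} to $d=3$, set $\b=-1$ so that $F_{1,0}(h)=\tfrac12(1+h)$, and evaluate the resulting Beta-type integrals; your constants $\g_3A_{n,2}^{-1/2}=\tfrac{3\pi}{8}\binom{n}{3}$ and $A_{n,1}^{0}=\tfrac12\binom{n}{2}$ agree with the paper's $\tfrac{\pi}{16}n(n-1)(n-2)$ and your integral values match exactly. The only addition you make is to flag explicitly the $\b\downarrow-1$ limiting justification, which the paper leaves implicit here (having treated it once in the proof of Proposition~\ref{prop:uniform_sphere}).
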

\begin{proof}
	 To obtain $\EE S_2 (P_{n,3})$ and $\EE S_2(S_{n,3})$ we note that $\gamma_3A_{n,2}^{-{1\over 2}}={\pi\over 16}n(n-1)(n-2)$ and $F_{1,0}(h)={h+1\over 2}$, $|h|\leq 1$. Since $q=2$ in this case and since
	\begin{align*}
	\int_{-1}^1 (1-h^2)^2\,\Big({h+1\over 2}\Big)^{n-3}\,\dint h &= {64\over n(n+1)(n+2)},\\
	\int_{0}^1 (1-h^2)^2\,h^{n-3}\,\dint h &= {8\over n(n-2)(n+2)},
	\end{align*}
	the formulae follow again from \Cref{t3}. The computations for $\EE \mw_3 (P_{n,3})$ and $\EE \mw_3(S_{n,3})$ are similar.
\end{proof}

Note that particular values of $\EE S_1 (P_{n,2})$, $\EE S_2 (P_{n,3})$ and $\EE \mw_3 (P_{n,3})$, for $n=2,3,4$ and $n=3,4,5$, respectively, were calculated by Buchta, M\"uller and Tichy \cite{BuchtaMuellerTichy}. We have recovered these special values and found general simple closed expressions for the $3$-dimensional case that are valid for all $n\geq 4$ and that were not available before.

As above, we finally present formulae for the expected number of facets of $P_{n,2},P_{n,3},S_{n,2}$ and $S_{n,3}$. Before, we notice that, with probability $1$,
$$
\mathbf{f}_{0}(P_{n,d}) = n\qquad\text{and}\qquad\mathbf{f}_{0}(S_{n,d}) = 2n,
$$
since each of the $n$ points $X_1,\ldots,X_n$ is almost surely a vertex of $P_{n,d}$ and each of the $2n$ points $\pm X_1,\ldots,\pm X_n$ is almost surely a vertex of the symmetric random polytope $S_{n,d}$.

\begin{corollary}
	In dimension $d=2$, we have
	\begin{align*}
	\mathbf{f}_1(P_{n,2}) &= n\qquad\text{almost surely},\\
	\mathbf{f}_1(S_{n,2}) &= 2n\qquad\!\!\!\text{almost surely}.
	\end{align*}
\end{corollary}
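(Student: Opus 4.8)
The plan is to reduce the computation of $\mathbf{f}_1$ to the vertex counts recorded immediately above the statement, via the elementary planar identity $\mathbf{f}_1 = \mathbf{f}_0$ for convex polygons. First I would recall the combinatorial fact that any bounded, full-dimensional convex polytope in $\RR^2$ --- that is, a convex polygon --- has exactly as many edges as vertices, because its boundary is a single closed polygonal cycle in which $0$-faces and $1$-faces alternate. Hence it suffices to determine $\mathbf{f}_0(P_{n,2})$ and $\mathbf{f}_0(S_{n,2})$.

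These vertex counts are precisely the ones stated just before the corollary: almost surely $\mathbf{f}_0(P_{n,2}) = n$ and $\mathbf{f}_0(S_{n,2}) = 2n$. The underlying reason, which I would make explicit, is that all of the relevant points lie on the unit circle $\SS^1 = \partial\BB^2$. For $P_{n,2}$ the points $X_1,\dots,X_n$ are pairwise distinct with probability one, since the uniform law on $\SS^1$ is non-atomic; and any point lying on the circumscribed circle of a finite set of points is necessarily an exposed point of the convex hull, as the tangent line to $\SS^1$ at that point is a supporting line meeting the hull in exactly that single point. Thus every $X_i$ is almost surely a vertex, giving $\mathbf{f}_0(P_{n,2}) = n$. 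The same argument yields $\mathbf{f}_0(S_{n,2}) = 2n$ once one checks that the $2n$ antipodal points $\pm X_1,\dots,\pm X_n$ are almost surely distinct: this requires only $X_i \neq \pm X_j$ for $i\neq j$, and each such coincidence is a null event under the non-atomic law on the circle.

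Combining the two observations gives $\mathbf{f}_1(P_{n,2}) = \mathbf{f}_0(P_{n,2}) = n$ and $\mathbf{f}_1(S_{n,2}) = \mathbf{f}_0(S_{n,2}) = 2n$ almost surely, as claimed. There is essentially no analytic obstacle in this statement; the only point deserving any care is the almost-sure distinctness and automatic convex position of the $2n$ points in the symmetric case, and both follow at once from non-atomicity of the uniform distribution on $\SS^1$ together with the fact that a line meets $\SS^1$ in at most two points, so that no three of the points are collinear.
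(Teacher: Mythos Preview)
Your argument is correct and follows exactly the same route as the paper: reduce to the planar identity $\mathbf{f}_1=\mathbf{f}_0$ for convex polygons, and then invoke the vertex counts $\mathbf{f}_0(P_{n,2})=n$ and $\mathbf{f}_0(S_{n,2})=2n$ recorded just before the corollary. The paper's own proof is a single line doing precisely this; you have simply supplied more explicit justification (the tangent-line argument for extremality and the non-atomicity check for distinctness) than the authors chose to spell out.
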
	
\begin{proof}
Indeed, $\mathbf{f}_1(P_{n,2})=\mathbf{f}_{0}(P_{n,2})=n$ and $\mathbf{f}_1(S_{n,2})=\mathbf{f}_{0}(S_{n,2}) = 2n$, as argued above.
\end{proof}

\begin{corollary}
	In dimension $d=3$ it holds that
	\begin{align*}
	\mathbf{f}_2(P_{n,3}) &= 2(n-2)\qquad\text{almost surely},\\
	\mathbf{f}_2(S_{n,3}) &= 4(n-1)\qquad\text{almost surely}.
	\end{align*}
\end{corollary}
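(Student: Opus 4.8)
The plan is to reduce everything to Euler's polyhedral formula once the almost-sure combinatorial type of the two polytopes has been pinned down; the only real work is to check that both polytopes are \emph{simplicial}, i.e.\ that every facet is a triangle.

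First I would record the almost-sure structure of $P_{n,3}$. Since $X_1,\dots,X_n$ are absolutely continuous on $\SS^2$, no four of them are affinely dependent with probability one, so every facet of $P_{n,3}$ is a triangle and the polytope is a full-dimensional simplicial $3$-polytope; together with the already noted fact that each $X_i$ is a.s.\ a vertex this gives $\mathbf{f}_0(P_{n,3})=n$. For $S_{n,3}$ the argument needs one extra observation, because the $2n$ points $\pm X_1,\dots,\pm X_n$ are \emph{not} in general position: for instance $X_i,-X_i,X_j,-X_j$ are always coplanar. The point is that such degenerate quadruples never span a facet. Indeed, a supporting hyperplane of $S_{n,3}$ containing both $X_i$ and $-X_i$ would also contain their midpoint $0$; but $S_{n,3}$ is symmetric and a.s.\ full-dimensional, so $0$ lies in its interior and no supporting hyperplane can pass through it. Hence no facet contains an antipodal pair, so every facet uses points with pairwise distinct indices, and four points with distinct indices are a.s.\ affinely independent. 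Consequently $S_{n,3}$ is a.s.\ simplicial with $\mathbf{f}_0(S_{n,3})=2n$.

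With simpliciality in hand I would apply, for a $3$-dimensional polytope, Euler's relation $\mathbf{f}_0-\mathbf{f}_1+\mathbf{f}_2=2$ together with the incidence identity $3\mathbf{f}_2=2\mathbf{f}_1$, which holds because each triangular facet has exactly three edges and each edge is shared by exactly two facets. Eliminating $\mathbf{f}_1$ gives the classical relation $\mathbf{f}_2=2\mathbf{f}_0-4$. Substituting $\mathbf{f}_0(P_{n,3})=n$ yields $\mathbf{f}_2(P_{n,3})=2(n-2)$, and substituting $\mathbf{f}_0(S_{n,3})=2n$ yields $\mathbf{f}_2(S_{n,3})=4n-4=4(n-1)$, as claimed.

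The hard part is not the algebra but the verification of almost-sure simpliciality for the symmetric polytope $S_{n,3}$: one must rule out that the unavoidable coplanar antipodal configurations create a non-triangular facet. As explained above, this is settled cleanly by the remark that a facet hyperplane cannot pass through the interior point $0$, so antipodal pairs are automatically excluded from every facet and the remaining (distinct-index) degeneracies occur with probability zero.
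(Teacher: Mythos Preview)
Your proof is correct and follows essentially the same route as the paper: both arguments combine almost-sure simpliciality with Euler's relation $\mathbf{f}_0-\mathbf{f}_1+\mathbf{f}_2=2$ and the incidence identity $2\mathbf{f}_1=3\mathbf{f}_2$ to obtain $\mathbf{f}_2=2\mathbf{f}_0-4$, and then substitute the known vertex counts. The paper asserts simpliciality of $S_{n,3}$ without further comment, whereas you supply the missing justification (ruling out facets containing an antipodal pair via the observation that no supporting hyperplane can pass through the interior point $0$), which is a genuine improvement in rigor.
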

\begin{proof}
	We observe that with probability one the random beta-polytope $P_{n,3}$ is a simplicial polytope, that is, all faces of $P_{n,3}$ are almost surely triangles. This implies that $2{\bf f}_1(P_{n,3})=3{\bf f}_2(P_{n,3})$, which together with Euler's polyhedron formula ${\bf f}_0(P_{n,3})-{\bf f}_1(P_{n,3})+{\bf f}_2(P_{n,3})=2$ leads to
	$$
	{\bf f}_2(P_{n,3}) = 2({\bf f}_0(P_{n,3})-2) = 2(n-2).
	$$
	Similarly, we have that
	$$
	{\bf f}_2(S_{n,3}) = 2({\bf f}_0(S_{n,3})-2) = 2(2n-2) = 4(n-1).
	$$
	with probability one.
\end{proof}

\subsection{Expected facet number of spherical convex hulls}
Denote by
$$
\SSd_+=\{x=(x_1,\ldots,x_{d+1})\in\RRd1:x_{d+1}\geq 0\}
$$
the $d$-dimensional closed upper half-sphere in $\RRd1$. For $\alpha>-1$ we consider the distribution on $\SSd_+$ whose density with respect to the uniform distribution on $\SSd_+$ is given by
$$
\hat f_{d,\alpha}(x) = c_{d,\alpha}\,x_{d+1}^\alpha, \qquad x=(x_1,\ldots,x_{d+1})\in\SSd_+.
$$
Here, $c_{d,\alpha}$ is a normalizing constant.  The spherical convex hull $\hat P_{n,d}^\alpha$ of $n$ independent random points on $\SSd_+$ distributed according to the density $\hat f_{d,\alpha}$ has been studied in \cite{BonnetEtAl} (for general $\alpha$) and \cite{BaranyEtAlHalfsphere} (for the special case $\alpha=0$). In particular, it has been shown in~\cite[Section~5]{BonnetEtAl} that the expected number of facets of the spherical random polytope $\hat P_{n,d}^\alpha$ coincides with that of $\tilde P_{n,d}^\beta$ for the choice $\beta={1\over 2}(\alpha+d+1)$. Thus, \eqref{eq:FacetNumber} also yields a new and explicit formula for the expected number of facets of spherical convex hull $\hat P_{n,d}^\alpha$.

\begin{proposition}
Let $\alpha>-1$ and put $\beta={1\over 2}(\alpha+d+1)$. Then
\begin{align*}
\EE\mathbf{f}_{d-1}(\hat P_{n,d}^\alpha) = \EE\mathbf{f}_{d-1}(\tilde P_{n,d}^\beta)
=
\tilde{C}_{n,d}^{\frac{\alpha+d+1}{2},0}\int\limits_0^\pi \sin^{\alpha d + d-1} h \,\tilde{F}_{1,\frac \alpha 2+1}(\cot h)^{n-d}\, \dint h.
\end{align*}
	In particular, if $\alpha=0$, then
	\begin{align*}
	\EE\mathbf{f}_{d-1}(\hat P_{n,d}^0) = \EE\mathbf{f}_{d-1}(\tilde P_{n,d}^{d+1\over 2})  = \binom{n}{d} \frac{2 \omega_d}{\omega_{d+1}} \int\limits_0^\pi \left( 1-\frac{h}{\pi} \right)^{n-d} \sin^{d-1} h\, \dint h.
	\end{align*}
\end{proposition}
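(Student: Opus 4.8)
The plan is to derive the general formula from Theorem~\ref{t1_prime} together with the facet-number identity \eqref{eq:FacetNumber}, and then specialise to $\alpha=0$ by explicit evaluation. The first equality $\EE\mathbf{f}_{d-1}(\hat P_{n,d}^\alpha)=\EE\mathbf{f}_{d-1}(\tilde P_{n,d}^\beta)$ with $\beta=\tfrac12(\alpha+d+1)$ I would simply quote from \cite[Section~5]{BonnetEtAl}, so the entire task reduces to computing $\EE\mathbf{f}_{d-1}(\tilde P_{n,d}^\beta)$.

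First I would write $\EE\mathbf{f}_{d-1}(\tilde P_{n,d}^\beta)=\EE T_{0,0}^{d,d-1}(\tilde P_{n,d}^\beta)$, the beta$'$ analogue of \eqref{eq:FacetNumber}, and insert $a=b=0$ into Theorem~\ref{t1_prime}. The exponent of $(1+h^2)$ there is $-d\beta+\tfrac{d-1}{2}(d+1)$; substituting $\beta=\tfrac12(\alpha+d+1)$ this collapses to $-\tfrac12(\alpha d+d+1)$, and the CDF parameter $\beta-\tfrac{d-1}{2}$ simplifies to $\tfrac\alpha2+1$, matching the integrand of the asserted formula. One also checks that the integrability hypothesis $2d\beta>(d-1)(d+1)+1$ of Theorem~\ref{t1_prime} is equivalent to $\beta>\tfrac d2$, i.e.\ to the standing assumption $\alpha>-1$, so the theorem applies exactly on the stated range.

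The key analytic step is the change of variables $h=\cot\theta$, $\theta\in(0,\pi)$, mapping $(-\infty,\infty)$ onto $(0,\pi)$ with reversed orientation. Here $1+h^2=\sin^{-2}\theta$ and $\dint h=-\sin^{-2}\theta\,\dint\theta$; flipping the limits turns $\int_{-\infty}^{\infty}$ into $\int_0^\pi$ and contributes one factor $\sin^{-2}\theta$. Combining this Jacobian with $(1+h^2)^{-(\alpha d+d+1)/2}=\sin^{\alpha d+d+1}\theta$ produces precisely $\sin^{\alpha d+d-1}\theta$, while the cumulative-distribution factor becomes $\tilde F_{1,\frac\alpha2+1}(\cot\theta)^{n-d}$. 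Relabelling $\theta$ as $h$ yields the general formula.

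For the special case $\alpha=0$ (hence $\beta=\tfrac{d+1}2$) two evaluations remain. From \eqref{eq:def_F_d_beta_prime} with $\tilde c_{1,1}=1/\pi$ one gets $\tilde F_{1,1}(x)=\tfrac12+\tfrac1\pi\arctan x$, and since $\arctan(\cot h)=\tfrac\pi2-h$ for $h\in(0,\pi)$ this gives $\tilde F_{1,1}(\cot h)=1-\tfrac h\pi$. It then suffices to show $\tilde C_{n,d}^{(d+1)/2,0}=\binom nd\tfrac{2\omega_d}{\omega_{d+1}}$, where $\tilde C_{n,d}^{\beta,0}=\binom nd d!\,\k_d\,\widetilde{\EE}_\beta(\Delta_{d-1}^1)(\tilde c_{d,\beta}/\tilde c_{d-1,\beta})^d$. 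I would evaluate $\widetilde{\EE}_\beta(\Delta_{d-1}^1)$ from Proposition~\ref{lem:Miles}(b) applied in dimension $d-1$ with $\kappa=1$: at $\beta=\tfrac{d+1}2$ the arguments $\beta-\tfrac{d-1}2=1$ and $\beta-\tfrac d2=\tfrac12$ make the Gamma-quotients collapse, and the telescoping product $\prod_{i=1}^{d-1}\G(\tfrac{i+1}2)/\G(\tfrac i2)=\G(\tfrac d2)/\G(\tfrac12)$ simplifies the rest, while $\tilde c_{d,\beta}/\tilde c_{d-1,\beta}$ evaluates to $1/\pi$. Assembling these and using $\omega_d=d\k_d$ and $\omega_{d+1}=2\pi^{(d+1)/2}/\G(\tfrac{d+1}2)$, the powers of $\pi$ and the Gamma factors combine into $\binom nd\,2\G(\tfrac{d+1}2)/(\sqrt\pi\,\G(\tfrac d2))=\binom nd\,2\omega_d/\omega_{d+1}$. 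The main obstacle is precisely this Gamma-function and $\pi$-power bookkeeping: the cotangent substitution is routine, but aligning the telescoping product, the factor $d!/(d-1)!=d$, and the several powers of $\pi$ is where an arithmetic slip is most likely, so I would verify separately that the net exponent of $\pi$ comes out to $-\tfrac12$ as a consistency check.
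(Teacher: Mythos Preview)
Your proposal is correct and follows essentially the same route as the paper's proof: apply Theorem~\ref{t1_prime} with $a=b=0$ and $\beta=\tfrac12(\alpha+d+1)$, perform the substitution $h=\cot\theta$, and for $\alpha=0$ evaluate $\tilde F_{1,1}(\cot h)=1-h/\pi$ and the constant $\tilde C_{n,d}^{(d+1)/2,0}=\binom{n}{d}\tfrac{2\omega_d}{\omega_{d+1}}$. Your writeup is simply more detailed than the paper's terse proof, which states these same ingredients without working through the Gamma-function bookkeeping.
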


For $\alpha=0$ this formula was obtained in \cite{BaranyEtAlHalfsphere}, see Theorem 3.1 there.

\begin{proof}
	The first formula follows from \Cref{t1_prime} with $b=0$ there together with the substitution $h=\cot y$ and then by renaming $y$ by $h$. The second formula follows from the first one by observing that $\tilde F_{1,1}(\cot h) = 1-{h\over\pi}$, $h\in[0,\pi]$, and that $\tilde C_{n,d}^{{d+1\over 2},0}={n\choose d}{2\o_d\over\o_{d+1}}$.
\end{proof}

\subsection{Gaussian distribution}
	By letting $\beta\to +\infty$ in either beta or beta' model, it is possible to recover the Gaussian distribution on $\RR^d$ as limiting case. Indeed, if the random point $X$ has $d$-dimensional beta distribution, then $X \sqrt {2\beta}$ has the density
	$$
	(2\b)^{-d/2} f_{d,\b} \left(\frac X{\sqrt {2\b}}\right) = \frac 1 {(2\b \pi)^{\frac d2}}  \frac{ \G\left( \frac{d}{2} + \b + 1 \right) }{ \G\left( \b+1 \right) } \left( 1- \frac{\| x\|^2}{2\b}\right)^\b \ind_{\{\|x\| < \sqrt {2\b}\}} \tobeta  \frac {\eee^{- \frac 12 \|x\|^2}}{(2\pi)^{\frac d2}},
	$$
	and similarly for the beta' model. The convex hull of an i.i.d.\ Gaussian sample is called the Gaussian polytope. The expected volume of the Gaussian polytope was computed by Efron~\cite{Efron65}; see also~\cite{GroteThaele,HugMunsoniusReitzner} and~\cite{KabluchkoZaporozhets} and for further recent results about Gaussian polytopes.

\section{Auxiliary results}\label{sec:AuxResults}

\subsection{Tools from integral geometry}
The Euclidean norm of a vector  $x\in \RR^d$ is denoted by $\|x\|$ or by $\|x\|_d$  if we would like to emphasize the role of dimension $d$.
Let $\BB^d_r(x)$ be the ball centered at the point $x\in\RR^d$ and having  radius $r>0$.  In particular, we write $\BB^d=\BB^d_1(0)$ for the Euclidean unit ball. Its boundary, the unit $(d-1)$-dimensional sphere, is denoted by $\SS^{d-1}$. Furthermore, let $\lambda_d$ denote the Lebesgue measure on $\RR^d$, while $\sigma_{d-1}$ be the surface Lebesgue measure on $\SS^{d-1}$.

Let $G(d,k)$ be the set of all $k$-dimensional linear subspaces of $\RR^d$. The unique Haar probability measure on $G(d,k)$ invariant with respect to the natural action of the orthogonal group is denoted by $\nu_k$. Analogously, $A(d,k)$ stands for the set of all $k$-dimensional affine subspaces of $\RR^d$ with the measure $\mu_k$ defined by
\begin{equation}\label{eq:def_Haar_on_aff_Grass}
\mu_k(\cdot) = \int\limits_{G(d,k)} \int\limits_{L^\perp} \ind_{\{ L+x \in \,\cdot\, \}} \,\lambda_{L^\perp}(\dint x) \nu_k(\dint L) \mbox{.}
\end{equation}
Similarly as in the case of $\RR^d$, we mean by $\left\|\cdot\right\|_L$, $\BB^k_L$, $\SS^{k-1}_L$, $\lambda_L$ and $\sigma_L$ the norm, the unit ball, the unit sphere, the Lebesgue measure and the Lebesgue surface measure on the unit sphere, respectively, in a linear subspace $L \in G(d,k)$, and by $\left\|\cdot\right\|_E$, $\BB^k_E$, $\SS^{k-1}_E$, $\lambda_E$ and $\sigma_E$ the respective objects in an affine subspace $E \in A(d,k)$. This is the same notation as used in the book~\cite{SW}, to which we refer for further information.

 Furthermore, we use the symbol $\Delta_k(x_0,\dots,x_k)=\Vol_k([x_0,\dots,x_k])$ for the $k$-dimensional vol\-ume of the $k$-simplex spanned by the points $x_0,\dots,x_k \in \RR^d$ and $\nabla_k(x_1,\dots,x_k)$ for the $k$-dimensional volume of the parallelotope spanned by the vectors $x_1,\dots,x_k \in \RR^d$, respectively. These two quantities are related by
\begin{equation}\label{e7}
\Delta_d(x_0,\dots,x_k)=\frac{1}{k!} \nabla_d(x_1-x_0,\dots,x_k-x_0) \mbox{.}
\end{equation}

A particularly valuable tool in the main proof will be the affine Blaschke-Petkantschin formula; see \cite[Theorem 7.2.7]{SW}.
\begin{lemma}{(Affine Blaschke-Petkantschin formula)}\label{thm:ABP}\ \\
	Let $q\in \{ 1,\dots,d \}$ and $\varphi:\left( \RR^d \right)^{q+1} \rightarrow \RR$ be non-negative and measurable. Then
	\begin{align*}
	&\int\limits_{(\RR^d)^{q+1}}\varphi(x_1,\ldots,x_{q+1})\,\l_d^{q+1}(\dint(x_1,\ldots,x_{q+1})) \\
	&\qquad= b_{d,q} (q!)^{d-q} \int\limits_{A(d,q)}\int\limits_{E^{q+1}}\varphi(x_1,\ldots,x_{q+1})\,\Delta_q(x_1,\ldots,x_{q+1})^{d-q}\,\l_E^{q+1}(\dint(x_1,\ldots,x_{q+1}))\,\mu_q(\dint E)
	\end{align*}
	with the constant $b_{d,q}$ given by
	\[
	b_{d,q} := {\o_{d-q+1}\cdots\o_d\over\o_1\cdots\o_q},
	\]
and where $\o_k= k \k_k=\sigma_{k-1}(\SS^{k-1})$ is the surface area of $\SS^{k-1}$, $k\in\NN$.
\end{lemma}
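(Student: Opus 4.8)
The plan is to verify the identity by reducing the right-hand side, via the disintegration \eqref{eq:def_Haar_on_aff_Grass} of $\mu_q$, to an integral over $G(d,q)\times L^\perp\times L^{q+1}$, and then to recognise the left-hand side as the image of this integral under an explicit change of variables whose Jacobian produces the factor $(q!)^{d-q}\Delta_q^{d-q}$. Concretely, for a generic tuple $(x_1,\dots,x_{q+1})\in(\RR^d)^{q+1}$ the affine hull is a $q$-flat $E$; let $L\in G(d,q)$ be its direction space and $t\in L^\perp$ the common orthogonal projection $t=P_{L^\perp}x_1=\dots=P_{L^\perp}x_{q+1}$, and set $y_i=P_Lx_i\in L$, so that $x_i=t+y_i$. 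The assignment $(x_1,\dots,x_{q+1})\mapsto(L,t,y_1,\dots,y_{q+1})$ is, up to a $\lambda_d^{q+1}$-null set, a bijection onto the tuples with $y_1,\dots,y_{q+1}$ affinely spanning $L$, and a dimension count confirms $d(q+1)=q(d-q)+(d-q)+q(q+1)$. Since $\Delta_q(x_1,\dots,x_{q+1})=\Delta_q(y_1,\dots,y_{q+1})$ and $\lambda_E=\lambda_L$ under translation by $t$, it suffices to compute the Jacobian of this map.

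First I would compute that Jacobian. Differentiating the parametrisation $(L,t,y)\mapsto(x_i)$ at a reference configuration with $L=L_0:=\mathrm{span}(e_1,\dots,e_q)$, the variations split cleanly into a block-diagonal form. Moving the $y_i$ inside $L$ transports directly to the $L$-components of the $x_i$, contributing an identity block. The $L^\perp$-components of $(x_1,\dots,x_{q+1})$ are governed by a common infinitesimal shift $\delta t\in L^\perp$ and an infinitesimal rotation of $L$, encoded by $A\in\mathrm{Hom}(L_0,L_0^\perp)$, under which $x_i$ acquires the normal increment $A y_i$. Fixing an orthonormal basis $f_1,\dots,f_{d-q}$ of $L_0^\perp$ and writing $A y=\sum_j(v_j\cdot y)f_j$ with $v_j\in L_0$, the $f_j$-component of the perturbation of $x_i$ equals $a_j+v_j\cdot y_i$. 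Thus for each $j$ the block $(a_j,v_j)\mapsto(a_j+v_j\cdot y_i)_{i=1}^{q+1}$ has matrix with rows $(1,y_i^\top)$, whose determinant is $\pm q!\,\Delta_q(y_1,\dots,y_{q+1})$ after subtracting the first row; the $d-q$ normal directions decouple, so the Jacobian on the normal components is $(q!\,\Delta_q)^{d-q}$, matching the factor $(q!)^{d-q}\Delta_q^{d-q}$ in the statement.

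It remains to pin down the constant $b_{d,q}$, and this is the part I expect to be the main obstacle, since it requires matching the Haar probability measure $\nu_q$ on $G(d,q)$ against the Euclidean measure $\mathrm{d}A$ on $\mathrm{Hom}(L_0,L_0^\perp)$ used in the rotation block; the discrepancy is exactly the normalisation constant relating $\nu_q$ to the geometric volume of $G(d,q)$, which yields the ratio $b_{d,q}=\omega_{d-q+1}\cdots\omega_d/(\omega_1\cdots\omega_q)$. Rather than tracking this through the differential-geometric computation, I would fix $b_{d,q}$ by evaluating both sides on one convenient test function, e.g.\ the radial Gaussian $\varphi(x_1,\dots,x_{q+1})=\exp(-\tfrac12\sum_i\|x_i\|^2)$. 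For this $\varphi$ the left-hand side is the elementary integral $(2\pi)^{d(q+1)/2}$, while on the right-hand side the splitting $\|t+y_i\|^2=\|t\|^2+\|y_i\|^2$, the rotational invariance of the inner integral (so that $\nu_q$ integrates to $1$), and the Gaussian $t$-integral reduce everything to the known Gaussian moment $\int_{(\RR^q)^{q+1}}e^{-\frac12\sum\|y_i\|^2}\Delta_q^{d-q}\,\mathrm{d}y$; comparing the two sides then determines the constant.

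As an alternative route that avoids the Grassmannian bookkeeping, one may homogenise: the lift $x\mapsto\hat x=(x,1)\in\RRd1$ identifies $\mathrm{aff}(x_1,\dots,x_{q+1})$ with the linear subspace $\mathrm{span}(\hat x_1,\dots,\hat x_{q+1})$, so the statement would follow from the linear Blaschke-Petkantschin formula in $\RRd1$ together with the Jacobian of the correspondence between $(q+1)$-subspaces transversal to $\{x_{d+1}=0\}$ and affine $q$-flats of $\RR^d$. In any case the identity is classical; it is Theorem~7.2.7 in \cite{SW}.
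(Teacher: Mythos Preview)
The paper does not prove this lemma at all: it is stated as a tool and attributed to \cite[Theorem~7.2.7]{SW} without any argument. Your proposal therefore goes well beyond what the paper does. The sketch you give is a standard and correct route to the affine Blaschke--Petkantschin formula (parametrise by $(L,t,y_1,\dots,y_{q+1})$, compute the Jacobian blockwise to obtain $(q!\,\Delta_q)^{d-q}$, and fix the Grassmannian normalisation either by matching Haar measure or by a test-function argument), and your alternative via lifting to $\RR^{d+1}$ and the linear Blaschke--Petkantschin formula is also legitimate. Since the paper simply quotes the result, there is nothing to compare against; your closing remark that the identity is Theorem~7.2.7 in \cite{SW} is already the entirety of the paper's ``proof''.
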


Furthermore, we will make use of Kubota's formula; see \cite[Equations (6.11) and~(5.5)]{SW}.
\begin{lemma}{(Kubota's formula)}\label{thm:Kub}\ \\
	Let $K\subset \RR^d$ be a nonempty, compact, convex set. Let $k\in \{ 1,\dots,d-1 \}$. Then,
	\[
	V_k\left( K \right) = \binom{d}{k} \frac{\k_d}{\k_k \k_{d-k}} \int\limits_{G(d,k)} \Vol_k(K|L) \,\nu_k(\dint L) \mbox{,}
	\]
	where $K|L$ stands for the orthogonal projection of $K$ onto $L$.
\end{lemma}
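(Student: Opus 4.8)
The statement is classical (it is one of the Cauchy--Kubota formulae), so the plan is to identify the right-hand side as a valuation and invoke Hadwiger's characterisation theorem. Write
\[
\Phi_k(K):=\int\limits_{G(d,k)}\Vol_k(K|L)\,\nu_k(\dint L),
\]
defined for every nonempty compact convex set $K\subset\RR^d$. I would show that $\Phi_k$ is a continuous, rigid-motion invariant valuation which is homogeneous of degree $k$, and then read off from Hadwiger's theorem that $\Phi_k=c_k V_k$ for a single constant $c_k\ge 0$; the value of $c_k$ is afterwards pinned down by testing on the unit ball.

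The routine verifications come first. Continuity of $\Phi_k$ in the Hausdorff metric follows because $K\mapsto K|L$ is continuous, $\Vol_k$ is continuous on convex bodies inside $L$, and the integrand stays uniformly bounded when $K$ ranges over a bounded family, so dominated convergence applies. Translation invariance is clear since projecting a translate of $K$ only translates the projection within $L$, and rotation invariance follows from the invariance of $\nu_k$ together with the identity $(\rho K)|L=\rho\,(K|\rho^{-1}L)$ for a rotation $\rho$. Homogeneity of degree $k$ is immediate from $\Vol_k(\lambda K|L)=\lambda^k\Vol_k(K|L)$.

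The one genuinely substantive point, which I expect to be the main obstacle, is the valuation property. For a fixed subspace $L$ the map $K\mapsto\Vol_k(K|L)$ is in general not additive, so one cannot merely integrate a pointwise identity. The key lemma I would establish is that whenever $K_1$, $K_2$ and $K_1\cup K_2$ are all convex, then $(K_1\cap K_2)|L=(K_1|L)\cap(K_2|L)$ for every $L\in G(d,k)$. The inclusion ``$\subseteq$'' is trivial; for ``$\supseteq$'' one takes a point $y$ of the right-hand side, picks $z_1\in K_1$ and $z_2\in K_2$ projecting to $y$, and uses that the segment $[z_1,z_2]\subset K_1\cup K_2$ lies in the affine fibre $y+L^\perp$. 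The two closed convex sets $K_1\cap(y+L^\perp)$ and $K_2\cap(y+L^\perp)$ then cover this segment and contain its respective endpoints, so by connectedness they must meet, yielding a point of $K_1\cap K_2$ lying over $y$. Granting this lemma, the inclusion--exclusion identity for $\Vol_k(\,\cdot\,|L)$ reduces to the valuation property of $\Vol_k$ inside $L$, and integrating over $G(d,k)$ shows that $\Phi_k$ is indeed a valuation.

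Finally I would fix the constant by evaluating on $K=\BB^d$. For every $L\in G(d,k)$ the projection satisfies $\BB^d|L=\BB^k_L$, whence $\Vol_k(\BB^d|L)=\kappa_k$ and $\Phi_k(\BB^d)=\kappa_k$; on the other hand the classical value of the $k$-th intrinsic volume of the ball is $V_k(\BB^d)=\binom{d}{k}\kappa_d/\kappa_{d-k}$. Equating $\kappa_k=c_k\binom{d}{k}\kappa_d/\kappa_{d-k}$ gives $c_k=\kappa_k\kappa_{d-k}/\big(\binom{d}{k}\kappa_d\big)$, and solving $\Phi_k=c_kV_k$ for $V_k$ produces exactly the asserted identity $V_k(K)=\binom{d}{k}\frac{\kappa_d}{\kappa_k\kappa_{d-k}}\int_{G(d,k)}\Vol_k(K|L)\,\nu_k(\dint L)$. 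The only external inputs are Hadwiger's characterisation theorem and the intrinsic volumes of the ball, both standard and available in \cite{SW}.
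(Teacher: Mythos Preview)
Your argument is correct. The paper does not prove this lemma at all: it is stated as a standard tool and simply cited from Schneider--Weil \cite[Equations (6.11) and (5.5)]{SW}. Your route via Hadwiger's characterisation theorem is one of the classical proofs of the Cauchy--Kubota formulae, and the execution is sound, including the part you correctly flag as the only nontrivial step---showing that $(K_1\cap K_2)|L=(K_1|L)\cap(K_2|L)$ whenever $K_1\cup K_2$ is convex, which is exactly what is needed to make $K\mapsto\Vol_k(K|L)$ a valuation for each fixed $L$. The normalisation via $V_k(\BB^d)=\binom{d}{k}\kappa_d/\kappa_{d-k}$ is also correct. So there is nothing to compare against in the paper itself; you have supplied a self-contained proof where the authors chose to quote the literature.
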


\subsection{Projections of beta- and beta'-densities}
In the next lemma, we show that the beta-and beta$^\prime$-distributions on $\RR^d$ yield distributions of the same type (but with different parameters) when projected onto arbitrary linear subspaces. Fix some $k\in \{1,\dots,d-1\}$ and let $L \in G(d,k)$ be a $k$-dimensional linear subspace.
Since the beta- and beta$^\prime$-distribution are rotationally invariant it suffices to investigate the $k$-dimensional subspace
$$
L=\{x \in \RR^d: x_{k+1}=\dots=x_d = 0\}
$$
for the results to come. Furthermore, we will identify $L$ and $\RR^k$.   We will often use these observations implicitly and refrain from restating them at every step of the proof.

\begin{lemma}\label{lem1}
 Denote by $\pi_L: \RR^d\rightarrow L$ the orthogonal projection onto $L$.
	\begin{itemize}
		\item[(a)] If the random variable $X$ has density $f_{d,\b}$, then $\pi_L (X)$ has density $f_{k,\b+\frac{d-k}{2}}$.
		\item[(b)] If the random variable $X$ has density $\tilde{f}_{d,\b}$, then $\pi_L(X)$ has density $\tilde{f}_{k,\b-\frac{d-k}{2}}$.
	\end{itemize}
\end{lemma}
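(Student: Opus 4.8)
The plan is to compute the density of $\pi_L(X)$ directly by marginalising out the orthogonal complement $L^\perp$. Since the beta- and beta$^\prime$-distributions are rotationally invariant, we may use the identification $L = \{x \in \RR^d : x_{k+1} = \dots = x_d = 0\} \cong \RR^k$ as already set up before the lemma. Writing a point of $\RR^d$ as $(y,z)$ with $y \in \RR^k$ and $z \in \RR^{d-k}$, the projected density at $y$ is obtained by integrating the joint density over all $z \in \RR^{d-k}$. The crucial structural feature is that in both models the density depends on $x$ only through $\|x\|^2 = \|y\|^2 + \|z\|^2$, so the inner integral is a radially symmetric integral over $\RR^{d-k}$ that can be reduced to a one-dimensional integral via polar coordinates.

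For part (a), I would write
\[
f_{\pi_L(X)}(y) = c_{d,\b}\int_{\|y\|^2 + \|z\|^2 < 1} \bigl(1 - \|y\|^2 - \|z\|^2\bigr)^\b \,\dint z,
\]
which is nonzero only for $\|y\| < 1$. Passing to polar coordinates in $z$, with $r = \|z\|$ and surface measure $\o_{d-k} r^{d-k-1}\,\dint r$, the integral becomes $c_{d,\b}\,\o_{d-k}\int_0^{\sqrt{1-\|y\|^2}} (1 - \|y\|^2 - r^2)^\b r^{d-k-1}\,\dint r$. The substitution $r^2 = (1-\|y\|^2)t$ turns this into a Beta-function integral, yielding a factor $(1 - \|y\|^2)^{\b + (d-k)/2}$ times a constant expressed through Gamma functions. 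Comparing the resulting constant with $c_{k,\b + (d-k)/2}$ from \eqref{eq:c_d_beta} confirms that the projected density equals $f_{k,\b + \frac{d-k}{2}}$; the normalisation must match automatically, but I would verify it explicitly as a sanity check.

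For part (b), the computation is entirely analogous, starting from
\[
\tilde f_{\pi_L(X)}(y) = \tilde c_{d,\b}\int_{\RR^{d-k}} \bigl(1 + \|y\|^2 + \|z\|^2\bigr)^{-\b}\,\dint z,
\]
now integrated over all of $\RR^{d-k}$. After polar coordinates and the substitution $r^2 = (1 + \|y\|^2)t$, the integral factors as $(1 + \|y\|^2)^{-\b + (d-k)/2}$ times a constant of the form $\int_0^\infty t^{(d-k)/2 - 1}(1+t)^{-\b}\,\dint t$, which is a Beta integral convergent precisely when $\b > \frac{d-k}{2}$; since $\b > \frac d2$ this holds, and moreover the shifted parameter satisfies $\b - \frac{d-k}{2} > \frac{k}{2}$, so the target density $\tilde f_{k,\b - \frac{d-k}{2}}$ is well defined. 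Again I would check that the constant collapses to $\tilde c_{k,\b - \frac{d-k}{2}}$.

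The computation is essentially routine once the radial reduction is set up, so there is no deep obstacle; the main point requiring care is the bookkeeping of Gamma-function factors to confirm that the exponent shift $\b \mapsto \b \pm \frac{d-k}{2}$ is accompanied by exactly the right normalising constant, rather than merely up to a multiplicative factor. A clean way to avoid redoing the normalisation by hand is to observe that the radial integral already forces the projected density to be proportional to $(1-\|y\|^2)^{\b + (d-k)/2}$ (respectively $(1+\|y\|^2)^{-\b + (d-k)/2}$) on its support, so it must be a beta- (respectively beta$^\prime$-) density of the stated parameter, and since both it and $f_{k,\b+\frac{d-k}{2}}$ integrate to $1$ they coincide. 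I would present the explicit constant nonetheless, since the Gamma-function identity is short and makes the statement self-contained.
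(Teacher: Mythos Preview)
Your argument is correct. The one structural difference from the paper's proof is that the paper reduces to the case $k=d-1$ (integrating out a single coordinate via the one-variable substitution $y=x_d/\sqrt{1-r^2}$, respectively $y=x_d/\sqrt{1+r^2}$) and then invokes induction on the dimension, whereas you marginalise out all $d-k$ orthogonal coordinates in a single step using polar coordinates in $\RR^{d-k}$ and a Beta integral. Both routes are short; the paper's one-coordinate-at-a-time version avoids polar coordinates and the Beta function entirely, while yours reaches the general $k$ directly without the inductive scaffolding. The paper also makes the same observation you do, that since the marginal is visibly proportional to $(1\mp\|y\|^2)^{\b\pm(d-k)/2}$ and integrates to $1$, the normalising constant need not be checked by hand.
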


\begin{proof}
Both, for (a) and (b) it suffices to consider the case $k=d-1$ because then we can argue by induction.
So, let $L=\{ x \in \RR^d: x_d=0 \}$, which is identified with $\RR^{d-1}$, as explained before.

Let us prove (a). Fix some $x^*= (x_1^*,\ldots, x_{d-1}^*) \in \BB^{d-1}$ with Euclidean norm $r := \|x^*\|_{d-1}\in [0,1)$. The pre-images of $x^*$ under the projection map $\pi_L$ have the form $x= (x_1^*,\ldots,x_{d-1}^*, x_d)$ with $x_d\in\RR$, but since we are interested only in $x\in \BB^d$, we obtain the restriction $|x_d| \leq \sqrt{1-r^2}$.  It holds that $\|x\|_{d}^2 = r^2 + x_d^2$ .
Thus, the density of $\pi_L(X)$ at $x^*$ is given by
\begin{align*}
c_{d,\b} \int \limits_{-\sqrt{1-r^2}}^{+\sqrt{1-r^2}}  \left( 1- \|x\|_d^2 \right)^\b \dint x_d
&=
c_{d,\b} \int \limits_{-\sqrt{1-r^2}}^{+\sqrt{1-r^2}}  \left( 1- r^2 - x_d^2 \right)^\b \dint x_d\\
	&=
c_{d,\b} \left( 1-r^2 \right)^\b \int \limits_{-\sqrt{1-r^2}}^{+\sqrt{1-r^2}} \left( 1- \frac{x_d^2}{1-r^2} \right)^\b \dint x_d \\
	&=c_{d,\b}  \left( 1-r^2 \right)^{\b+\frac 12}\int \limits_{-1}^{1} \left( 1- y^2 \right)^\b \dint y,
\end{align*}
where we used the transformation $y=x_d/\sqrt{1-r^2}$. Hence, $\pi_L(X)$ has density $f_{d-1,\b+\frac{1}{2}}$ and we do not even need to check that
$
c_{d,\b} \int\limits_{-1}^{1} \left( 1- y^2 \right)^\b\,\dint y = c_{d-1, \b + \frac 12},
$
because the result must be a probability density.
Inductive application of this result yields the desired statement for arbitrary dimensions $k$.

In the case of the beta$^\prime$-distribution we can apply almost  the same argument. Fix some $x^* = (x_1^*,\ldots, x_{d-1}^*) \in \RR^{d-1}$ with Euclidean norm $r := \|x^*\|_{d-1} \geq 0$. The pre-images of $x^*$ under the projection $\pi_L$ have the form $(x_1^*,\ldots,x_{d-1}^*, x_d)$ with $x_d\in\RR$.
Then, the density of $\pi_L(X)$ at $x^*$ is given by
\begin{align*}
\tilde c_{d,\b} \int \limits_{-\infty}^{+\infty}  \left( 1 + \|x\|_d^2 \right)^{-\b} \dint x_d
&=
\tilde{c}_{d,\b} \int\limits_{-\infty}^{\infty} \left( 1 + r^2 + x_d^2 \right)^{-\b} \dint x_d \\
&=
\tilde{c}_{d,\b} \left( 1 + r^2 \right)^{-\b} \int\limits_{-\infty}^{\infty} \left( 1 + \frac{x_d^2}{1+r^2} \right)^{-\b} \dint x_d \\
&=
\tilde c_{d,\b}  \left( 1+r^2 \right)^{-(\b-\frac 12)}\int \limits_{-\infty}^{\infty} \left( 1+ y^2 \right)^{-\b} \dint y,
\end{align*}
where we used the transformation $y=x_d/\sqrt{1+r^2}$.	It follows that $\pi_L(X)$ has density $\tilde{f}_{d-1,\b-\frac{1}{2}}$. The statement in the case of general dimension $k$ again follows by induction.
\end{proof}

\subsection{Probability contents of half-spaces and slabs}
The probability content $\mathrm{PC}_\b(A)$, respectively $\widetilde{\mathrm{PC}}_{\b}(A)$,  of a measurable set $A\subset \RR^d$ is  the probability that a random vector with a beta, respectively beta'-distribution, attains a value in $A$.
Consider the hyperplane $E_0=\{ x \in \RR^d: x_d=0 \}\in G(d,d-1)$. Denote by $E_h=\{ x \in \RR^d: x_d=h \} \in A(d,d-1)$, $h \in \RR$, the affine hyperplanes parallel to $E_0$. Later on, we will need the $d$-dimensional probability content of the half-spaces
$$
E_h^+ = \{x\in\RR^d : x_d \geq h\} \qquad\text{and}\qquad    E_h^- = \{x\in\RR^d : x_d \leq h\}
$$
with respect to the beta- and beta'-distribution. 
Hence, we are interested in the quantities
\[
	\mathrm{PC}_\b\left(E_h^+\right) = \int\limits_{h}^{1} \int\limits_{E_t} f_{d,\beta}(x) \,\lambda_{E_t}(\dint x) \dint t ,\qquad \mathrm{PC}_\b\left(E_h^-\right) = \int\limits_{-1}^{h} \int\limits_{E_t} f_{d,\beta}(x) \,\lambda_{E_t}(\dint x) \dint t
\]
and
\[
	\widetilde{\mathrm{PC}}_{\b}\left(E_h^+\right) = \int\limits_{h}^{\infty} \int\limits_{E_t} \tilde{f}_{d,\beta}(x) \,\lambda_{E_t}(\dint x) \dint t , \qquad \widetilde{\mathrm{PC}}_{\b}\left(E_h^-\right) = \int\limits_{-\infty}^{h} \int\limits_{E_t} \tilde{f}_{d,\beta}(x) \,\lambda_{E_t}(\dint x) \dint t
\]
respectively. Furthermore, for $h\geq 0$ we will be interested in the probability content of the slab between $E_h$ and $E_{-h}$, that is,
\[
	\mathrm{PC}_{\b}\left(E_h^- \cap E_{-h}^+\right) = \int\limits_{-h}^{h} \int\limits_{E_t} f_{d,\beta}(x) \,\lambda_{E_t}(\dint x) \dint t
\]
and
\[
	\widetilde{\mathrm{PC}}_{\b^\prime}\left(E_h^- \cap E_{-h}^+\right) = \int\limits_{-h}^{h} \int\limits_{E_t} \tilde{f}_{d,\beta}(x) \,\lambda_{E_t}(\dint x) \dint t,
\]
respectively. Recall from~\eqref{eq:def_F_d_beta} and~\eqref{eq:def_F_d_beta_prime}  that we denote the distribution function of the one-dimensional beta- and beta$^\prime$-distribution by $F_{1,\b}$ and $\tilde{F}_{1,\b}$, respectively.

\begin{lemma}\label{lem2}
Consider the affine hyperplane $E_h=\{ x \in \RR^d: x_d=h \}$ with $h \in \RR$. In the case of the beta-distribution on $\BB^d$ with parameter $\beta>-1$, we have
	\[
	 \mathrm{PC}_\b\left(E_h^+\right)  = 1- F_{1,\b+\frac{d-1}{2}}(h),\qquad \mathrm{PC}_\b\left(E_h^-\right)  = F_{1,\b+\frac{d-1}{2}}(h), \qquad h\in [-1,1],
	\]
	and
	\[
		\mathrm{PC}_\b\left(E_h^- \cap E_{-h}^+\right)  = F_{1,\b+\frac{d-1}{2}}(h) - F_{1,\b+\frac{d-1}{2}}(-h), \qquad h\in [0,1].
	\]
Similarly, in the case of the beta$^\prime$-distribution on $\RR^d$ with parameter $\beta>\frac d2$, it holds that
	\[
	\mathrm{PC}_{\b^\prime}\left(E_h^+\right) = 1- \tilde{F}_{1,\b-\frac{d-1}{2}}(h),\qquad \mathrm{PC}_{\b^\prime}\left(E_h^-\right) = \tilde{F}_{1,\b-\frac{d-1}{2}}(h), \qquad h\in\RR,
	\]
	and
	\[
	\mathrm{PC}_{\b^\prime}\left(E_h^- \cap E_{-h}^+\right) = \tilde{F}_{1,\b-\frac{d-1}{2}}(h) - \tilde{F}_{1,\b-\frac{d-1}{2}}(-h), \qquad h\geq 0.
	\]
\end{lemma}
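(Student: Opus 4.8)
The plan is to reduce every assertion to the one-dimensional marginal of the last coordinate and then invoke \Cref{lem1}. First I would observe that the probability content of the half-space $E_h^-$ is nothing but the probability that the $d$-th coordinate $X_d$ of a beta-distributed vector $X=(X_1,\dots,X_d)$ falls below $h$. Indeed, in the defining integral $\mathrm{PC}_\b(E_h^-)=\int_{-1}^{h}\int_{E_t} f_{d,\b}(x)\,\lambda_{E_t}(\dint x)\,\dint t$ the inner integration over the slice $E_t$ integrates out the first $d-1$ coordinates and leaves precisely the marginal density of $X_d$ evaluated at $t$; hence $\mathrm{PC}_\b(E_h^-)=\PP(X_d\le h)$.

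Next I would identify this marginal distribution. The coordinate $X_d$ is the image of $X$ under the orthogonal projection $\pi_L$ onto the line $L=\{x\in\RR^d:x_1=\dots=x_{d-1}=0\}$, which by rotational invariance of the beta-distribution is covered by \Cref{lem1}. Applying part (a) with $k=1$ shows that $\pi_L(X)$ has density $f_{1,\b+\frac{d-1}{2}}$, so that by the definition \eqref{eq:def_F_d_beta} of the one-dimensional distribution function
\[
\mathrm{PC}_\b(E_h^-)=\PP(X_d\le h)=F_{1,\b+\frac{d-1}{2}}(h).
\]
The formula for $E_h^+$ then follows by passing to complements (the distribution is absolutely continuous, so the boundary hyperplane $E_h$ carries no mass), and the slab formula follows by writing $E_h^-\cap E_{-h}^+=\{-h\le X_d\le h\}$ as the difference of the two corresponding events, giving $F_{1,\b+\frac{d-1}{2}}(h)-F_{1,\b+\frac{d-1}{2}}(-h)$.

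Finally, the beta$^\prime$ case is entirely parallel: I would invoke part (b) of \Cref{lem1}, which yields that the last coordinate of a beta$^\prime$-distributed vector has density $\tilde f_{1,\b-\frac{d-1}{2}}$, so that $\tilde F_{1,\b-\frac{d-1}{2}}$ replaces $F_{1,\b+\frac{d-1}{2}}$ throughout. There is essentially no serious obstacle here, since the whole arithmetic content of the lemma — the shift of the parameter from $\b$ to $\b\pm\frac{d-1}{2}$ — is already packaged into the marginalization statement of \Cref{lem1}. The only minor points to verify are the Fubini-type reading-off of the marginal density from the sliced integral and the fact that the standing assumptions $\b>-1$ (respectively $\b>\frac d2$) ensure the integrability needed for this interchange.
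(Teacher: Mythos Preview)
Your proposal is correct and follows essentially the same approach as the paper: both identify $\mathrm{PC}_\b(E_h^-)$ with $\PP(\pi_L(X)\le h)$ for the projection onto the line $L=\{x_1=\dots=x_{d-1}=0\}$, invoke \Cref{lem1} with $k=1$ to determine the marginal density, and then obtain the remaining formulas by complementation and differencing. Your additional remarks about Fubini and integrability are harmless elaborations of the same argument.
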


\begin{proof}
Let $X$ be a random point with density $f_{d,\b}$. In order to calculate the probability contents of $E_h^+$ and $E_h^-$ we project $X$ onto the line $$
L:=E_0^\perp = \{x_1=\ldots=x_{d-1} =0\},
$$
that is, the orthogonal complement of $E_0$. Clearly, $X\in E_h^+$ is equivalent to $\pi_L(X) \geq h$, whereas $X\in E_h^-$ is equivalent to $\pi_L(X) \leq h$.  From \Cref{lem1} we know that $\pi_{L}(X)$ has the one-dimensional density $f_{1,\b+\frac{d-1}{2}}$. Hence,
	\[
		\mathrm{PC}_\beta(E_h^-) = \int\limits_{-\infty}^{h} f_{1,\b+\frac{d-1}{2}}(x)\, \dint x  = c_{1,\b+\frac{d-1}{2}} \int\limits_{-1}^{h} \left( 1- x^2 \right)^{\b+\frac{d-1}{2}} \dint x = F_{1,\b+\frac{d-1}{2}}(h).
	\]
To complete the proof in the beta case, observe that $\mathrm{PC}_\b(E_h^+)=1-\mathrm{PC}_\beta(E^-)$ and $\mathrm{PC}_\b(E_h^- \cap E_{-h}^+)=\mathrm{PC}_\beta(E_h^-)-\mathrm{PC}_\beta(E_{-h}^-)$.

Similarly, if $X$ is a random variable with density $\tilde{f}_{d,\b}$, then, by \Cref{lem1}, $\pi_{L}(X)$ has density $\tilde{f}_{1,\b-\frac{d-1}{2}}$ and we get the corresponding results for the beta$^\prime$-distribution.
\end{proof}

\subsection{Random simplices in affine hyperplanes}

We will need the moments of the volume of a random simplex chosen according to the density $f_{d,\beta}$ or $\tilde f_{d,\beta}$ restricted to an affine hyperplane. Recall the definitions of $\EE_\beta(\Delta_d^\kappa)$ and $\widetilde{\EE}_\beta(\Delta_d^\kappa)$ from \Cref{lem:Miles}.

\begin{lemma}\label{lem3}
Let $E \in A(d,d-1)$ be an affine hyperplane at distance $h$ from the origin. In the case of the beta-distribution with parameter $\beta>-1$, for all $h\in [0,1]$ and $\kappa\in[0,\infty)$ we have
\begin{align*}
		\int \limits_{E^d} \Delta_{d-1}^{\kappa} \left( x_1,\dots,x_d \right) &\left(\prod_{i=1}^{d} f_{d,\b}(x_i)\right) \lambda_E^d\left( \dint \left( x_1,\dots,x_d \right) \right) \\ &=  \frac{c_{d,\b}^d}{c_{d-1,\b}^d} \left(1-h^2\right)^{d\b + \frac{d-1}{2}(d+\kappa)} \EE_\b\left(\Delta_{d-1}^\kappa\right).
\end{align*}
Similarly, in the case of the beta$^\prime$-distribution with parameter $\beta>\frac d2$, for all $h\geq 0$ and $\kappa \in [0, 2\beta -d)$ we have
\begin{align*}
		\int \limits_{E^d} \Delta_{d-1}^{\kappa}\left( x_1,\dots,x_d \right) &\left(\prod_{i=1}^{d} \tilde{f}_{d,\b}(x_i) \right)\lambda_E^d\left( \dint \left( x_1,\dots,x_d \right) \right) \\ &= \frac{\tilde{c}_{d,\b}^d}{\tilde{c}_{d-1,\b}^d} \left(1+h^2\right)^{-d\b + \frac{d-1}{2}(d+\kappa)} \widetilde{\EE}_{\b}\left(\Delta_{d-1}^\kappa\right).
\end{align*}
\end{lemma}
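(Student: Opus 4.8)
\textbf{The approach.} The plan is to reduce to a canonical hyperplane and then rescale it onto the unit ball so that the integral collapses to the known simplex moment $\EE_\b(\Delta_{d-1}^\kappa)$ from \Cref{lem:Miles}. Since both densities are rotationally invariant, and $\Delta_{d-1}$ is translation- and rotation-invariant, I may assume $E=E_h=\{x\in\RR^d:x_d=h\}$ and identify $E_h$ with $\RR^{d-1}$ via the first $d-1$ coordinates. Writing a point of $E_h$ as $(y,h)$ with $y\in\RR^{d-1}$ and $r=\|y\|_{d-1}$, the key algebraic identity is the factorisation $1-\|x\|_d^2 = 1-r^2-h^2 = (1-h^2)\bigl(1-\tfrac{r^2}{1-h^2}\bigr)$, valid for $r^2\le 1-h^2$. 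This suggests the substitution $y_i=\sqrt{1-h^2}\,z_i$, which carries each copy of $E_h\cap\BB^d$ onto $\BB^{d-1}$.

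\textbf{Bookkeeping of the scaling.} Under this substitution three sources of powers of $(1-h^2)$ appear, and the whole lemma is really the assertion that they add up correctly. First, each of the $d$ density factors contributes $f_{d,\b}(x_i)=c_{d,\b}(1-h^2)^{\b}\,(1-\|z_i\|^2)^{\b}$, giving $c_{d,\b}^d(1-h^2)^{d\b}$. Second, the $d$ Jacobians of the maps $y_i=\sqrt{1-h^2}\,z_i$ in $\RR^{d-1}$ contribute $(1-h^2)^{d(d-1)/2}$. Third, the $(d-1)$-dimensional simplex volume is homogeneous of degree $d-1$ under the common dilation $z\mapsto\sqrt{1-h^2}\,z$, so $\Delta_{d-1}^\kappa$ contributes a factor $(1-h^2)^{\kappa(d-1)/2}$. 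Collecting these gives the exponent
\[
d\b+\frac{d(d-1)}{2}+\frac{\kappa(d-1)}{2}=d\b+\frac{d-1}{2}(d+\kappa),
\]
exactly as claimed. What remains is the integral of $\Delta_{d-1}^\kappa(z_1,\dots,z_d)\prod_{i=1}^d(1-\|z_i\|^2)^{\b}$ over $(\BB^{d-1})^d$, and by the very definition of the beta-distribution on $\BB^{d-1}$ (whose density is $c_{d-1,\b}(1-\|z\|^2)^{\b}$) this equals $c_{d-1,\b}^{-d}\,\EE_\b(\Delta_{d-1}^\kappa)$. Multiplying by the prefactor $c_{d,\b}^d$ produces the stated constant $c_{d,\b}^d/c_{d-1,\b}^d$.

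\textbf{The beta$^\prime$ case and the main point of care.} The beta$^\prime$ computation is identical after replacing the factorisation by $1+\|x\|_d^2=(1+h^2)\bigl(1+\tfrac{r^2}{1+h^2}\bigr)$ and substituting $y_i=\sqrt{1+h^2}\,z_i$, which now maps $E_h$ (all of it, since the support is $\RR^{d-1}$) onto $\RR^{d-1}$. The three scalings yield $(1+h^2)^{-d\b}$, $(1+h^2)^{d(d-1)/2}$ and $(1+h^2)^{\kappa(d-1)/2}$, summing to the exponent $-d\b+\frac{d-1}{2}(d+\kappa)$, and the residual integral is $\tilde c_{d-1,\b}^{-d}\,\widetilde{\EE}_\b(\Delta_{d-1}^\kappa)$. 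The one genuinely substantive point, rather than an obstacle, is integrability in the beta$^\prime$ case: the substituted integrand decays like a power of $1+\|z_i\|^2$, and the argument is only valid when $\widetilde{\EE}_\b(\Delta_{d-1}^\kappa)$ is finite, which by \Cref{lem:Miles}(b) is precisely the range $\kappa\in[0,2\b-d)$ assumed in the statement. The rest is the routine exponent arithmetic carried out above, the only place where a sign or a factor of $\tfrac12$ could go astray being the simultaneous accounting of the $d$ Jacobians and the degree-$(d-1)$ homogeneity of the simplex volume.
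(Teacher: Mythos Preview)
Your argument is correct and follows essentially the same route as the paper's proof: reduce by rotational invariance to the hyperplane $\{x_d=h\}$, use the factorisation $1-\|x\|^2=(1-h^2)(1-\|z\|^2)$ under the rescaling $y_i=\sqrt{1-h^2}\,z_i$, and collect the three powers of $1-h^2$ coming from the $d$ density factors, the $d$ Jacobians in $\RR^{d-1}$, and the homogeneity of $\Delta_{d-1}^\kappa$. Your explicit remark that finiteness of $\widetilde{\EE}_\b(\Delta_{d-1}^\kappa)$ requires $\kappa<2\b-d$ is a nice addition not spelled out in the paper.
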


\begin{proof}
Without loss of generality we take $E=\{z \in \RR^d: z_d=h\} \in A(d,d-1)$. Consider also the linear hyperplane $L=\{z \in \RR^d: z_d=0\} \in G(d,d-1)$ which is parallel to $E$. With $z^*=\pi_L(z)$, for $z \in E$, we have that $\left\| z \right\|_d^2 = \left\| z^* \right\|_{d-1}^2 + h^2$. Hence, for all $h\in [0,1]$,
	\begin{align*}
		&\int \limits_{E^d} \Delta_{d-1}^{\kappa}\left( x_1,\dots,x_d \right) \prod_{i=1}^{d} f_{d,\b}(x_i) \lambda_E^d\left( \dint \left( x_1,\dots,x_d \right) \right)\\
		&= c_{d,\b}^d \int \limits_{\left( E \cap \BB^d \right)^d} \Delta_{d-1}^{\kappa}\left( x_1,\dots,x_d \right)  \prod_{i=1}^{d} \left(1-\left\| x_i \right\|_d^2 \right)^\b \lambda_E^d\left( \dint \left( x_1,\dots,x_d \right) \right) \\
		&= c_{d,\b}^d \int \limits_{\left( L \cap \BB^d(\sqrt{1-h^2}) \right)^d} \Delta_{d-1}^{\kappa}\left( {x}_1^*,\dots,{x}_d^* \right)  \prod_{i=1}^{d} \left(1-\left\| {x}_i^* \right\|_{d-1}^2 - h^2 \right)^\b \lambda_L^d\left( \dint \left( {x}_1^*,\dots,{x}_d^* \right) \right) \\
		&= c_{d,\b}^d \left(1-h^2\right)^{d\b} \int \limits_{\left( L \cap \BB^d(\sqrt{1-h^2}) \right)^d} \Delta_{d-1}^{\kappa}\left( {x}_1^*,\dots,{x}_d^* \right)  \prod_{i=1}^{d} \left(1-\frac{\left\| {x}_i^* \right\|_{d-1}^2}{1-h^2}\right)^\b \lambda_L^d\left( \dint \left( {x}_1^*,\dots,{x}_d^* \right) \right) \\
		&= c_{d,\b}^d \left(1-h^2\right)^{d\b+\frac{d-1}{2}(d+\kappa)} \int \limits_{\left( \BB^{d-1} \right)^d} \Delta_{d-1}^{\kappa}\left( y_1,\dots,y_d \right)  \prod_{i=1}^{d} \left(1-\left\| y_i \right\|_{d-1}^2\right)^\b \lambda_L^d\left( \dint \left( y_1,\dots,y_d \right) \right) \\
		&=\frac{c_{d,\b}^d}{c_{d-1,\b}^d} \left(1-h^2\right)^{d\b+\frac{d-1}{2}(d+\kappa)} \int \limits_{\left( \BB^{d-1} \right)^d} \Delta_{d-1}^{\kappa}\left( y_1,\dots,y_d \right)  \prod_{i=1}^{d} f_{d-1,\b}(y_i) \lambda_L^d\left( \dint \left( y_1,\dots,y_d \right) \right) \\
		&=\frac{c_{d,\b}^d}{c_{d-1,\b}^d} \left(1-h^2\right)^{d\b+\frac{d-1}{2}(d+\kappa)} \EE_\b\left(\Delta_{d-1}^\kappa\right),
	\end{align*}
where we used the transformation $y_i=\left(1-h^2\right)^{-\frac{1}{2}}{x}^*_i$ for $i=1,\dots,d$.
	
The result for the beta$^\prime$-distribution is derived analogously by using the transformation $y_i=\left(1+h^2\right)^{-\frac{1}{2}}{x}_i^*$, for $i=1,\dots,d$, and by suitably adapting the range of integration.
\end{proof}

\section{Proof of Theorems~\ref{t1} and~\ref{t1_prime}}\label{sec:Proof1}
Let $X_1,\dots,X_n$ be independent and beta-distributed random points in $\BB^d$ with parameter $\beta$. We start with the polytope $P_{n,d}^\b = [X_1,\ldots,X_n]$. We have
	\begin{align*}
	\EE &T^{d,d-1}_{a,b} \left( P_{n,d}^\beta \right) \\
	&= \EE \left( \sum_{ 1\leq i_1<\dots<i_d \leq n } \ind \left( [X_{i_1},\dots,X_{i_d}] \in \mathcal{F}_{d-1}\left( P_{n,d}^\beta \right) \right) \eta^a\left([X_{i_1},\dots,X_{i_d}]\right) \Delta^b_{d-1}\left( X_{i_1},\dots,X_{i_d} \right) \right) \\
	&= \binom{n}{d} \EE \left( \ind\left( [X_1,\dots,X_d] \in \mathcal{F}_{d-1}\left( P_{n,d}^\beta \right) \right) \eta^a\left([X_1,\dots,X_d]\right) \Delta^b_{d-1}\left( X_1,\dots,X_d \right) \right) \\
	&= \binom{n}{d} \int \limits_{\left( \RR^d \right)^d} \PP\left( [x_1,\dots,x_d] \in \mathcal{F}_{d-1}\left( P_{n,d}^\beta \right) \Big| X_1=x_1,\dots,X_d=x_d \right) \eta^a\left([x_1,\dots,x_d]\right) \\
	&\qquad\qquad\qquad\times \Delta^b_{d-1}\left( x_1,\dots,x_d \right) \left( \prod_{i=1}^{d} f_{d,\beta}(x_i) \right) \lambda_d^d\left( \dint \left( x_1,\dots,x_d \right) \right),
	\end{align*}
where in the last step we conditioned on the event $\{X_1=x_1,\ldots, X_d=x_d\}$ and used the formula for the total probability.
Applying the affine Blaschke-Petkantschin formula stated in \Cref{thm:ABP} with $q=d-1$, we obtain
\begin{align*}
\EE &T^{d,d-1}_{a,b}
=
\binom{n}{d} (d-1)! \frac{d \kappa_{d}}{2} \int \limits_{A(d,d-1)} \int \limits_{E^d} \PP\left( [x_1,\dots,x_d] \in \mathcal{F}_{d-1}\left( P_{n,d}^\beta \right) \Big| X_1=x_1,\dots,X_d=x_d \right)\\
&\qquad \times \eta^a\left([x_1,\dots,x_d]\right) \Delta_{d-1}^{b+1}\left( x_1,\dots,x_d \right) \left( \prod_{i=1}^{d} f_{d,\beta}(x_i) \right) \lambda_E^d\left( \dint \left( x_1,\dots,x_d \right) \right) \mu_{d-1}(\dint E) \mbox{.}
\end{align*}

We denote by $h$ the distance from $E= \mathrm{aff} (x_1,\ldots,x_d)$ to the origin. Note that the conditional probability in the right-hand side is the probability that all $X_{d+1},\dots,X_{n}$ lie in either the half-space $E^+$ or the half-space $E^-$. By using rotation invariance of the density $f_{d,\beta}$ we may assume that $E$ has the form $E_h$ and then apply \Cref{lem2} to get
\begin{align*}
		&\PP\left( [x_1,\dots,x_d] \in \mathcal{F}_{d-1}\left( P_{n,d}^\beta \right) \Big| X_1=x_1,\dots,X_d=x_d \right)\\
		&\qquad\qquad\qquad\qquad = \left(1-F_{1,\b+\frac{d-1}{2}}(h)\right)^{n-d}+ F_{1,\b+\frac{d-1}{2}}(h)^{n-d}.
\end{align*}
	Since the integrand is rotationally invariant we can use formula~\eqref{eq:def_Haar_on_aff_Grass} to rewrite the integration over $A(d,d-1)$ as
	\begin{align*}
	\EE T^{d,d-1}_{a,b} \left( P_{n,d}^\beta \right) &=\binom{n}{d} d! \kappa_d \int \limits_{0}^{1} \left( \left(1-F_{1,\b+\frac{d-1}{2}}(h)\right)^{n-d}+ F_{1,\b+\frac{d-1}{2}}(h)^{n-d} \right) h^a \\
	&\qquad\qquad\times \int \limits_{E^d} \Delta_{d-1}^{b+1}\left( x_1,\dots,x_d \right) \left( \prod_{i=1}^{d} f_{d,\beta}(x_i) \right) \lambda_E^d\left( \dint \left( x_1,\dots,x_d \right) \right) \dint h \\
	&=\binom{n}{d} d! \k_d \left( \frac{c_{d,\b}}{c_{d-1,\b}} \right)^d \EE_\b\left( \Delta_{d-1}^{b+1} \right) \int\limits_{0}^{1} h^a \left(1-h^2\right)^{d\b + \frac{d-1}{2}(d+b+1)} \\
	& \qquad\qquad\times \left( \left(1-F_{1,\b+\frac{d-1}{2}}(h)\right)^{n-d}+ F_{1,\b+\frac{d-1}{2}}(h)^{n-d} \right) \dint h,
	\end{align*}
	where the second equality follows from \Cref{lem3}. In the next step we exploit the identities $f_{1,\b+\frac{d-1}{2}}(h) = f_{1,\b+\frac{d-1}{2}}(-h)$ and $1 - F_{1,\b+\frac{d-1}{2}}(h) = F_{1,\b+\frac{d-1}{2}}(-h)$ to rewrite the integral as
	\begin{align*}
		\EE T^{d,d-1}_{a,b} \left( P_{n,d}^\beta \right) &= C_{n,d}^{\b,b} \int\limits_{-1}^{1} \left|h\right|^a \left(1-h^2\right)^{d\b + \frac{d-1}{2}(d+b+1)} F_{1,\b+\frac{d-1}{2}}(h)^{n-d} \,\dint h,
	\end{align*}
	where
\begin{equation}\label{eq:def_C_n_d}
C_{n,d}^{\b,b}=\binom{n}{d} d! \k_d \EE_\b\left( \Delta_{d-1}^{b+1} \right) \left( \frac{c_{d,\b}}{c_{d-1,\b}} \right)^d.
\end{equation}
	
Slight adaptation of this proof yields the result for the symmetric polytope $S_{n,d}^\b= [\pm X_1,\ldots,\pm X_d]$ as follows:
	\begin{align*}
		\EE &T^{d,d-1}_{a,b} \left( S_{n,d}^\beta \right) \\
		&= \EE \Bigg( \sum_{ 1\leq i_1<\dots<i_d \leq n } \sum_{j_1,\dots,j_d \in \{0,1\}} \ind\left( \left[(-1)^{j_1}X_{i_1},\dots,(-1)^{j_d}X_{i_d}\right] \in \mathcal{F}_{d-1}\left( S_{n,d}^\beta \right) \right) \\
		&\qquad\qquad\times \eta^a\left(\left[(-1)^{j_1}X_{i_1},\dots,(-1)^{j_d}X_{i_d}\right]\right) \Delta^b_{d-1}\left( (-1)^{j_1}X_{i_1},\dots,(-1)^{j_d}X_{i_d} \right) \Bigg)  \\
		&= 2 ^d \EE \left( \sum_{ 1\leq i_1<\dots<i_d \leq n } \ind\left( [X_{i_1},\dots,X_{i_d}] \in \mathcal{F}_{d-1}\left( S_{n,d}^\beta \right) \right) \eta^a\left([X_{i_1},\dots,X_{i_d}]\right) \Delta^b_{d-1}\left( X_{i_1},\dots,X_{i_d} \right) \right) \\
		&= 2^d \binom{n}{d} \EE \left( \ind\left( [X_1,\dots,X_d] \in \mathcal{F}_{d-1}\left( S_{n,d}^\beta \right) \right) \eta^a\left([X_1,\dots,X_d]\right) \Delta^b_{d-1}\left( X_1,\dots,X_d \right) \right) \\
		&= 2^d \binom{n}{d} \int \limits_{\left( \RR^d \right)^d} \PP\left( [X_1,\dots,X_d] \in \mathcal{F}_{d-1}\left( S_{n,d}^\beta \right) \Big| X_1=x_1,\dots,X_d=x_d \right) \eta^a\left([x_1,\dots,x_d]\right) \\
		&\qquad\qquad\qquad\times \Delta^b_{d-1}\left( x_1,\dots,x_d \right) \left( \prod_{i=1}^{d} f_{d,\beta}(x_i) \right) \lambda_d^d\left( \dint \left( x_1,\dots,x_d \right) \right).
\end{align*}
Applying the affine Blaschke-Petkantschin formula, see \Cref{thm:ABP}, we get
\begin{align*}
\EE &T^{d,d-1}_{a,b} \left( S_{n,d}^\beta \right) =
2^d \binom{n}{d} (d-1)! \frac{d \kappa_{d}}{2} \\
&\qquad\times\int \limits_{A(d,d-1)} \int \limits_{E^d} \PP\left( [x_1,\dots,x_d] \in \mathcal{F}_{d-1}\left( S_{n,d}^\beta \right) \Big| X_1=x_1,\dots,X_d=x_d \right)\\
		&\qquad\times \eta^a\left([x_1,\dots,x_d]\right) \Delta_{d-1}^{b+1}\left( x_1,\dots,x_d \right) \left( \prod_{i=1}^{d} f_{d,\beta}(x_i) \right) \lambda_E^d\left( \dint \left( x_1,\dots,x_d \right) \right) \mu_{d-1}(\dint E) \mbox{.}
	\end{align*}
Let $h\in [0,1]$ be the distance from $E= \mathrm{aff} (x_1,\ldots,x_d)$ to the origin. The conditional probability on the right-hand side is the probability that the points $X_{d+1},\dots,X_{n}$ lie between the hyperplanes $E^+$ and $-E^-$.  Therefore, by \Cref{lem2},
\begin{align*}
	&\PP\left( [X_1,\dots,X_d] \in \mathcal{F}_{d-1}\left( S_{n,d}^\beta \right) \Big| X_1=x_1,\dots,X_d=x_d \right) \\ &\qquad\qquad\qquad\qquad\qquad= \left(F_{1,\b+\frac{d-1}{2}}(h)-F_{1,\b+\frac{d-1}{2}}(-h)\right)^{n-d}.
\end{align*}
	All the remaining steps are exactly the same as before (except for the last step, where we cannot exploit symmetry this time). Hence,
\begin{align*}
		\EE T^{d,d-1}_{a,b} \left( S_{n,d}^\beta \right) &= 2^d C_{n,d}^{\b,b} \int\limits_{0}^{1} h^a  \left(1-h^2\right)^{d\b + \frac{d-1}{2}(d+b+1)}\\
		&\qquad\qquad\times \left(F_{1,\b+\frac{d-1}{2}}(h)-F_{1,\b+\frac{d-1}{2}}(-h)\right)^{n-d} \dint h,
\end{align*}
where
$
C_{n,d}^{\b,b}
$
is the same as in~\eqref{eq:def_C_n_d}.

The derivation of the result for the polytope $Q_{n,d}^\b= [0, X_1,\ldots,X_n]$ needs a case distinction. Namely, we need to distinguish between facets that contain $0$ as a vertex and facets which do not. Furthermore, facets containing $0$ only contribute to $\EE T_{a,b}^{d,d-1}\left( Q_{n,d}^\b \right)$ in the case that the parameter $a$ equals zero. Hence,
	\begin{align*}
		\EE& T_{a,b}^{d,d-1}\left( Q_{n,d}^\b \right) \\
		&= \EE \Bigg( \ind_{\{a = 0\}} \Bigg( \sum_{ 1\leq i_1<\dots<i_{d-1} \leq n } \ind\left( [0,X_{i_1},\dots,X_{i_{d-1}}] \in \mathcal{F}_{d-1}\left( Q_{n,d}^\beta \right) \right) \Delta^b_{d-1}\left( 0,X_{i_1},\dots,X_{i_{d-1}} \right) \Bigg) \\
		&\qquad + \sum_{1\leq i_1<\dots<i_d \leq n} \ind\left( [X_{i_1},\dots,X_{i_d}] \in \mathcal{F}_{d-1}\left( Q_{n,d}^\beta \right) \right) \eta^a\left([X_{i_1},\dots,X_{i_d}]\right) \Delta^b_{d-1}\left( X_{i_1},\dots,X_{i_d} \right) \Bigg) \\
		&=\ind_{\{a=0\}} \binom{n}{d-1} \EE \left( \ind\left( [0,X_1,\dots,X_{d-1}] \in \mathcal{F}_{d-1}\left( Q_{n,d}^\beta \right) \right) \Delta^b_{d-1}\left( 0,X_1,\dots,X_{d-1} \right) \right) \\
		&\qquad + \binom{n}{d} \EE\left( \ind\left( [X_1,\dots,X_d] \in \mathcal{F}_{d-1}\left( Q_{n,d}^\beta \right) \right) \eta^a\left([X_1,\dots,X_d]\right) \Delta^b_{d-1}\left( X_1,\dots,X_d \right)  \right) \\
		&=\ind_{\{a=0\}} \binom{n}{d-1}  \int\limits_{\left(\RR^d\right)^{d-1}} \PP\left( [0,x_1,\dots,x_{d-1}] \in \mathcal{F}_{d-1}\left(Q_{n,d}^\b\right) \Big| X_1=x_1,\dots,X_{d-1}=x_{d-1} \right) \\
		&\qquad\qquad\times \Delta_{d-1}^{b}(0,x_1,\dots,x_{d-1}) \left( \prod_{i=1}^{d-1} f_{d,\b}(x_i) \right) \lambda_d^{d-1}\left(\dint (x_1,\dots,x_{d-1})\right) \\
		&\qquad + \binom{n}{d} \int\limits_{\left(\RR^d\right)^d} \PP\left( [x_1,\dots,x_d] \in \mathcal{F}_{d-1}\left(Q_{n,d}^\b\right) \Big| X_1=x_1,\dots,X_d=x_d \right) \eta^a\left([x_1,\dots,x_d]\right) \\
		&\qquad\qquad\times \Delta_{d-1}^{b}(x_1,\dots,x_d) \left( \prod_{i=1}^{d} f_{d,\b}(x_i) \right)  \lambda_d^d\left(\dint (x_1,\dots,x_d)\right).
	\end{align*}
Now observe that if $X_1=x_1,\ldots, X_{d-1}=x_{d-1}$, then $[0,x_1,\dots,x_{d-1}]$ is a face of $Q_{n,d}^\b$ if and only if the points $X_{d}, \ldots,X_n$ are on the same side of the hyperplane passing through $0,x_1,\dots,x_{d-1}$.  It immediately follows that
	\[
		\PP\left( [0,x_1,\dots,x_{d-1}] \in \mathcal{F}_{d-1}\left(Q_{n,d}^\b\right) \Big| X_1=x_1,\dots,X_{d-1}=x_{d-1} \right) = 2 \cdot 2^{-(n-d+1)} = 2^{-(n-d)}.
	\]	
	Furthermore, by denoting $E=\mathrm{aff}(X_1,\dots,X_d) \in A(d,d-1)$, we immediately see that the probability $\PP\left( [X_1,\dots,X_d] \in \mathcal{F}_{d-1}\left(Q_{n,d}^\b\right) \Big| X_1=x_1,\dots,X_d=x_d \right)$ is the same as the probability that all points $\pi_{E^\perp}(X_1),\dots,\pi_{E^\perp}(X_d)$ lie on the same side of $\pi_{E^\perp}(E)$ on which $0$ lies. By \Cref{lem2}, we therefore have
	\[
		\PP\left( [X_1,\dots,X_d] \in \mathcal{F}_{d-1}\left(Q_{n,d}^\b\right) \Big| X_1=x_1,\dots,X_d=x_d \right) = 2 F_{1,\b + \frac{d-1}{2}}(h)
	\]
	for $h \in [0,1]$. Using these observations, Equation \eqref{e7}, the affine Blaschke-Petkantschin formula, \Cref{thm:ABP}, and exploiting the rotational symmetry of the density, we get
	\begin{align*}
		\EE& T_{a,b}^{d,d-1}\left( Q_{n,d}^\b \right) =\ind_{\{a=0\}} \binom{n}{d-1} \frac{d \k_d}{2^{n-d} ((d-1)!)^b} \left(\frac{c_{d,\b}}{c_{d-1,\b}}\right)^{d-1} \\
		&\qquad\qquad \times \int\limits_{G(d,d-1)} \int\limits_{L^{d-1}} \nabla_{d-1}^{b+1}(x_1,\dots,x_{d-1}) \left( \prod_{i=1}^{d-1} f_{d-1,\b}(x_i) \right) \lambda_L^{d-1}\left( \dint (x_1,\dots,x_{d-1}) \right) \nu_{d-1}(\dint L) \\
		&\qquad + \binom{n}{d} \frac{d! \k_d}{2} \int\limits_{A(d,d-1)} \int\limits_{E^d} \PP\left( [X_1,\dots,X_d] \in \mathcal{F}_{d-1}\left(Q_{n,d}^\b\right) \Big| X_1=x_1,\dots,X_d=x_d \right) \\
		&\qquad\qquad \times \eta^a\left([x_1,\dots,x_d]\right) \Delta_{d-1}^{b+1}(x_1,\dots,x_d) \left( \prod_{i=1}^{d} f_{d,\b}(x_i) \right)  \lambda_d^E\left(\dint (x_1,\dots,x_d)\right) \mu_{d-1}(\dint E) \\
		&= \ind_{\{a=0\}} \binom{n}{d-1} \frac{d \k_d \EE_\b\left( \nabla_{d-1}^{b+1} \right)}{2^{n-d}((d-1)!)^b} \left(\frac{c_{d,\b}}{c_{d-1,\b}}\right)^{d-1}  \\
		&\qquad + \binom{n}{d} \frac{d! \k_d}{2} \int\limits_{0}^{1} h^a F_{1,\b+\frac{d-1}{2}}(h)^{n-d} \int\limits_{E^d} \Delta_{d-1}^{b+1}(x_1,\dots,x_d) \left( \prod_{i=1}^{d} f_{d,\b}(x_i) \right)  \lambda_d^E\left(\dint (x_1,\dots,x_d)\right) \dint h \\
		&= D_{n,d}^{\b,a,b} + C_{n,d}^{\b,b} \int\limits_{0}^{1} h^a f_{1,d\b + \frac{d-1}{2}(d+b+1)}(h) F_{1,\b+\frac{d-1}{2}}(h)^{n-d} \,\dint h,
	\end{align*}
	for all $a,b\in\RR$. For the last equation we followed again along the lines of the proof for the polytope $P_{n,d}^\b$.
	
	One can do the analogous computations for the beta$^\prime$-distribution. In this case one has to pay attention to the different range of integration, probability contents provided in \Cref{lem2} and transformation provided in \Cref{lem3}.\hfill $\Box$

	\begin{proof}[Proof of \Cref{lem:Miles}]
		Let $X_1,\dots,X_d$ be i.i.d.\ random points distributed according to a beta-distri\-bution on $\BB^d$ with parameter $\beta>-1$. From \eqref{e7} and the well known base-times-height-formula for the volume of simplices we get
		\begin{align*}
		\EE_\b\left( \nabla_d^\k(X_1,\dots,X_d) \right) &= (d!)^\k \EE_\b\left( \Delta_d^\k(0,X_1,\dots,X_d) \right) \\
		&= (d!)^\k \EE_\b\left( d^{-\k} \mathrm{dist}^\k(0,\mathrm{aff}(X_1,\dots,X_d)) \Delta_{d-1}^\k(X_1,\dots,X_d) \right)\\
		&=((d-1)!)^\k \EE_\b\left( \mathrm{dist}^\k(0,\mathrm{aff}(X_1,\dots,X_d)) \Delta_{d-1}^\k(X_1,\dots,X_d) \right).
		\end{align*}
		Rewriting this as an integral over $(\RR^d)^d$ and applying the affine Blaschke-Petkantschin formula yields
		\begin{align*}
			&\EE_\b\left( \nabla_d^\k(X_1,\dots,X_d) \right) \\
			&= ((d-1)!)^\k \int\limits_{(\RR^d)^d} \mathrm{dist}^\k(0,\mathrm{aff}(x_1,\dots,x_d)) \Delta_{d-1}^\k(x_1,\dots,x_d) \left( \prod_{i=1}^{d} f_{d,\b}(x_i) \right) \lambda_d^d(\dint (x_1,\dots,x_d)) \\
			&=b_{d,d-1} ((d-1)!)^{\k+1} \int\limits_{\SS^{d-1}} \int\limits_0^1 \int\limits_{E^d} h^\k \Delta_{d-1}^{\k+1}(x_1,\dots,x_d) \left( \prod_{i=1}^{d} f_{d,\b}(x_i) \right) \lambda_E^d(\dint (x_1,\dots,x_d)) \dint h~\sigma_{d-1}(\dint u) \\
			&=2b_{d,d-1} ((d-1)!)^{\k+1} \left( \frac{c_{d,\b}}{c_{d-1,\b}} \right)^d \EE_\b\left( \Delta_{d-1}^{\k+1}(X_1,\dots,X_d) \right) \int\limits_0^1 h^\k (1-h^2)^{d\b+\frac{d-1}{2}(d+\k+1)} \dint h \\
			&=b_{d,d-1} ((d-1)!)^{\k+1} \left( \frac{c_{d,\b}}{c_{d-1,\b}} \right)^d \EE_\b\left( \Delta_{d-1}^{\k+1}(X_1,\dots,X_d) \right) B\left(\frac{\k+1}{2},d\b+\frac{d-1}{2}(d+\k+1)+1\right),
		\end{align*}
		where the second to last equality follows from Lemma~\ref{lem3}. Here $B(\cdot,\cdot)$ stands for the beta function. Thus, rearranging and adjusting for the correct indices gives
		\begin{align*}
			\EE_\b\left( \Delta_{d}^{\k}(X_0,\dots,X_d) \right) =  \left( \frac{c_{d,\b}}{c_{d+1,\b}} \right)^{d+1} \frac{\EE_\b\left( \nabla_{d+1}^{\k-1}(X_0,\dots,X_d) \right)}{b_{d+1,d}(d!)^\k B\left( \frac{k}{2}, (d+1)\b+\frac{d}{2}(d+\k+1)+1 \right)},
		\end{align*}
		from which the claim follows by working out the constants. One sees from the last equality that what we have shown is only true for $\k>0$. However, since this is true for all $\k \in (0,\infty)$, it follows with a similar argumentation as in the proof of \Cref{t2} that, by analytic continuation, this also holds for all $\k>-1$. The corresponding result for the beta$^\prime$-distribution can be shown analogously.
	\end{proof}

\section{Expected volumes and intrinsic volumes: Proof of Theorem~\ref{t2}}\label{sec:proof_exp_volumes}

\begin{proof}[Proof of \Cref{t2}]
	We start by investigating the case of a beta-distribution with parameter $\b>-1$. Let first $\beta>-\frac 12$.
 	For an arbitrary linear hyperplane $L \in G(d+1,d)$, \Cref{lem1} implies
	\begin{equation}\label{eq:proj_P_n_beta}
		P_{n,d+1}^{\beta-\frac 12} |L \overset{d}{=} P_{n,d}^{\beta},
	\end{equation}
	where $\overset{d}{=}$ indicated equality in distribution.
By the Kubota formula stated in \Cref{thm:Kub}, we have
$$
V_d(P_{n,d+1}^{\beta-\frac 12}) = (d+1) \frac{\k_{d+1}}{2 \k_d} \int\limits_{G(d+1,d)} \Vol_d(P_{n,d+1}^{\beta-\frac 12} |L) \,\nu_k(\dint L).
$$
Taking the expectation, using Fubini's theorem to change the order of integration, and applying~\eqref{eq:proj_P_n_beta}, we obtain
\begin{align*}
\EE V_d(P_{n,d+1}^{\beta-\frac 12})
&= (d+1) \frac{\k_{d+1}}{2 \k_d} \int\limits_{G(d+1,d)} \EE \Vol_d(P_{n,d+1}^{\beta-\frac 12} |L)\, \nu_d(\dint L)\\
&=
(d+1) \frac{\k_{d+1}}{2\k_d} \int\limits_{G(d+1,d)} \EE \Vol_d(P_{n,d}^{\beta}) \,\nu_d(\dint L)\\
&=
(d+1) \frac{\k_{d+1}}{2\k_d} \EE\Vol_d(P_{n,d}^{\beta}).
\end{align*}
 Since the $d$th intrinsic volume $V_{d}$ of a $(d+1)$-dimensional polytope is half its surface area, we can write the above in terms of the $T$-functional with $a=0$ and $b=1$ as follows:
\begin{equation}\label{e2}
\EE \Vol_d(P_{n,d}^{\beta})
=
\frac{2\k_d}{(d+1)\k_{d+1}} \EE V_d(P_{n,d+1}^{\beta-\frac 12})
=
\frac{\k_d}{(d+1)\k_{d+1}} \EE T_{0,1}^{d+1,d}(P_{n,d+1}^{\beta-\frac 12}).
\end{equation}
Using Theorem \Cref{t1}, we obtain
\begin{equation}\label{eq:E_Vol_d_P_n_beta}
\EE \Vol_d(P_{n,d}^{\beta})
=
A_{n,d}^\b \int\limits_{-1}^{1}  \left(1-h^2\right)^{(d+1)\left(\b-\frac 12\right) + \frac{d}{2}(d+3)} F_{1,\b+\frac{d-1}{2}}(h)^{n-d-1} \,\dint h,
\end{equation}
where
$$
A_{n,d}^\b = \frac{\k_d}{(d+1)\k_{d+1}}
C_{n,d+1}^{\b-\frac 12,1}
=
\frac {(d+1) \kappa_d}{2^d \pi^{\frac{d+1}{2}}} \binom {n}{d+1} \left(\beta + \frac {d+1}2\right)  \left(\frac{\Gamma\left(\frac{d+2}{2} + \b \right)}{\Gamma\left(\frac {d+3} 2 + \b\right)} \right)^{d+1}.
$$
In order to derive the last formula we used elementary transformations involving  \Cref{lem:Miles} and the Legendre duplication formula for the gamma function. So far, we established formula~\eqref{eq:E_Vol_d_P_n_beta} for $\b > -\frac 12$ only because the proof was based on representation~\eqref{eq:proj_P_n_beta}. In order to prove that~\eqref{eq:E_Vol_d_P_n_beta} holds in the full range $\b > -1$, we argue by analytic continuation. First of all, the function $\beta \mapsto \EE \Vol_d(P_{n,d}^{\beta})$ is real analytic in $\b>-1$ as one can see from the representation
$$
\EE \Vol_d(P_{n,d}^{\beta}) = \int\limits_{(\BB^d)^n} \Vol_{d}([x_1,\ldots,x_n]) \left(\prod_{i=1}^n f_{d,\beta}(x_i)\right) \lambda_d^n\left( \dint \left( x_1,\dots,x_n \right)\right).
$$
Secondly, the function on the right-hand side of~\eqref{eq:E_Vol_d_P_n_beta} is also real analytic in $\b>-1$. Since these functions coincide for $\b>-\frac 12$, they must coincide in the full range $\b>-1$.

			
Similarly, we obtain in the case of a beta$^\prime$-distribution with parameter $\b$ the intrinsic volume $V_d$ of $\tilde{P}_{n,d}^{\beta}$ as
	\begin{equation}\label{e5}
		\EE V_d\left(\tilde{P}_{n,d}^{\beta}\right) = \frac{\k_d}{(d+1)\k_{d+1}} \EE T^{d+1,d}_{0,1} \left( \tilde{P}_{n,d+1}^{\beta+\frac{1}{2}} \right) \mbox{,}
	\end{equation}
	from which we get
	\begin{equation}\label{e6}
	\EE V_d\left(\tilde{P}_{n,d}^{\beta}\right) = \tilde{A}_{n,d}^\b \int\limits_{-\infty}^{\infty}  \left(1+h^2\right)^{-(d+1)\left(\b+\frac 12\right) + \frac{d}{2}(d+3)} \tilde{F}_{1,\b-\frac{d-1}{2}}(h)^{n-d-1} \,\dint h \mbox{,}
	\end{equation}
	for all $k=1,\dots,d$, with
	$$
	\tilde A_{n,d}^\b= \frac{\k_d}{(d+1)\k_{d+1}}
	\tilde{C}_{n,d+1}^{\b+\frac 12,1}
	=
	\frac {(d+1) \kappa_d}{2^d \pi^{\frac{d+1}{2}}} \binom {n}{d+1} \left(\beta - \frac {d+1}2\right)  \left(\frac{\Gamma\left(\b - \frac{d+1}{2} \right)}{\Gamma\left(\b - \frac {d} 2 \right)} \right)^{d+1}.
	$$
	Inserting the expressions from \Cref{t1} into \eqref{e2}--\eqref{e6} yields the statement of the theorem for $P_{n,d}^\b$ and $\tilde{P}_{n,d}^\b$. To obtain the results for $S_{n,d}^\b$, $Q_{n,d}^\b$, $\tilde{S}_{n,d}^\b$ and $\tilde{Q}_{n,d}^\b$ we only need to replace the corresponding constants and indices from \Cref{t1}, with the ones obtained here.
\end{proof}

\begin{proof}[Proof of \Cref{prop:expected_intrinsic}]
The idea is to represent the expected intrinsic volume as the expected volume of the random projection by means of Kubota's formula.
For every linear subspace $L\in G(d,k)$, \Cref{lem1} yields the
representation
$$
P_{n,d}^{\beta} |L  \overset{d}{=} P_{n,k}^{\beta + \frac{d-k}{2}}.
$$
Using Kubota's formula, see \Cref{thm:Kub}, in conjunction with Fubini's theorem and the above representation, we get
\begin{align*}
\EE V_k(P_{n,d}^{\beta}) &=
\binom{d}{k} \frac{\k_d}{\k_k \k_{d-k}} \int\limits_{G(d,k)} \EE  \Vol_k\left(P_{n,d}^\beta|L\right) \nu_k(\dint L)
\\
&=\binom{d}{k} \frac{\k_d}{\k_k \k_{d-k}} \int\limits_{G(d,k)} \EE \Vol_k\left( P_{n,k}^{\beta + \frac{d-k}{2}}\right) \nu_k(\dint L)\\
&=
\binom{d}{k} \frac{\k_d}{\k_k \k_{d-k}} \EE \Vol_k\left( P_{n,k}^{\beta + \frac{d-k}{2}}\right).
\end{align*}
Analogously, by \Cref{lem1}, i.e., the representation
$$
\tilde{P}_{n,d}^{\beta} |L  \overset{d}{=} \tilde{P}_{n,k}^{\beta - \frac{d-k}{2}}
$$
for every linear subspace $L \in G(d,k)$, and the same arguments as before, we have
$$
\EE V_k(\tilde{P}_{n,d}^{\beta}) = \binom{d}{k} \frac{\k_d}{\k_k \k_{d-k}} \EE \Vol_k\left( \tilde{P}_{n,k}^{\beta - \frac{d-k}{2}}\right).
$$
The corresponding results for $S_{n,d}^\b$, $Q_{n,d}^\b$, $\tilde{S}_{n,d}^\b$ and $\tilde{Q}_{n,d}^\b$ hold with the same argumentation.
\end{proof}

\begin{proof}[Proof of Proposition~\ref{prop:uniform_sphere}]
Let $\mu_{d,\b}$ be the beta distribution on $\BB^d$ with parameter $\b > -1$. Also, let $\mu_{d}$ be the uniform distribution on the sphere $\SS^{d-1}$. We show that $\mu_{d,\b}$ weakly converges to $\mu_d$, as $\beta \downarrow -1$. Let $(\b_k)_{k\in\NN}$ be any sequence converging to $-1$ from above. Since the set of probability measures on the unit ball $\BB^d$ is compact in the weak topology, there is a weak accumulation point $\nu$ of the sequence $(\mu_{d,\b_k})_{k\in\NN}$. We need to show that $\nu= \mu_d$. First of all, since $\lim_{\b \downarrow 0} \Gamma(\beta + 1) = +\infty$, it is clear that the density $f_{d,\b}(x)$ converges to $0$ uniformly in $x\in K$, for every compact subset $K$ of the open unit ball. It follows that $\mu_{d, \b_k}(K)$ converges to $0$ as $k\to\infty$, hence the probability measure $\nu$ is concentrated on the sphere $\SS^{d-1}$. Since the measures $\mu_{d, \b_k}$ are invariant under arbitrary orthogonal transformations of $\RR^d$, the same is true for the weak limit $\nu$. It follows that $\nu$ is a rotationally invariant probability measure on the sphere $\SS^{d-1}$, hence $\nu = \mu_d$.

We showed that $\mu_{d,\beta}\to \mu_d$ weakly, as $\beta \downarrow -1$. Hence, we have the weak convergence of product measures: $\mu_{d,\b}^{\otimes n} \to \mu_d^{\otimes n}$, as $\beta \downarrow -1$, for each $n\in\NN$. Since the functional $(x_1,\ldots,x_n) \mapsto \Vol_d([x_1,\ldots,x_n])$ is continuous on $\BB^d\times \dots \times \BB^d$, the continuous mapping theorem implies that the random variable $\Vol_d (P^{\b}_{n,d})$ converges in distribution to $\Vol_d (P_{n,d})$, as $\b \downarrow -1$. Since these random variables are bounded by $\kappa_d$, we obtain
$$
\lim_{\b\downarrow -1} \EE \Vol_d (P^{\b}_{n,d}) = \EE \Vol_d (P_{n,d}).
$$
It remains to observe that the expression for $\EE \Vol_d (P^{\b}_{n,d})$ given in \Cref{t2} converges to the expression for $\EE \Vol_d (P_{n,d})$ given in \Cref{prop:uniform_sphere}. This is a consequence of the dominated convergence theorem, the majorant being the function $(1-h^2)^{(d^2-3)/2}$. For symmetric convex hulls, the proof is similar.
\end{proof}

\appendix

\section{Particular values for small dimensions and a small number of points}

We collect some particular mean values for the random polytopes $P_{n,d}^\beta$ and $S_{n,d}^\beta$ for $d=2$, $d=3$ and with $\beta=0$ (uniform distribution in the unit ball) and $\beta=-1$ (uniform distribution on the sphere).

\begin{table}[H]
\begin{center}
\begin{tabular}{|c||c|c|c|c|}
\hline
\parbox[0pt][2em][c]{0cm}{}& $\EE\Vol_2(P_{n,2}^0)$ & $\EE\Vol_2(S_{n,2}^0)$ & $\EE S_1(P_{n,2}^0)$ & $\EE S_1(S_{n,2}^0)$ \\
\hline
\hline
\parbox[0pt][2em][c]{0cm}{}$n=3$ & ${35\over48\pi}$ & $35\over 12\pi$ & $128\over 15\pi$ & ${512\over 15\pi}-{104704\over 1575\pi^2}$ \\
\hline
\parbox[0pt][2em][c]{0cm}{}$n=4$ & ${35\over 24\pi}$ & ${35\over 6\pi}-{2816\over 135\pi^3}$ & ${256\over 15\pi}-{11075584\over 165375\pi^3}$ & ${1024\over 15\pi}-{88604672\over 165375\pi^3}$ \\
\hline
\parbox[0pt][2em][c]{0cm}{}$n=5$ & ${175\over 72\pi}-{23023\over 6912\pi^3}$ & ${175\over 18\pi}-{23023\over 432\pi^3}$ & ${256\over 9\pi}-{5537792\over 33075\pi^3}$ & ${1024\over 9\pi}-{88604672\over 33075\pi^3}+{204130238464\over 38201625\pi^4}$\\
\hline
\end{tabular}
\caption{Mean area and perimeter length of a random polygon and a symmetric random polygon generated by $n$ points uniformly distributed in the unit disc.}
\end{center}
\end{table}

\vspace{-0.5cm}

\begin{table}[H]
\begin{center}
\begin{tabular}{|c||c|c|c|c|}
\hline
\parbox[0pt][2em][c]{0cm}{} & $\EE\Vol_2(P_{n,2})$ & $\EE\Vol_2(S_{n,2})$ & $\EE S_1(P_{n,2})$ & $\EE S_1(S_{n,2})$\\
\hline
\hline
\parbox[0pt][2em][c]{0cm}{} $n=3$ & $3\over 2\pi$ & $6\over\pi$ & $12\over \pi$ & ${48\over \pi}-{96\over \pi^2}$\\
\hline
\parbox[0pt][2em][c]{0cm}{} $n=4$ & $3\over\pi$ & ${12\over \pi}-{48\over\pi^3}$ & ${24\over \pi}-{96\over \pi^3}$ & ${96\over \pi}-{768\over \pi^3}$\\
\hline
\parbox[0pt][2em][c]{0cm}{} $n=5$ & ${5\over\pi}-{15\over 2\pi^3}$ & ${20\over \pi}-{120\over \pi^3}$ & ${40\over \pi}-{240\over \pi^3}$ & ${160\over \pi}-{3840\over \pi^3}+{7680\over\pi^4}$\\
\hline
\end{tabular}
\caption{Mean area and perimeter length of a random polygon and a symmetric random polygon generated by $n$ points uniformly distributed on the unit circle.}
\end{center}
\end{table}

\vspace{-0.5cm}

\begin{table}[H]
\begin{center}
\begin{tabular}{|c||c|c|c|c|c|c|}
\hline
\parbox[0pt][2em][c]{0cm}{} & $\EE V_3(P_{n,3}^0)$ & $\EE V_3(S_{n,3}^0)$ & $\EE S_2(P_{n,3}^0)$ & $\EE S_2(S_{n,3}^0)$ & $\EE \mw_3(P_{n,3}^0)$ & $\EE \mw_3(S_{n,3}^0)$\\
\hline
\hline
\parbox[0pt][2em][c]{0cm}{} $n=4$ & $12\pi\over 715$ & $96\pi\over 715$ & $36\pi\over 77$ & $135\pi \over 112$ & $666\over 715$ & $6408\over 5005$\\
\hline
\parbox[0pt][2em][c]{0cm}{} $n=5$ & $6\pi \over 143$ & $195\pi\over 1024$ & $11448\pi \over 17017$ & $24048\pi \over 17017 $ & $1044\over 1001$ & $2421\over 1792$\\
\hline
\parbox[0pt][2em][c]{0cm}{} $n=6$ & $2070\pi \over 29393$ & $77472\pi \over 323323$ & $1314\pi\over 1547$ & $5661\pi\over 3584$ & $33102\over 29393$ & $454140\over 323323$\\
\hline
\end{tabular}
\caption{Mean volume, surface area and mean width of a random polytope and a symmetric random polytope generated by $n$ points uniformly distributed in the $3$-dimensional unit ball.}
\end{center}
\end{table}

\vspace{-0.5cm}

\begin{table}[H]
\begin{center}
\begin{tabular}{|c||c|c|c|c|c|c|}
\hline
\parbox[0pt][2em][c]{0cm}{} & $\EE V_3(P_{n,3})$ & $\EE V_3(S_{n,3})$ & $\EE S_2(P_{n,3})$ & $\EE S_2(S_{n,3})$ & $\EE \mw_3(P_{n,3})$ & $\EE \mw_3(S_{n,3})$\\
\hline
\hline
\parbox[0pt][2em][c]{0cm}{} $n=4$ & $4\pi\over 105$ & $32\pi\over 105$ & $4\pi\over 5$ & $2\pi$ & $6\over 5$ & $8\over 5$\\
\hline
\parbox[0pt][2em][c]{0cm}{} $n=5$ & $2\pi \over 21$ & $5\pi \over 12$ & $8\pi \over 7$ & $16\pi \over 7$ & $4\over 3$ & $5\over 3$\\
\hline
\parbox[0pt][2em][c]{0cm}{} $n=6$ & $10\pi \over 63$ & $32\pi \over 63$ & $10\pi\over 7$ & $5\pi \over 2$ & $10\over 7$ & $12\over 7$\\
\hline
\end{tabular}
\caption{Mean volume, surface area and mean width of a random polytope and a symmetric random polytope generated by $n$ points uniformly distributed on the $3$-dimensional unit sphere.}
\end{center}
\end{table}

\vspace{-0.5cm}

\begin{table}[H]
\begin{center}
\begin{tabular}{|c||c|c||c|c||c|c|}
\hline
\parbox[0pt][2em][c]{0cm}{} & $\EE{\bf f}_1(P_{n,2}^0)$ & $\EE{\bf f}_1(S_{n,2}^0)$ & $\EE{\bf f}_2(P_{n,3}^0)$ & $\EE{\bf f}_2(S_{n,3}^0)$ & ${\bf f}_2(P_{n,3})$ & ${\bf f}_2(S_{n,3})$\\
\hline
\hline
\parbox[0pt][2em][c]{0cm}{} $n=3$ & $3$ & $6-{32\over 3\pi^2}$ & -- & 8 & -- & 8\\
\hline
\parbox[0pt][2em][c]{0cm}{} $n=4$ & $4-{35\over 12\pi^2}$ & $8-{70\over 3\pi^2}$ & $4$ & $357\over 32$ & $4$ & $12$\\
\hline
\parbox[0pt][2em][c]{0cm}{} $n=5$ & $5 - {175\over 24\pi^2}$ & $10 -{175\over 3\pi^2}+ {5632\over 27\pi^4}$ & $840\over 143$ & $2000\over 143$ & $6$ & $16$\\
\hline
\parbox[0pt][2em][c]{0cm}{} $n=6$ & $6-{175\over 12\pi^2} + {23023\over 1152\pi^4}$ & $12-{350\over 3\pi^2}+{23023\over 36\pi^4}$ & $1090\over 143$ & $8485\over 512$ & $8$ & $20$\\
\hline
\end{tabular}
\caption{Mean number of edges and facets of a random polytope and a symmetric random polytope generated by $n$ points uniformly distributed in the unit ball. The last two columns collect the a.s.\ number of facets of a random polytope and a symmetric random polytope generated by $n$ random points uniformly distributed on the $3$-dimensional unit sphere.}
\end{center}
\end{table}

\vspace{-0.5cm}

\begin{table}[H]
\begin{center}
\begin{tabular}{|c||c|c|c|}
\hline
\parbox[0pt][2em][c]{0cm}{} & $\EE {\bf f}_1(\hat P_{n,2}^0)$ & $\EE {\bf f}_2(\hat P_{n,3}^0)$ & $\EE {\bf f}_3(\hat P_{n,4}^0)$\\
\hline
\hline
\parbox[0pt][2em][c]{0cm}{} $n=3$ & $3$ & -- & --\\
\hline
\parbox[0pt][2em][c]{0cm}{} $n=4$ & $6-{24\over \pi^2}$ & $4$ & --\\
\hline
\parbox[0pt][2em][c]{0cm}{} $n=5$ & $10-{60\over \pi^2}$ & ${20\over 3}-{10\over \pi^2}$ & $5$\\
\hline
\parbox[0pt][2em][c]{0cm}{} $n=6$ &  $15-{180\over \pi^2}+{720\over \pi^4}$ & $10-{30\over \pi^2}$ & $15-{200\over 3\pi^2}$\\
\hline
\parbox[0pt][2em][c]{0cm}{} $n=7$ &  $21-{420\over \pi^2}+{2520\over \pi^4}$ & $14-{70\over \pi^2}+{105\over \pi^4}$ & $35-{700\over 3\pi^2}$\\
\hline
\parbox[0pt][2em][c]{0cm}{} $n=8$ &  $28-{840\over \pi^2}+{10080\over \pi^4}-{40320\over \pi^6}$ & ${56\over 3}-{140\over \pi^2}+{420\over \pi^4}$ & $70-{2800\over 3\pi^2}+{101920\over 27\pi^4}$\\
\hline
\end{tabular}
\caption{The mean number of (spherical) facets of a random polytope generated by $n$ points uniformly distributed on the $2$-/$3$-/$4$-dimensional upper half-sphere.}
\end{center}
\end{table}

\subsection*{Acknowledgement}
We would like to thank Matthias Reitzner (Osnabr\"uck) for making reference \cite{BuchtaEllipsoid} available to us. We also thank the referee for his/her comments and suggestions.

DT was supported by the Deutsche Forschungsgemeinschaft (DFG) via RTG 2131 {\it High-Dimen\-sional Phenomena in Probability -- Fluctuations and Discontinuity}. ZK and CT were supported by the DFG Scientific Network {\it Cumulants, Concentration and Superconcentration}.


\vspace{1cm}

\footnotesize

\textsc{Zakhar Kabluchko:} Institut f\"ur Mathematische Stochastik, Westf\"alische Wilhelms-Universit\"at M\"unster\\
\textit{E-mail}: \texttt{zakhar.kabluchko@uni-muenster.de}

\bigskip

\textsc{Daniel Temesvari:} Fakult\"at f\"ur Mathematik, Ruhr-Universit\"at Bochum\\
\textit{E-mail}: \texttt{daniel.temesvari@rub.de}

\bigskip

\textsc{Christoph Th\"ale:} Fakult\"at f\"ur Mathematik, Ruhr-Universit\"at Bochum\\
\textit{E-mail}: \texttt{christoph.thaele@rub.de}

\end{document}